\newcommand{\numberset}[1]{\ensuremath{\mathbb{#1}}}    
\newcommand{\C}{\numberset{C}}  
\newcommand{\R}{\numberset{R}}  
\newcommand{\Z}{\numberset{Z}}  
\newcommand{\PP}{\numberset{P}}  
\newcommand{\RP}{\numberset{R}\numberset{P}}
\newcommand{\inn}[2]{ \langle {#1}, {#2} \rangle}
\newcommand{\h}{\mathbf{h}}
\newcommand{\y}{\mathbf{y}}
\newcommand{\x}{\mathbf{x}}
\newcommand{\gb}{\mathbf{g}}
\theoremstyle{definition}
\newtheorem{thm}{Theorem}[section]
\newtheorem{prop}[thm]{Proposition}
\newtheorem{lem}[thm]{Lemma}
\newtheorem{cor}[thm]{Corollary}
\newtheorem{rem}[thm]{Remark}
\newtheorem{ex}[thm]{Example}
\newtheorem{defi}[thm]{Definition}
\newtheorem{propdefi}[thm]{Proposition-Definition}
\DeclareMathOperator{\conv}{Conv} 
\DeclareMathOperator{\cone}{Cone} 
 \DeclareMathOperator{\Gl}{GL}
\DeclareMathOperator{\spn}{span} 
\DeclareMathOperator{\Hom}{Hom}
\DeclareMathOperator{\Log}{Log}
\DeclareMathOperator{\inter}{Int}
\newcommand{\mycomments}[1]{
           \ifthenelse{\boolean{mynotes}}
                      {#1}{}
           }
\newcommand{\marginlabel}[1]
{\mbox{}\marginpar[\raggedleft $\longrightarrow$ \\ \tiny\sf #1]{\raggedright $\longleftarrow$\\ \tiny\sf #1}}
\newcommand{\todo}[1]{\mycomments{\marginlabel{\tiny\sf{#1}}}}
\begin{document}

\title{Lagrangian submanifolds from tropical hypersurfaces}
\author{Diego Matessi}

\begin{abstract}
We prove that a smooth tropical hypersurface in $\R^3$ can be lifted to a smooth embedded Lagrangian submanifold in $(\C^*)^3$. This completes the proof of the result announced in the article ``Lagrangian pairs pants'' \cite{lag_pants}. The idea of the proof is to use Lagrangian pairs of pants as the main building blocks.
\end{abstract}

\maketitle


\section{Introduction}
\subsection{Main result} In the article \cite{lag_pants} we introduced a new Lagrangian submanifold of $(\C^*)^n$, which we called a Lagrangian pair of pants. It is a fundamental object in the proof of the following result, announced in the same article
\begin{thm} \label{main_thm} Given a smooth tropical hypersurface $\Xi$ in $\R^2$ or $\R^3$, there is a one parameter family of smooth Lagrangian submanifolds $\mathcal L_t$ of respectively $(\C^*)^2$ or $(\C^*)^3$ such that $\mathcal L_t$ is homeomorphic to the PL lift $\hat \Xi$ of $\Xi$ and converges to it in the Hausdorff topology as $t \rightarrow 0$. 
\end{thm}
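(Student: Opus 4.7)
\medskip

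\noindent\textbf{Proof proposal.} The plan is to build $\mathcal{L}_t$ by a local-to-global construction, using the natural stratification of the smooth tropical hypersurface $\Xi$ into vertices, edges and (in the $\R^3$ case) two-dimensional faces. For each cell of this stratification I would associate a standard Lagrangian model inside $(\C^*)^n$ scaled by a parameter $t$: near a trivalent vertex of $\Xi$ the model is a Lagrangian pair of pants from \cite{lag_pants}; along an edge it is a Lagrangian cylinder of the form (closed curve in $(\C^*)^{n-1})\times$ (segment in $\C^*$), where the closed curve is the ``waist'' of the adjacent pair of pants; and in the three-dimensional case, near a two-dimensional face of $\Xi$ one uses a product of a standard Lagrangian cylinder with a circle, while near a vertex one uses a higher-dimensional Lagrangian pair of pants. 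The projections of these pieces under the rescaled log map $\Log_t(z)=(\log|z_1|/|\log t|,\ldots,\log|z_n|/|\log t|)$ converge as $t\to 0$ to the corresponding cells of $\Xi$.

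Next I would carry out the gluing. The key point is that, thanks to the explicit construction in \cite{lag_pants}, a Lagrangian pair of pants has asymptotic cylindrical ends which coincide, on the nose or after a controlled Hamiltonian isotopy, with the cylinder model attached along the adjacent edge of $\Xi$. In a neighborhood of each overlap I would interpolate between the two local models using a cut-off in a Weinstein neighborhood: writing the local Lagrangian as the graph of an exact one-form $dH$ on a reference Lagrangian $L_0$, one interpolates the primitives $H$ by a suitable partition of unity adapted to the cell structure. Since $L_0$ is Lagrangian and $d(\chi_1 H_1 + \chi_2 H_2)$ is closed, the interpolated graph remains Lagrangian, and for $t$ small the one-form stays $C^1$-small so the result is embedded.

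The global object is then obtained by doing this gluing consistently over the whole cell complex of $\Xi$: I would order the gluings by dimension (first glue pants to cylinders along edges, then glue those to face-cylinders in $\R^3$), and at each stage check that the local model near the higher-dimensional stratum already matches the previously assembled Lagrangian in a neighborhood. The homeomorphism type of the result is the same as that of the PL lift $\hat\Xi$ because the combinatorial gluing pattern is identical to the one defining $\hat\Xi$ from its pair-of-pants decomposition. Finally, Hausdorff convergence $\mathcal{L}_t\to\hat\Xi$ follows because each local Lagrangian model has this property by construction and the gluings are supported in neighborhoods whose $\Log_t$-size shrinks to zero.

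The main obstacle I expect is the three-dimensional gluing near a vertex of $\Xi$, where three or more face-cylinders, several edge-pieces and one vertex pair-of-pants must all be matched simultaneously. Making the interpolation compatible with this nested structure requires carefully chosen cut-off regions that respect the moment-map directions transverse to each stratum, and verifying that the resulting Lagrangian remains embedded (not just immersed). A secondary technical point, subsumed by the choice of a sufficiently small $t$, is ensuring that the Hamiltonian isotopies needed to align the cylindrical ends of adjacent pairs of pants with the same edge-cylinder are small enough that the Weinstein graph description is valid throughout the overlap regions.
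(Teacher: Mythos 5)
Your outline follows the same overall strategy as the paper: stratify $\Xi$, place a rescaled Lagrangian pair of pants over each vertex, a lower-dimensional pair of pants times an interval along each edge, an essentially flat piece times a circle over each two-dimensional face, glue by interpolating primitives of exact one-forms with cut-offs, read off the homeomorphism type from the combinatorics, and get Hausdorff convergence by scaling. However, two of the steps you treat as routine are precisely where the work lies, and as stated they would fail.

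First, the ends of a Lagrangian pair of pants never coincide with the adjacent edge or face model, neither on the nose nor after a small Hamiltonian isotopy: they are only asymptotic to it, and no isotopy is used in the paper. What replaces this is the trimming machinery of Sections \ref{trimming_Lag}--\ref{main_results}: one must prove that, beyond explicit trimming parameters, the end of the pair of pants lies in the domain $\tilde U_J$ of the projection to the face and is the graph of the differential of a Legendre transform $G$ of $F$ over $\tilde E_J \times Q_{r_J}$ (Proposition \ref{faceproj}, Lemmas \ref{preimSmall} and \ref{preimSmall2}, Corollaries \ref{fibreBndl1} and \ref{fibreBndl2}); the edge model is then \emph{defined} as the interpolation $\eta G + (1-\eta)F$, so that equality with the vertex model on the overlap holds by construction (\eqref{overlap}). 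Without these quantitative estimates there is no common Weinstein chart in which your two primitives $H_1,H_2$ are both defined, so the interpolation cannot even be written down.

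Second, ``the one-form stays $C^1$-small so the result is embedded'' misses the actual difficulties: a graph of an exact one-form is automatically an embedded Lagrangian, and the interpolated forms here are not small. The real issues are (i) that the glued edge model must \emph{again} admit a Legendre-transform graph description over the face directions for the next stage of gluing; this works because the cut-off depends only on the edge coordinate, so the fibrewise Hessian is a convex combination of two negative-definite Hessians (Lemma \ref{glued_proj}), and negative-definiteness of $\hes F$ (Corollary \ref{HessFneg}) is exactly the ingredient currently known only for $n\le 2$, which is why the theorem is stated in $\R^2$ and $\R^3$; and (ii) that distinct local models are disjoint away from the prescribed overlaps, which is not a smallness statement but a control of the images of the maps $\h_e$, $\h_d$, $\h_f$ inside the system of inner polyhedra and neighborhoods of \S\ref{inner_pol}--\ref{intorni} (Lemmas \ref{gluing_size} and \ref{bound_loc_mod_face}, together with Lemma \ref{trim_new_const} for surjectivity onto the face models). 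You correctly identify the simultaneous matching near a vertex as the main obstacle, and the paper resolves it exactly there, via compatible systems of projections and the nested trimming parameters; but the isotopy-plus-smallness mechanism you propose is not a viable substitute for those estimates.
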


In the present article we complete the proof of this theorem by proving the case of hypersurfaces in $\R^3$.  The case of curves in $\R^2$, which is relatively simple, is already contained in op.cit. In the same article, we also gave examples of Lagrangian lifts of non-smooth tropical curves and sketched a proof of how to lift tropical curves in higher codimension to Lagrangian submanifolds in $(\C^*)^n$.  The piecewise linear (PL) Lagrangian lift $\hat \Xi$ of a tropical hypersurface $\Xi$ in $\R^n$ is a topological, closed $n$-dimensional submanifold of $(\C^*)^n$, Lagrangian on the smooth points, such that the following diagram commutes
\begin{equation} \label{diagr_lift}
    \begin{tikzcd}
         \hat \Xi  \arrow[d]  \arrow[r] & (\C^*)^n  \arrow[d]\\
         \Xi \arrow[r] & \R^n
    \end{tikzcd}
\end{equation}
where the vertical arrows are given by the $\Log$ map
\[ \Log: (z_1, \ldots, z_n) \mapsto (\log|z_1|, \ldots, \log|z_n|). \] 
This is similar to other piecewise linear objects associated to tropical subvarieties in the context of complex geometry, e.g. the complexified non-archimedean amoeba in \cite{mikh_pants} also called phase tropical hypersurfaces (see for instance \cite{kerr_zha_phase_trop} or \cite{nisse_sottile_nonAch_coam}). 

In \cite{lag_pants} we also gave many new constructions of Lagrangian surfaces in two dimensional toric varieties, including some monotone Lagrangian tori. Shortly after our article appeared on the arXiv, Mikhalkin released \cite{mikh_trop_to_lag} with a different proof of the result for the case of tropical curves in $\R^n$. He also gave many other interesting examples, including a proof of Givental's result on the existence of embedded Lagrangian non-orientable surfaces diffeomorphic to the sum of $2k+1$ Klein bottles, with $k \geq 1$. In addition, in the case of lifts of tropical curves in three dimensional toric varieties, he gives an interpretation of the order of the first homology group of the lift in terms of the multiplicity of the tropical curve. This should have interesting applications in the counting of special Lagrangian submanifolds and homological mirror symmetry. We also learned about the work of Mak and Ruddat who give a construction of Lagrangian submanifolds in the mirror quintic, lifting tropical curves in the boundary of the moment polytope of the ambient toric variety. They also give similar applications to the counting problem of special Lagrangian submanifolds. 

We point out that in the case of tropical curves in $\R^2$ an alternative method of proof of Theorem \ref{main_thm} is to use the hyperk\"ahler trick in $(\C^*)^2$, turning complex submanifolds to Lagrangian. This reduces the Lagrangian problem to the complex case, where one can appeal to ``tropical to complex'' correspondence results. This method was used for instance by Mikhalkin in \cite{mikh_trop_to_lag}. The same idea does not apply in the case of tropical hypersurfaces in $\R^3$. This is where our idea of introducing Lagrangian pairs of pants as main building blocks becomes essential. 

In Sections 2-4 we recall the main ideas of \cite{lag_pants}, such as the definition of Lagrangian pair of pants, of the PL lift $\hat \Xi$ and we summarize the most useful properties. Section 5 contains some technical results in preparation for the proof of Theorem \ref{main_thm} given in Section 6. 
\subsection{Examples in toric varieties and Calabi-Yau manifolds?} In the last section we discuss some (possible) generalizations and examples, extending those described in \cite{lag_pants} and \cite{mikh_trop_to_lag} in the case of tropical curves. In particular we discuss how the same tropical hypersurface can be lifted in different ways, by twisting with local sections. Moreover we give some examples of lifts of non smooth tropical hypersurfaces. Finally we discuss the problem of constructing examples in three dimensional toric varieties. Unfortunately the step from $(\C^*)^3$ to toric varieties is not as straight forward as in the case of tropical curves. A complete construction requires a more detailed analysis of the interaction of the Lagrangian lifts with the toric boundary, which we postpone to future work. Nevertheless we give some interesting candidate examples: a candidate Lagrangian monotone embedding of $S^1 \times S^2$ in $\PP^3$ (see Example \ref{monotone_ex}), which generalizes the examples in \cite{lag_pants} and a candidate non-orientable Lagrangian in $\C^3$ which generalizes Mikhalkin's construction of non-orientable Lagrangian surfaces in $\C^2$. In \S \ref{CY's} we sketch how one could use the ideas in this article to construct interesting Lagrangian submanifolds in the symplectic Calabi-Yau manifolds with singular Lagrangian torus fibrations which come from our work with Casta\~no-Bernard \cite{CB-M} and the work of Gross \cite{TMS}, \cite{GHJ}. In particular in Example \ref{sphere_quintic} we give a candidate construction of $105$ Lagrangian submanifolds (spheres?) in a symplectic manifold homeomorphic (and conjecturally symplectomorphic) to the quintic threefold in $\PP^4$.

\subsection{Notation.} \label{notation} Given a set of vectors $u_1, \ldots, u_k$ in a vectors space $V$, the cone generated by these vectors is the set
\[ \cone \{ u_1, \ldots, u_k \} = \left \{ \sum_{j=1}^{k} t_j u_j \, | \, t_j \in \R_{\geq 0} \right \}. \]
Given a subset $A$ of an affine space, we will denote the convex hull of $A$ by
\[ \conv A. \]
Given a subset $W$ of an affine space, the notation 
\[ \inter W \]
stands for the relative interior of $W$. Namely, we consider the smallest affine subspace containing $W$, then $\inter W$ will be the topological interior relative to this affine subspace. This for examples applies to faces of polyhedra or cones. 

\subsection*{Acknowledgments} 
I wish to thank Ricardo Casta\~no-Bernard, Mark Gross and Grigory Mikhalkin for useful discussions on this topic. For this project I was partially supported by the grant FIRB 2012 ``Moduli spaces and their applications'' and by the national research project ``Geometria delle variet\`a proiettive'' PRIN 2010-11. I am a member of the INdAM group GNSAGA.

\section{Lagrangian PL lifts of tropical hypersurfaces}
This section reports notions and results from \cite{lag_pants}.
\subsection{The set-up} \label{setup} Let $M \cong \Z^{n+1}$ be a lattice of rank $n+1$ and let $N = \Hom(M, \Z)$ be its dual lattice. We define $M_{\R} := M \otimes_{\Z} \R$ and similarly $N_{\R}$. Since $M_{\R}$ is the dual of $N_{\R}$ the space $M_{\R} \oplus N_{\R}$ has a natural symplectic form
\[ \omega( m \oplus n, m' \oplus n') = \inn{m}{n'} - \inn{m'}{n} \]
where $\inn{\cdot}{\cdot}$ is the duality pairing. We will consider the $n+1$-dimensional torus
\[ T = N_{\R} / N \]
whose cotangent bundle is 
\[ T^*T = M_{\R} \times N_{\R}/N. \]
Then $\omega$ is the standard symplectic form on $T^*T$. The projection 
\[f: T^{*}T \rightarrow M_{\R} \]
 is a Lagrangian torus fibration. 

We will often identify $M_{\R}$ with $\R^{n+1}$ by choosing a basis $\{u_1, \ldots, u_{n+1} \}$ of $M$ and denote the corresponding coordinates in $M_{\R}$ by $x = (x_1, \ldots, x_{n+1})$. Similarly we also identify $N_{\R}$ with $\R^{n+1}$ by choosing a basis $\{ u_1^*, \ldots, u_{n+1}^* \}$ of $N_{\R}$ such that 
\begin{equation} \label{dual_basis}
   \inn{u^*_{j}}{u_k} = \frac{1}{\pi} \delta_{jk} 
\end{equation}
and denote the corresponding coordinates by $y = (y_1, \ldots, y_{n+1})$. In particular $N$ is identified with $\pi \Z^{n+1}$ and thus 
\begin{equation} \label{toro}
T = \R^{n+1}/ \pi \Z^{n+1}.
\end{equation}
We denote by $[y]$ the element of $T$ represented by $y$. 
The symplectic form $\omega$ becomes 
\begin{equation} \label{st_sympl}
       \omega = \frac{1}{\pi}\sum_{i=1}^{n+1} dx_i \wedge dy_i.
\end{equation}
We also have that $T^*T$ is symplectomorphic to $(\C^*)^n$ with the symplectic form 
\[      \omega = \frac{i}{4\pi}\sum_{k=1}^{n+1} \frac {dz_k \wedge d \bar z_k}{|z_k|^2}, \]
via the symplectomorphism $(x_k,y_k) \mapsto z_k = e^{x_k+i2y_k}$.



\subsection{Tropical hypersurfaces} \label{trop_hyper}  A subset $P \subset N_{\R}$ is a convex lattice polytope if it is the convex hull of a finite set of points in $N$. A subdivision of $P$ in smaller lattice polytopes $P_1, \ldots, P_k$ is called {\it regular} if there exists a convex piecewise affine function $\nu: P \rightarrow \R$, such that $\nu$ is integral, i.e. $\nu(P \cap N) \subset \Z$, and the $P_i$'s coincide with the domains of affiness of $\nu$. With a slight abuse of notation the pair $(P, \nu)$ will also denote the set of simplices in the decomposition, i.e. all the $P_k$'s and all of their faces. Therefore we will write 
$e \in (P, \nu)$ to indicate that $e$ is a simplex in the decomposition. Inclusion of faces will be denoted by 
\[ f \preceq e. \]
We say that the subdivision is {\it unimodal} if all the $P_i$'s are elementary simplices.  

The discrete Legendre transform of $\nu$ is the function $\check{\nu}: M_{\R} \rightarrow \R$:
\begin{equation} \label{cknu}
\check{\nu}(m) = \min \{ \inn{v}{m} + \nu(v), \ v \in P \cap N\}. 
\end{equation}

Also $\check \nu$ gives a decomposition of $M_{\R}$ in the convex polyhedra given by its domains of affiness. As above, the pair  $(M_{\R}, \check \nu)$ will also denote the set of all polyhedra in the subdivision and their faces. 

\begin{defi} The {\it tropical hypersurface} associated to the pair $(P, \nu)$ is the subset $\Xi \subset M_{\R}$ given by the points where $\check{\nu}$ fails to be smooth. We say that $\Xi$ is {\it smooth} if the subdivision of $P$ induced by $\nu$ is unimodal.
\end{defi}

\begin{ex} \label{ex_strop_hyp} Let $M= \Z^{n+1}$ and let $P$ be the standard simplex (i.e. the convex hull of the origin and the canonical basis of $\Z^{n+1}$). Let $\nu$ be the zero function. Then 
\[ \check{\nu} = \min \{0, x_1, \ldots, x_{n+1} \}. \] 
The corresponding tropical hypersurface is called the {\it standard tropical hyperplane} which we denote by $\Gamma$. 
\end{ex}

The subdivision $(M_{\R}, \check \nu)$ is dual to the subdivision $(P, \nu)$. In particular there is an inclusion reversing bijection between faces of $(P, \nu)$ and faces of $(M_{\R}, \check \nu)$, which we denote by 
\[ e \mapsto \check e. \]
We have that $\dim \check e = n+1- \dim e$.

\subsection{The tropical hyperplane} \label{stTrop} Let $\{u_1, \ldots, u_{n+1} \}$ be a basis of $M$ inducing coordinates $x=(x_1, \ldots, x_{n+1})$ on $M_{\R}$. Let $\Gamma$ be the standard tropical hyperplane (see Example \ref{ex_strop_hyp}). It can be described as the union of the following cones. Let 
\begin{equation} \label{u0}
     u_0 = - \sum_{j=1}^{n+1} u_j.
\end{equation}
Given a proper subset $J \subsetneq \{0, \ldots, n+1 \}$ let $|J|$ be its cardinality and let 
\[ \Gamma_{J} = \cone \{ u_j, \, j \in J \}. \] 
For convenience let us also define
\[ \Gamma_{\emptyset} = \{ 0 \},\]
which is the vertex of $\Gamma$. We have that
\[ \Gamma = \bigcup_{0 \leq |J| \leq n} \Gamma_J. \]


\subsection{Lagrangian coamoebas} \label{stCoam}
Let $\{ u_1^*, \ldots, u_{n+1}^* \}$ be the basis of $N_{\R}$ satisfying \eqref{dual_basis}. Thus the torus $T$ is as in \eqref{toro}. 
Consider the points
\[ p_0 = 0 \quad \text{and} \quad p_k = \frac{\pi}{2}u^*_k, \  \ (k=1, \ldots, n+1). \]  
Denote by $C^+$ the set of points $[y] \in T$ which are represented either by a vertex or by an interior point of the $(n+1)$-dimensional simplex with vertices the points $p_0, \ldots, p_{n+1}$. Let $C^-$ be the image of $C^+$ with respect to the involution $[y] \mapsto [-y]$. 
The {\it (standard) $(n+1)$-dimensional Lagrangian coamoeba} is the set $C = C^+ \cup C^-$ (see Figure \ref{co_am}). Notice that this definition makes sense also when $n=0$, in which case $C = \R/ \pi \Z$. The points $[p_0], \ldots, [p_{n+1}]$ are called the {\it vertices} of $C$. 

For any subset $J  \subsetneq \{0, \ldots, n+1\}$, denote by $E^+_J$ the set of points $[y] \in T$ which are represented either by a vertex or by a point in the relative interior of the $(n+1-|J|)$-dimensional simplex with vertices the points  $\{p_k \}_{k \notin J}$. We let $E_J^-$ be the image of $E^+_J$ via the involution $[y] \mapsto [-y]$. 
We define the {\it $J$-th face} of $C$ to be the set $E_J = E_J^+ \cup E_J^-$.   Clearly $E_J$ is homeomorphic to an $(n+1-|J|)$-dimensional Lagrangian coamoeba.  If $J = \{ j \}$ then we denote $E_J$ by $E_j$ and we call it the {\it $j$-th facet} of $C$. The closures $\bar E_J$ of the $J$-th faces satisfy
\[ \begin{split}
              \bar E_j   & = \left \{ [y] \in \bar C \, | \, \inn{u_j}{y} = 0 \mod  \Z \right \}, \quad \text{when} \ j=1, \ldots, n+1 \\
              \bar E_0  & = \left \{ [y] \in \bar C \, | \, \inn{u_0}{y} = \frac{\pi}{2} \mod \Z \right \}, \\
              \bar E_{J} & = \bigcap_{j \in J} \bar E_j.
   \end{split}
                     \] 
where $u_0$ is the vector defined in \eqref{u0}. For convenience we also define 
\[ E_{\emptyset} = C. \]
Faces of dimension $1$ are called edges. If we denote by $J_k$ the complement of $k$ in $\{0, \ldots, n+1 \}$, then 
\[ p_k = E_{J_k}. \]

\begin{figure}[!ht] 
\begin{center}
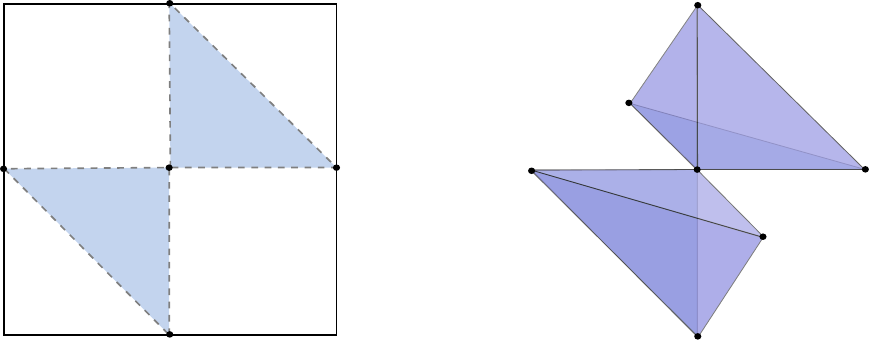
\caption{The $2$ and $3$ dimensional standard coamoebas. They contain their vertices but not their higher dimensional faces.} \label{co_am}
\end{center}
\end{figure}

\subsection{The Lagrangian PL-lift of $\Gamma$} \label{plift} For every $J \subset \{ 0, \ldots, n+1 \}$ with $0 \leq |J| \leq n$, consider the following $n+1$ dimensional subsets of $M_{\R} \times T$:
\begin{equation} \label{ends_lift}
         \hat{\Gamma}_{J} = \Gamma_J \times E_{J}.
\end{equation}
The piecewise linear lift (or PL-lift) of $\Gamma$ is defined to be 
\begin{equation} \label{plLift}
  \hat \Gamma = \bigcup_{0 \leq |J| \leq n} \hat{\Gamma}_J.
\end{equation}
We have that $\hat \Gamma$ is a topological manifold and its smooth part is Lagrangian.

\subsection{Symmetries of $\Gamma$ and $C$}
\label{symmetries}
For every $k=1, \ldots, n+1$ let $R_k$ be the unique affine automorphism of $T$ which maps $C^+$ to itself, exchanges the vertices $p_0$ and $p_k$ and fixes all other vertices. Define $G$ to be the group generated by the maps $R_k$. We have that $G$ acts on the coamoeba $C$. The elements $R_k$ permute the faces of $C$ according to the following rule. Let $R_k$ act on the set of indices $\{0, \ldots, n+1 \}$ as the transposition which exchanges $0$ and $k$ and extend this action to the set of subsets $J \subseteq \{0, \ldots, n+1 \}$ . Then clearly
\[ R_k E_J = E_{R_kJ}. \]

Dually let us define the group acting on $\Gamma$. If $u_0, \ldots, u_{n+1}$ are the vectors in $M_{\R}$ as in \S \ref{stTrop}, let $R^*_k$ be the unique linear map which exchanges $u_0$ and $u_k$ and fixes all other $u_j$'s.   More explicitly
\[ R^*_k(x) = (x_1 - x_k, \, \ldots, \, x_{k-1} - x_k, \, -x_k, \, x_{k+1}-x_k, \, \ldots, \, x_{n+1}-x_k ).\]
We have that $R^*_k$ permutes  the cones $\Gamma_J$ according to the rule
\begin{equation} \label{action_cones}
      R^*_k \Gamma_J = \Gamma_{R_k J}.
\end{equation}
Denote by $G^*$ the group generated by the transformations $R^*_k$. Then $G^*$ acts on $\Gamma$. It is easy to see that 
\[ R_k: y \longmapsto (R^*_k)^t(y) + p_k \]
where $(R^*_k)^t$ is the transpose of $R^*_k$. Therefore we can combine the actions of $G$ and $G^*$ to get an action on the PL-pair of pants $\hat \Gamma$ via the following affine symplectic automorphisms of $T^*T$:
\begin{equation} \label{glob_symm}
 \mathcal R_k(x,y) = (R^*_k x, R_k y).
\end{equation}
Let $\mathcal G$ be the group generated by the $\mathcal R_k$'s. Then $\mathcal G$ acts on $\hat \Gamma$.

\subsection{Lagrangian piecewise linear lifts of tropical hypersurfaces} \label{LagPLift} Let $\Xi$ be a smooth tropical hypersurface in $M_{\R}$ given by a pair $(P, \nu)$ as in \S \ref{trop_hyper}. Given a $k$-dimensional face $e \in (P, \nu)$, with $k=1, \ldots, n+1$, let $\check e$ be the dual $(n+1)-k$ dimensional face of $\Xi$. We will use the involution $\iota$ of $M_{\R} \times N_{\R} / N$ given by $\iota: (x, [y]) \mapsto (x, [-y])$.
Define the following subsets of $N_{\R}/N$:
\[ \begin{split}
            & \bar{C}^{+}_{e}  = \{ [y] \in N_{\R} / N \, | \, 2(y-k) \in e \ \text{for some} \ k \in N \},\\
            & \bar{C}^{-}_{e} = \iota \left(  \bar{C}^{+}_{e} \right), \\
            & \bar{C}_{e}  = \bar{C}^{+}_{e} \cup \bar{C}^{-}_{e}.
   \end{split} \]
A point $[y] \in N_{\R} / N$ is a {\it vertex} of $\bar{C}_e$ if $2(y-k)$ is a vertex of $e$ for some $k \in N$. We define $C^+_e$ (resp. $C^{-}_e$ and $C_{e}$) to be the set of points $[y]$ which are either vertices or relative interior points of $\bar{C}^+_e$ (resp. $\bar{C}^{-}_e$ and $\bar{C}_{e}$). Clearly if $f \preceq e$ is a face of $e$, then $C_f$ is a face of $C_e$. When we view $C_f$ as a face of $C_e$ we denote it $C_{e,f}$.

Now define the Lagrangian lift of $\check e$ to be 
\[ \hat e = \check e \times C_{e}. \]
Clearly $\hat e$ is $(n+1)$-dimensional, moreover the tangent space of $C_{e}$ is the orthogonal complement of the tangent space to $\check e$, thus the interior of $\hat e$ is a Lagrangian submanifold of $M_{\R} \times N_{\R} / N$. We define the Lagrangian $PL$-lift of $\Xi$ to be 
\[ \hat \Xi = \bigcup_{e} \, \hat e \]
where the union is over all faces in $(P, \nu)$ of dimensions $k=1, \ldots, n+1$. It can be shown that $\hat \Xi$ is an $(n+1)$-dimensional topological submanifold of $M_{\R} \times N_{\R} / N$.

\begin{ex} Let $P = \conv \{ (0,0), (1,2), (2,1) \}$ with the unimodal subdivision induced by the piecewise affine function $\nu$ such that $\nu(0,0) = 1$ and $\nu(2,1) = \nu(1,2) = \nu(1,1) = 0$. Let $\Xi$ be the associated tropical curve. If $\check e$ is an edge of $\Xi$, then $\hat e$ is a cylinder. When $e$ is a vertex,  the sets $C_e$ are drawn in Figure \ref{pl_lift}.
\begin{figure}[!ht] 
\begin{center}
\includegraphics{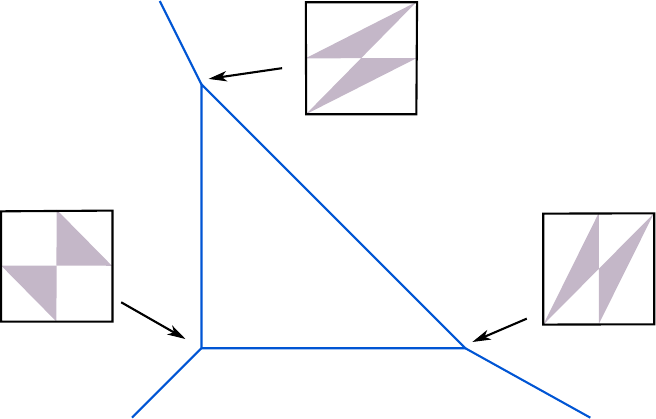}
\caption{} \label{pl_lift}
\end{center}
\end{figure}
\end{ex}
 
Given a $k$-dimensional polyhedron $\check e$ of $\Xi$, define the star-neighborhood of $\check e$ to be the union of the polyhedra of $\Xi$ which contain $\check e$, i.e.
\begin{equation} \label{star_neigh}
    \Xi_{\check e} = \bigcup_{f \preceq e, \ \dim f \geq 1} \ \check f.
\end{equation}
Similarly define its lift
 \[ \hat \Xi_{\check e} = \bigcup_{f \preceq e \ \dim f \geq 1 } \hat f. \]

\section{Lagrangian pairs of pants}
We report the definition of a Lagrangian pair of pants from \cite{lag_pants} and recall the main properties, without proofs.
\subsection{The definition} \label{theconstr}
 \begin{defi} \label{blup} The {\it real blow up of the coamoeba} $C$ at its vertices is the smooth manifold $\tilde C$ defined as follows. If $p$ is one of the vertices $p_0, \ldots, p_{n+1}$ of $C$ and $U_p \subset C$ a small neighborhood of $p$ let 
 \[ \tilde U_p = \{ (q, \ell) \in U_p \times \RP^{n} \, | \, q-p \in \ell \}, \]
 where $\ell$ is a line through the origin in $\R^{n+1}$, which we think as a point of $\RP^{n+1}$. The real blow up of $C$ at $p$ is formed by gluing $\tilde U_p$ to $C-p$ via the projection map $\tilde U_p \rightarrow U_p$. We define $\tilde C$ to be the blow up of $C$ at all vertices. We denote by $\pi: \tilde C \rightarrow C$ the natural projection. Clearly we can identify $\tilde C - \cup_{j=0}^{n+1} \pi^{-1}(p_j)$ with $C - \{p_0, \ldots, p_{n+1} \}$ via $\pi$.   Notice that when $E_J$ is a face of $C$ then it is also a coamoeba inside a smaller dimensional torus, therefore we can define its blow-up which we denote by $\tilde E_J$. Let $G$ be the group acting on $C$ defined in \S \ref{symmetries}. It is easy to see that this action lifts to an action on $\tilde C$.
 \end{defi}

Define the following function $F$ on  $C$: 
\begin{equation} \label{Fglob} 
   F(y)=  \begin{cases} 
                                                \left( \cos \left( \sum_{j=1}^{n+1}y_j \right) \prod_{j=1}^{n+1} \sin y_j  \right)^{\frac{1}{n+1}} \quad \text{on} \ C^+, \\
                                                \ \\
                                               (-1)^{n} \left( \cos \left( \sum_{j=1}^{n+1}y_j \right) \prod_{j=1}^{n+1} \sin y_j  \right)^{\frac{1}{n+1}} \quad \text{on} \ C^-.
                                          \end{cases}
\end{equation}
We have that $F$ is well defined on $C$ and vanishes on the boundary of $C$.  Moreover if $\iota: [y] \mapsto [-y]$ is the involution of the torus then we have that $F$ is $G$ invariant and satisfies $F(\iota(y)) = -F(y)$.
The graph of $dF$ over $C - \{p_0, \ldots, p_{n+1} \}$ inside $T^*T$, i.e. the graph of the map 
\[ \h = (F_{y_1}, \ldots, F_{y_{n+1}}), \]
where $F_{y_j}$ denotes the partial derivative of $F$ with respect to $y_j$, is a Lagrangian submanifold. We have the following

\begin{lem} \label{smoothExt} Let $F: C \rightarrow \R$ be as in \eqref{Fglob}. Then $F$ and the map $\h: C - \{p_0, \ldots, p_{n+1} \} \rightarrow M_{\R}$ extend smoothly to $\tilde C$ and the map $\Phi: \tilde C \rightarrow T^*T$ given by 
\begin{equation} \label{phi:graph}
  \Phi(q) = (\mathbf{h}(q), \pi(q) ).
\end{equation}
is a Lagrangian embedding of $\tilde C$.
\end{lem}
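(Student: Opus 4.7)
The plan is to verify the three claims---smooth extension of $F$, smooth extension of $\h = (F_{y_1}, \ldots, F_{y_{n+1}})$, and the Lagrangian embedding property of $\Phi$---by a local analysis in blowup charts around each vertex, reducing to one vertex via symmetry. Since $F$ is $G$-invariant up to sign under $\iota$, the $R_k$'s act transitively on the vertices $p_0, \ldots, p_{n+1}$, and the action lifts to $\tilde C$, it suffices to work in a single chart around $p_0 = 0$.

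On $U_{p_0}\cap C$ I will use blowup coordinates $(r, \xi_2, \ldots, \xi_{n+1})$ with chart map $y = (r, r\xi_2, \ldots, r\xi_{n+1})$, $\xi_j > 0$, and $r$ ranging over an interval around $0$: positive $r$ covers $C^+$, negative $r$ covers $C^-$, and $r = 0$ is the exceptional fiber over $p_0$. Writing $\sin s = s\cdot \mathrm{sinc}(s)$ with $\mathrm{sinc}$ smooth and strictly positive near zero, the expression inside the $(n+1)$-th root in \eqref{Fglob} factors as
\[
\cos\Bigl(\sum_j y_j\Bigr) \prod_{j=1}^{n+1} \sin y_j \;=\; r^{n+1} \Bigl( \prod_{j=2}^{n+1} \xi_j \Bigr) H(r, \xi),
\]
with $H$ smooth and $H(0, \xi) = 1$. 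A separate check in the cases $n+1$ even and $n+1$ odd shows that the sign $(-1)^n$ in the definition of $F$ on $C^-$ is exactly what is needed so that, in both cases,
\[
F(r, \xi) \;=\; r \cdot \Bigl( \prod_{j=2}^{n+1} \xi_j \Bigr)^{1/(n+1)} H(r, \xi)^{1/(n+1)}
\]
is a single formula valid for all small $r$ of either sign. It is smooth because $\xi_j > 0$ and $H > 0$. Smoothness of the partials $\h_j = F_{y_j}$ now follows by chain rule: writing $F = r A(r, \xi)$ with $A$ smooth, a direct computation gives
\[
F_{y_1} = A + r A_r - \sum_{k \geq 2} \xi_k A_{\xi_k}, \qquad F_{y_j} = A_{\xi_j} \ \text{ for } j \geq 2,
\]
each smooth across $r = 0$.

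For the Lagrangian embedding property of $\Phi = (\h, \pi)$: on $\tilde C \setminus \bigcup_j \pi^{-1}(p_j)$, $\Phi$ is the graph of $dF$ inside $T^*T$, hence Lagrangian. Since $\Phi$ is smooth on all of $\tilde C$ and $\Phi^*\omega$ vanishes on an open dense subset, it vanishes everywhere by continuity. Injectivity holds off the exceptional fibers (graph of a function); on the exceptional fiber over $p_0$, the formula above gives $\h_j|_{r=0} = \frac{1}{n+1} \xi_j^{-1} (\prod_{k \geq 2} \xi_k)^{1/(n+1)}$ for $j \geq 2$, from which the products $\xi_j \h_j$ are all equal, fixing the projective direction and hence $\xi$; so $\Phi$ restricted to the fiber is injective. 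The immersion property at an exceptional point is verified by computing $d\Phi$ in the $(r, \xi)$-chart and checking maximal rank using the explicit expressions above.

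The chief obstacle is the smooth extension of $F$ itself: despite the $(n+1)$-th root in its definition, the factorization exhibits the vanishing at $p_0$ as precisely of order $n+1$ along every ray through $p_0$, with a smooth positive angular coefficient $\prod \xi_j$; this positivity relies crucially on the fact that $C$ contains only open simplex interiors and the vertices, so that $\xi_j > 0$ throughout the chart. Once this factorization and the matching of signs between $C^+$ and $C^-$ are in hand, the smoothness of $\h$ and the embedding properties reduce to straightforward chain rule and linear algebra.
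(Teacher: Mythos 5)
A preliminary remark: this paper states Lemma \ref{smoothExt} without proof (Section 3 explicitly recalls it from \cite{lag_pants}), so there is no printed argument here to compare against; what follows is an assessment of your proposal on its own terms. Your route is the natural one and its core is correct: reduction to the vertex $p_0$ by the $G$-symmetry, the blow-up chart $y=(r,r\xi_2,\dots,r\xi_{n+1})$ with $\xi_j>0$, the factorization of the radicand as $r^{n+1}\bigl(\prod_{j\ge 2}\xi_j\bigr)H(r,\xi)$ with $H$ smooth and positive, the verification that the sign $(-1)^n$ in \eqref{Fglob} makes $F=rA(r,\xi)$ one smooth formula across $r=0$, and the chain-rule formulas $F_{y_1}=A+rA_r-\sum_{k\ge2}\xi_kA_{\xi_k}$, $F_{y_j}=A_{\xi_j}$, all check out, as does the density/continuity argument for $\Phi^*\omega=0$.

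There are, however, two thin spots. (i) Your injectivity argument on the exceptional fibre is incomplete as written: knowing that the products $\xi_jh_j$ are all equal only recovers the ratios $\xi_j/\xi_k$, i.e.\ the projective class of $(\xi_2,\dots,\xi_{n+1})$, not $\xi$ itself; you must also recover the scale. This is immediate from your own formulas, e.g.\ $h_1|_{r=0}=\tfrac{1}{n+1}\bigl(\prod_{k\ge2}\xi_k\bigr)^{1/(n+1)}$ gives $\xi_j=h_1/h_j$, or one solves for the common value $c=\xi_jh_j$ from $c=\tfrac{1}{n+1}\bigl(\prod_k c/h_k\bigr)^{1/(n+1)}$ --- but a line to this effect is needed. (ii) More substantially, ``embedding'' means injective immersion \emph{and} homeomorphism onto the image, and since $\tilde C$ is non-compact the latter is a genuine condition that your write-up never addresses. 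The danger is a sequence $q_k$ with $\pi(q_k)\to p_0$ but degenerating blow-up direction (some $\xi_j\to 0$ or $\infty$) whose images nonetheless accumulate on a point of $\mathcal S_0\times\{p_0\}$; if that happened, $\Phi$ would be an injective immersion but not an embedding. It does not happen, because $\pi(q_k)\to p_0$ forces all coordinates $y_j\to 0$, where $h_j\sim\tfrac{1}{n+1}\bigl(\prod_k y_k\bigr)^{1/(n+1)}/y_j$ uniformly, so $h_j/h_l\sim y_l/y_j$ and convergence of $\h(q_k)$ to a point with all coordinates positive and finite forces the direction to converge, whence $q_k$ converges in $\tilde C$; note that Corollary \ref{h_near_bndry} alone does not settle this, since $\mathcal S_0\subset\partial\mathcal H$. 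This short argument (or an equivalent properness statement) should be added. Finally, the immersion check you defer is indeed routine with your formulas: at $r=0$, $d\pi$ is nonzero on $\partial_r$ and kills the $\xi$-directions, and $\xi\mapsto\h(0,\xi)$ is an immersion because $\xi_j=h_1/h_j$ provides a smooth left inverse.
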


Whenever $F$ is a function on $C$ satisfying the above lemma, we say that the map $\Phi$ is the {\it graph of an exact one form over $\tilde C$}. 
 
\begin{defi} \label{LpPants}  We call the submanifold $L=\Phi(\tilde C)$ the {\it standard Lagrangian pair of pants} in $T^*T$. Given $\lambda >0$, let $\Phi_{\lambda}$ be the embedding constructed from $\h_{\lambda} = (\lambda F_{y_1}, \ldots, \lambda F_{y_{n+1}})$ via \eqref{phi:graph}. Then,  if $\lambda \neq 1$, we call $\Phi_{\lambda}(\tilde C)$ a {\it $\lambda$-rescaled Lagrangian pair of pants}
\end{defi}

We have that $L$ has the following symmetries
\begin{lem} \label{eq_action} Given a transformation $R_k$ as in \S \ref{symmetries} and the involution $\iota$ of the torus, the map $\h: \tilde C \rightarrow M_{\R}$ defined using $F$ satisfies
\[ \h(R_k(y)) = R^*_k \h(y) \quad \text{and} \quad \h(\iota(y)) = \h(y) \]
In particular the group $\mathcal G$ and the involution act on a Lagrangian pair of pants.
\end{lem}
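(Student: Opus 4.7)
The plan is to reduce both identities to the chain rule applied to the equivariance properties of $F$ recalled in \S\ref{theconstr}, namely $F \circ R_k = F$ and $F \circ \iota = -F$. It suffices to verify the identities on the open dense subset $C - \{p_0, \ldots, p_{n+1}\} \subset \tilde C$, where $\h$ is literally the Euclidean gradient of $F$; the extension to all of $\tilde C$ then follows from Lemma \ref{smoothExt} by continuity of $\h$ and of the $G$-action, which lifts to $\tilde C$ by Definition \ref{blup}.

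For the $R_k$-equivariance, recall from \S\ref{symmetries} that $R_k(y) = (R^*_k)^t y + p_k$, so the Jacobian of $R_k$ in the $y$-coordinates is the linear map $(R^*_k)^t$. Differentiating the identity $F(R_k y) = F(y)$ component-wise and using the chain rule yields
\[
((R^*_k)^t)^t \, \h(R_k y) = \h(y),
\]
where the outer $t$ arises as the adjoint in the chain rule. Since taking the transpose is an involution on linear maps, $((R^*_k)^t)^t = R^*_k$, and since $R^*_k$ is itself an involution (it exchanges $u_0$ and $u_k$ and fixes the remaining $u_j$), applying $R^*_k$ to both sides produces the first identity $\h(R_k y) = R^*_k \h(y)$. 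For the involution, differentiating $F(-y) = -F(y)$ component-wise gives directly $-\h(-y) = -\h(y)$, i.e.\ $\h(\iota y) = \h(y)$.

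Finally, having established the two equivariance properties of $\h$, the assertion that $\mathcal G$ and $\iota$ act on $L = \Phi(\tilde C)$ is a one-line check. Using $\mathcal R_k(x,y) = (R^*_k x, R_k y)$ and the compatibility of the lifted $G$-action with the blow-up projection $\pi$, we compute
\[
\mathcal R_k\bigl(\Phi(q)\bigr) = \bigl(R^*_k \h(q),\, R_k \pi(q)\bigr) = \bigl(\h(R_k q),\, \pi(R_k q)\bigr) = \Phi(R_k q),
\]
so $\mathcal R_k L = L$, and an identical calculation with $\tilde\iota(x,y) = (x, -y)$ yields $\tilde\iota \circ \Phi = \Phi \circ \iota$, whence $\tilde\iota L = L$. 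The only delicate point in the whole argument is the bookkeeping with transposes: one must interpret $(R^*_k)^t$ consistently as the dual map, so that its further transpose in coordinates recovers $R^*_k$. No real analysis is required beyond the smooth extension already provided by Lemma \ref{smoothExt}.
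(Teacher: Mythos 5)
Your proof is correct: differentiating the identities $F\circ R_k=F$ and $F\circ\iota=-F$ recalled in \S 3.1, with the linear part $(R^*_k)^t$ of $R_k$ and the involutivity of $R^*_k$, gives exactly $\h(R_k y)=R^*_k\h(y)$ and $\h(\iota y)=\h(y)$ on the dense set $C-\{p_0,\ldots,p_{n+1}\}$, and the extension to $\tilde C$ by Lemma \ref{smoothExt} plus the one-line check $\mathcal R_k\circ\Phi=\Phi\circ R_k$, $\tilde\iota\circ\Phi=\Phi\circ\iota$ is exactly what is needed. The present paper states this lemma without proof (it is imported from \cite{lag_pants}), and your chain-rule equivariance argument is the standard one intended there, with the transpose bookkeeping handled correctly.
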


\subsection{Some properties} \label{Imh} 

For every $k \in \{0, \ldots, n+1 \}$ define 
\[\mathcal H_{k} = \left \{ t_k u_0 + \sum_{l \neq k} t_l u_l \, | \, t_l \geq 0 \ \forall l \ \text{and} \ t_1t_2 \ldots t_{n+1} \leq  \frac{1}{(n+1)^{n+1}} \right\}. \]
Recall that we defined $J_{k}$ to be the complement of $k$ in $\{0, \ldots, n+1 \}$ (see \S \ref{stCoam}). Then 
\[ \mathcal H_{k} \subset \Gamma_{J_k} \quad \text{and}  \quad \mathcal H_{k} = R^*_{k} \mathcal H_{0}. \]

Let 
\begin{equation} \label{the amoeba}
        \mathcal H = \bigcup_{l=0}^{n+1} \mathcal H_l,
\end{equation}
see Figure \ref{lagrangian_amoeba}. Let $\mathcal S_0$ be the hypersurface
\begin{equation} \label{bound_amoeba}
 \mathcal S_0: \quad  (n+1)^{n+1} x_1 \ldots x_{n+1} = 1 \ \text{and} \  x_j > 0, \forall j.
\end{equation}
and let 
\begin{equation} \label{bound_amoeba_k}
   \mathcal S_k = R^*_{k} \mathcal S_{0}.
\end{equation}
Then the boundary of $\mathcal H$ is 
\[ \partial \mathcal H = \bigcup_{l=0}^{n+1} \mathcal S_l. \]

\begin{figure}[!ht] 
\begin{center}
\includegraphics{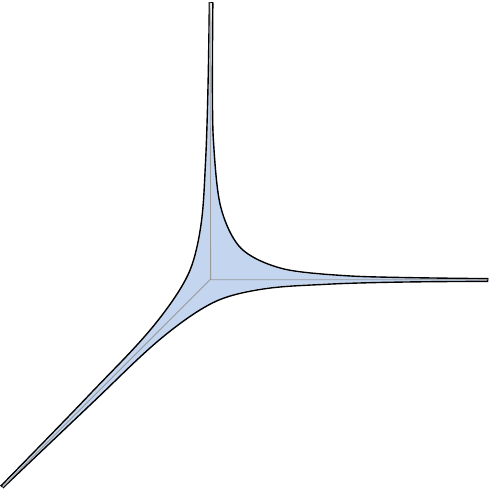}
\caption{The set $\mathcal H$ in the case $n=1$} \label{lagrangian_amoeba}
\end{center}
\end{figure}

\begin{prop} \label{imh} Assume $n=1$ or $2$. The image of $\h: \tilde C \rightarrow M_{\R}$ is $\mathcal H$ and the hypersurface $\mathcal S_k$ is the image of the set $\pi^{-1}(p_k)$. Moreover $\h$ defines a diffeomorphism between $\inter C^+$ and $\inter \mathcal H$. 
\end{prop}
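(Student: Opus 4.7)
The plan is to prove the proposition in four steps: (i) compute the image of $\h$ on each exceptional divisor $\pi^{-1}(p_k)$ and identify it with $\mathcal S_k$, (ii) show $\h(\inter C^+) \subseteq \inter \mathcal H$, (iii) show $\h|_{\inter C^+}$ is a local diffeomorphism, and (iv) upgrade this to a global diffeomorphism onto $\inter \mathcal H$ via a proper-covering argument. By the equivariance $\h \circ R_k = R_k^* \circ \h$ from Lemma \ref{eq_action}, it suffices to analyse everything near $p_0$ and in a single $\mathcal G$-fundamental domain.

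For step (i), I approach $p_0$ along a ray $y = t(a_1,\ldots,a_{n+1})$ with $a_j > 0$ and $t \to 0^+$. From $F_{y_k} = \tfrac{F}{n+1}\cdot\tfrac{\cos(s+y_k)}{\cos s\,\sin y_k}$, with $s = \sum_j y_j$, the leading asymptotic as $t \to 0^+$ is
\[
\lim_{t\to 0^+} F_{y_k}(ta) \;=\; \frac{1}{n+1}\left(\frac{\prod_j a_j}{a_k^{n+1}}\right)^{\!1/(n+1)}.
\]
These limits have positive components with product $1/(n+1)^{n+1}$, so they land on $\mathcal S_0$, and the explicit inverse $a_k = (1/x_k)/\sum_j(1/x_j)$ shows that the induced map from the open simplex of positive directions to $\mathcal S_0$ is a diffeomorphism. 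The involution $\iota$ identifies the $C^+$ and $C^-$ contributions to $\pi^{-1}(p_0) \cong \RP^n$, and $\mathcal R_k$-equivariance then yields $\h(\pi^{-1}(p_k)) = \mathcal S_k$ for every $k$.

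For step (ii), I partition $\inter C^+ = \bigcup_k A_k$, where $A_k = \h^{-1}(\Gamma_{J_k}) \cap \inter C^+$. Since the sign of $F_{y_k}$ matches the sign of $\cos(s+y_k)$, one checks $A_0 = \{y \in \inter C^+ : y_k + s \leq \pi/2 \ \forall k\}$ and $A_k = R_k(A_0)$; the $A_k$ cover $\inter C^+$ because $\{\Gamma_{J_k}\}_k$ is a complete fan of $M_\R$. On $A_0$, the elementary bound $\cos(s+y_k) \leq \cos s$ gives
\[
\prod_k F_{y_k} \;=\; \frac{\prod_k \cos(s+y_k)}{(n+1)^{n+1}\cos^n s} \;\leq\; \frac{\cos s}{(n+1)^{n+1}} \;\leq\; \frac{1}{(n+1)^{n+1}},
\]
so $\h(A_0) \subseteq \mathcal H_0$ and, by equivariance, $\h(\inter C^+) \subseteq \mathcal H$.

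For steps (iii) and (iv), the Jacobian of $\h$ is $\hes(F)$. Setting $H = \nabla \log F$ and $M = \hes(\log F) = -\tfrac{1}{n+1}\bigl(\sec^2 s \cdot \mathbf{1} + \mathrm{diag}(\csc^2 y_k)\bigr)$ (which is negative definite on $\inter C^+$), the product rule gives $\hes(F) = F(M + HH^T)$. By Sherman--Morrison, $\det(M + HH^T) = \det M \cdot (1 + H^T M^{-1} H)$, and after clearing denominators the sign of $1 + H^T M^{-1} H$ is controlled by
\[
\bigl(\cos^2 s + \textstyle\sum_j \sin^2 y_j\bigr)\Bigl[(n+1)\cos^2 s - \textstyle\sum_k \cos^2(s+y_k)\Bigr] + \Bigl(\textstyle\sum_k \sin y_k \cos(s+y_k)\Bigr)^{\!2}.
\]
The trigonometric identity $(n+1)\cos^2 s - \sum_k \cos^2(s+y_k) = \sum_k \sin y_k \sin(2s+y_k)$, together with $2s + y_k \leq \pi$ on $A_0$, forces the first bracket to be non-negative and strictly positive on $\inter A_0$, so $\hes(F)$ is non-singular on $A_0$ and, by $\mathcal G$-equivariance, on all of $\inter C^+$. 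Properness of $\h : \inter C^+ \to \inter \mathcal H$ follows from the boundary analysis: sequences approaching a facet $E_k$ with $k \geq 1$ have $F_{y_k} \to +\infty$, and sequences approaching a vertex $p_k$ have $\h(y_n) \to \mathcal S_k \subset \partial \mathcal H$ by step (i). Since $\inter \mathcal H$ is simply connected for $n = 1, 2$ (it deformation retracts onto the contractible tropical hyperplane $\Gamma$), a proper local diffeomorphism onto it is automatically a diffeomorphism. The hardest step will be controlling the sign in the Sherman--Morrison computation; the identity above together with the defining constraint of $A_0$ is what makes the argument succeed in the low-dimensional cases $n = 1, 2$.
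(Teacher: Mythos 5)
You should first note that this paper does not actually contain a proof of Proposition \ref{imh}: Section 3 explicitly recalls it from \cite{lag_pants} ``without proofs'', so there is no in-paper argument to compare yours against. Judged on its own terms, your outline is essentially sound. The radial-limit computation at $p_0$, the identification of $\h$ on the exceptional fibre with the map $a\mapsto \frac{1}{n+1}\bigl(\prod_j a_j/a_k^{n+1}\bigr)^{1/(n+1)}$ onto $\mathcal S_0$ (legitimate because Lemma \ref{smoothExt} guarantees the extension to $\tilde C$, so radial limits compute its boundary values), the decomposition of $\inter C^+$ into $A_k=\h^{-1}(\Gamma_{J_k})$ with $A_k=R_k(A_0)$ via Lemma \ref{eq_action} and \eqref{action_cones}, the bound $\prod_k F_{y_k}\le \cos s/(n+1)^{n+1}$ on $A_0$, and the matrix-determinant-lemma reduction of $\det\hes F$ together with the identity $(n+1)\cos^2 s-\sum_k\cos^2(s+y_k)=\sum_k\sin y_k\sin(2s+y_k)$ all check out (I verified the algebra and spot-checked it numerically). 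It is worth remarking that your Hessian nondegeneracy step nowhere uses $n\le 2$, so you appear to prove more than the stated range; given that the author could not establish the statement for $n\ge 3$, you should double-check that nothing in your later steps secretly needs the dimension restriction.

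There are, however, gaps to patch, all in steps (ii) and (iv). First, properness: your justification covers only the facets $E_k$ with $k\ge1$ and radial approach to vertices. You must also treat the facet $E_0$ (where $s\to\pi/2$), the faces of codimension two when $n=2$, and --- the genuine gap --- sequences converging to a vertex whose secant directions degenerate to the boundary of the simplex of directions: there step (i) says nothing, and you need an extra estimate showing $\|\h\|\to\infty$ (e.g. $\max_k F_{y_k}$ blows up when $\min_j a_j/\max_j a_j\to 0$), or you can simply invoke Lemma \ref{hnear_faces} and Corollary \ref{h_near_bndry}, which the paper supplies for exactly this purpose. Second, your displayed inequality only gives $\h(\inter C^+)\subseteq\mathcal H$, not $\inter\mathcal H$; strictness is needed for the covering argument and follows because equality in your chain would force $\cos s=1$, impossible on $\inter C^+$. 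Third, you need positivity of the bracket on all of $A_0\cap\inter C^+$, not just on $\inter A_0$, since the interiors of the $A_k$ need not cover $\inter C^+$; this is in fact true because $2s+y_k\le \pi/2+s<\pi$ there, so state it that way. Finally, two cosmetic points: $\pi^{-1}(p_k)$ is the simplex of lines entering the cone at $p_k$, not $\RP^n$ (nothing in your argument uses this), and simple connectedness of $\inter\mathcal H$ is most cheaply obtained by observing that each $\mathcal H_k$, hence $\mathcal H$ and $\inter\mathcal H$, is star-shaped about the origin, rather than via a retraction onto $\Gamma$.
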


This statement must be true for all values of $n$, but unfortunately we have been able to write a complete proof only in these dimensions. 

\begin{cor} \label{HessFneg} Assuming $n=1$ or $2$. Let $F$ be as in \eqref{Fglob}, then the Hessian of $F$, restricted to $\inter C^+$, is negative definite.
\end{cor}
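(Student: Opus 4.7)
The plan is to bootstrap from Proposition~\ref{imh}. Since $\h = (F_{y_1}, \ldots, F_{y_{n+1}})$ is precisely the gradient of $F$, the Jacobian matrix of $\h$ at a point $y \in \inter C^+$ is exactly $\hes F(y)$. Proposition~\ref{imh} asserts that $\h$ restricts to a diffeomorphism $\inter C^+ \to \inter \mathcal{H}$, so in particular its Jacobian is invertible everywhere, and $\hes F$ is non-degenerate at every point of $\inter C^+$.

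Next I locate a critical point of $F$ inside $C^+$. Inspecting the formula \eqref{Fglob}, on $\inter C^+$ one has $y_j \in (0, \pi/2)$ and $\sum_j y_j \in (0, \pi/2)$, so each $\sin y_j$ and $\cos(\sum_j y_j)$ is strictly positive; hence $F > 0$ on $\inter C^+$. At any point of $\partial C^+$ at least one of these factors vanishes (either some $y_j = 0$, or $\sum_j y_j = \pi/2$), so $F \equiv 0$ on $\partial C^+$. Since $F$ is continuous on the compact simplex $\overline{C^+}$ (smooth on the interior from \eqref{Fglob}, and continuous up to the boundary where it vanishes), it attains a positive global maximum at some $y^* \in \inter C^+$. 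At such a point $\nabla F(y^*) = 0$ and $\hes F(y^*)$ is negative semi-definite.

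Combining the two observations, $\hes F(y^*)$ is both non-degenerate and negative semi-definite, hence negative definite. The eigenvalues of $\hes F$ vary continuously with $y$ and, by the first paragraph, none of them can vanish anywhere on the connected set $\inter C^+$; therefore the number of negative eigenvalues is constant equal to $n+1$, and $\hes F$ is negative definite throughout $\inter C^+$.

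The only delicate step is to guarantee that the maximum of $F$ lies in $\inter C^+$ rather than on $\partial C^+$, and this is immediate from the strict sign comparison $F|_{\inter C^+} > 0 = F|_{\partial C^+}$. Everything else is a direct consequence of Proposition~\ref{imh} together with the continuity of eigenvalues; note that the restriction to $n = 1, 2$ enters only through its role in Proposition~\ref{imh}.
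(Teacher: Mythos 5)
Your proof is correct and is essentially the intended derivation: the paper states this result without proof (recalling it from \cite{lag_pants}) precisely as a consequence of Proposition \ref{imh}, and your chain — the Jacobian of $\h$ is $\hes F$, hence non-degenerate on $\inter C^+$ by the diffeomorphism; $F>0$ inside and $F=0$ on the boundary forces an interior maximum where $\hes F$ is negative semi-definite, hence negative definite; and constancy of the signature on the connected set $\inter C^+$ — fills that gap cleanly, with the restriction to $n=1,2$ entering only through Proposition \ref{imh}, as you note.
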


We expect also this to be true for all $n$. Let us give a more detailed description of the map $\h$.


\begin{defi} For every pair of vertices $p_k$ and $p_j$ of $C^+$, let $\delta_{jk}$ be the hyperplane that contains all vertices different from $p_k$ and $p_j$ and passes through the middle point of the edge from $p_k$ to $p_j$. This hyperplane cuts $C^+$ in two halves. We denote by $\Delta_{jk}$ the half which contains $p_k$.
\end{defi}

Clearly, the set of hyperplanes $\delta_{jk}$ cuts $C^+$ into the first barycentric subdivision of $C^+$. 
We have the following inequalities defining $\Delta_{jk}$
\begin{equation} \label{inqDelta1} 
     \Delta_{j0} =  \left \{ y \in C^+ \, | \, 2y_j + \sum_{k\neq j} y_k \leq \frac{\pi}{2} \right \} 
\end{equation}
and when $j,k \neq 0$
\begin{equation} \label{inqDelta2} 
 \Delta_{jk} = \left \{ y \in C^+ \, | \,  y_k-y_j \geq 0 \right \}.
\end{equation}
For every face $E_J^+$ of $C^+$ let $\mathcal W_{J}^+$ denote its star neighborhood, i.e. the union of simplices of the barycentric subdivision whose closures contain the barycenter of $E_J^+$. We have that 
\begin{equation} \label{facenbh}
       \mathcal W_{J}^+ = \bigcap_{k \notin J, j \in J} \Delta_{jk}.
\end{equation}
As usual we denote by $\mathcal W_{J}^-$ the image of $\mathcal W_{J}^+$ with respect to $\iota$ and 
\[ \mathcal W_{J} = \mathcal W_{J}^- \cup \mathcal W_{J}^+ \quad \text{and} \quad  \tilde{\mathcal W}_{J}= \pi^{-1}(\mathcal W_{J}). \]

We have a dual structure for $\mathcal H$. 

\begin{defi}
For every $j,k =0, \ldots, n+1$ with $j \neq k$ let 
\[ d_{jk} =  \spn_{\R} \{ u_l \, | \, l \neq j,k \} \]
It is a codimension $1$ vector subspace which divides $M_{\R}$ in two halves. Denote by $D_{jk}$ the half containing $u_j$. 
\end{defi}

We have the following inequalities defining $D_{jk}$
\[ D_{j0}=  \left \{ x \in M_{\R} \, | \, x_j \geq 0 \right \} \]
and when $j,k \neq 0$
\[ D_{jk} = \left \{ x \in M_{\R}  \, | \,  x_j-x_k \geq 0 \right \}. \]
Let 
\begin{equation} \label{h_nbhoods}
  \mathcal V_{J} = \bigcap_{j \in J, k \notin J} D_{jk}
\end{equation}
When $1 \leq |J| \leq n$, $\mathcal V_{J}$ contains the face $\Gamma_J$ of $\Gamma$ and can be regarded as a neighborhood of it, analogous to the star neighborhood $\mathcal W_{J}$ of the face $E_{J}$. Moreover 
\[ \mathcal V_{J_k} \cap \mathcal H = \mathcal H_k.\]

We have the following useful facts:

\begin{lem} \label{tnbhd} We have that $R^*_l( \mathcal V_{J}) = \mathcal V_{R_lJ}$ and $R_l(\mathcal W_{J}) = \mathcal W_{R_lJ}$.
\end{lem}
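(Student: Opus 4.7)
The plan is to reduce both identities to the elementary fact that $R_l$ and $R^*_l$ permute, respectively, the vertices $\{p_0,\ldots,p_{n+1}\}$ of $C^+$ and the vectors $\{u_0,\ldots,u_{n+1}\}$ of $M_{\R}$ by exactly the transposition $R_l$ swapping $0$ and $l$. The half-spaces $D_{jk}$ and $\Delta_{jk}$ are characterised in terms of these vertices/vectors alone, so everything will follow by tracking indices through the intersections.

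First, for $R^*_l(\mathcal V_J) = \mathcal V_{R_l J}$: since $R^*_l$ is linear and sends $u_j$ to $u_{R_l(j)}$ (by definition for $j \neq 0, l$, and by the relation $u_0 = -\sum_{j=1}^{n+1} u_j$ for $j=0,l$), it maps the hyperplane $d_{jk} = \spn\{u_i : i \neq j,k\}$ onto $d_{R_l(j),R_l(k)}$. The half-space $D_{jk}$ is distinguished by containing $u_j$, so its image is the half-space containing $u_{R_l(j)}$, namely $D_{R_l(j),R_l(k)}$. Intersecting over $j \in J$, $k \notin J$ and re-indexing by $(j',k') = (R_l(j),R_l(k))$ yields $\mathcal V_{R_l J}$, since $R_l$ is an involution permuting $\{0,\ldots,n+1\}$.

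Next, for $R_l(\mathcal W_J) = \mathcal W_{R_l J}$, I would first handle the $C^+$ part. The affine map $R_l$ restricts to an affine automorphism of $C^+$ satisfying $R_l(p_i) = p_{R_l(i)}$. The hyperplane $\delta_{jk}$ is characterised as the unique affine hyperplane through the $p_i$ with $i \neq j,k$ and through the midpoint of $[p_j, p_k]$; affineness of $R_l$ then gives $R_l(\delta_{jk}) = \delta_{R_l(j),R_l(k)}$, and the $p_k$-containing half $\Delta_{jk}$ maps to the $p_{R_l(k)}$-containing half $\Delta_{R_l(j),R_l(k)}$. Intersecting gives $R_l(\mathcal W_J^+) = \mathcal W_{R_l J}^+$.

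Finally, to extend to $\mathcal W_J^- = \iota(\mathcal W_J^+)$, I would verify that $R_l$ and $\iota$ commute as maps on $T$. Using the formula $R_l(y) = (R^*_l)^t(y) + p_l$ from \S\ref{symmetries}, one computes
\[ R_l(\iota(y)) - \iota(R_l(y)) = 2 p_l = \pi u^*_l \in N, \]
so $R_l \circ \iota = \iota \circ R_l$ on $T = N_{\R}/N$. Hence
\[ R_l(\mathcal W_J^-) = R_l(\iota(\mathcal W_J^+)) = \iota(R_l(\mathcal W_J^+)) = \iota(\mathcal W_{R_l J}^+) = \mathcal W_{R_l J}^-, \]
and combining with the $+$ part gives $R_l(\mathcal W_J) = \mathcal W_{R_l J}$. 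There is no genuine obstacle here; the only subtle point is the commutation of $R_l$ with $\iota$ on the quotient torus, which is where the mod-$N$ bookkeeping matters.
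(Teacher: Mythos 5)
Your proof is correct, and all the delicate points are handled: the identification of $R^*_l$ with the index transposition on $\{u_0,\dots,u_{n+1}\}$ and of $R_l$ with the transposition on the vertices $\{p_0,\dots,p_{n+1}\}$ gives the permutation of the half-spaces $D_{jk}$ and of the halves $\Delta_{jk}$ of $C^+$, and the commutation $R_l\circ\iota=\iota\circ R_l$ on $T$, which is exactly where the mod-$N$ bookkeeping enters, follows as you say from $2p_l=\pi u^*_l\in N$. Note that the present paper states Lemma \ref{tnbhd} without proof (it is among the facts recalled from \cite{lag_pants}), so there is no in-paper argument to compare with; your equivariance argument is the natural one, and alternatively one could check the same identities directly from the explicit inequalities \eqref{inqDelta1}, \eqref{inqDelta2} and their duals for $D_{jk}$.
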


\begin{lem} \label{hNbhd}
\[ \h( \tilde{\mathcal W}_{J}) = \mathcal V_{J} \cap \mathcal H \]
\end{lem}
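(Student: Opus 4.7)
The plan is to reduce to a pairwise matching between the half-spaces cutting out $\mathcal{W}_J^+$ and those cutting out $\mathcal{V}_J$, and then intersect. Since $\mathcal{W}_J=\mathcal{W}_J^+\cup\iota(\mathcal{W}_J^+)$ and $\h\circ\iota=\h$ by Lemma~\ref{eq_action}, it is enough to show $\h(\pi^{-1}(\mathcal{W}_J^+))=\mathcal{V}_J\cap\mathcal{H}$. By \eqref{facenbh} and \eqref{h_nbhoods} both sides are intersections over the same index set $\{(j,k):j\in J,\,k\notin J\}$, and since $\h\colon\inter C^+\to\inter\mathcal{H}$ is a diffeomorphism by Proposition~\ref{imh}, it suffices to establish the pairwise equivalence
\[
y\in\Delta_{jk}\quad\Longleftrightarrow\quad\h(y)\in D_{jk}
\]
for every pair $(j,k)$ with $j\neq k$ and every $y\in\inter C^+$, and then pass to closures.

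I would prove the pairwise equivalence by a direct computation of the partials of $F$. Differentiating $F^{n+1}=\cos\Sigma\,\prod_l\sin y_l$ (with $\Sigma=\sum_l y_l$) yields
\[
F_{y_j}=\frac{\cos(\Sigma+y_j)\prod_{l\neq j}\sin y_l}{(n+1)F^n}.
\]
On $\inter C^+$ the prefactor is strictly positive and $\Sigma+y_j\in(0,\pi)$, so $F_{y_j}\geq 0$ is equivalent to $\Sigma+y_j\leq\pi/2$, which by \eqref{inqDelta1} is precisely $y\in\Delta_{j0}$; since $D_{j0}=\{x_j\geq 0\}$ this disposes of the pairs $(j,0)$, and the analogous computation applied to $F_{y_k}$ handles the pairs $(0,k)$. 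For pairs $j,k\neq 0$, subtracting the two expressions and applying the trigonometric identity
\[
\cos(\Sigma+y_j)\sin y_k-\cos(\Sigma+y_k)\sin y_j=\cos\Sigma\,\sin(y_k-y_j)
\]
gives
\[
F_{y_j}-F_{y_k}=\frac{\cos\Sigma\,\sin(y_k-y_j)\prod_{l\neq j,k}\sin y_l}{(n+1)F^n},
\]
which on $\inter C^+$ is nonnegative iff $y_k\geq y_j$, matching $\Delta_{jk}=\{y_k\geq y_j\}$ of \eqref{inqDelta2} with $D_{jk}=\{x_j\geq x_k\}$.

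Intersecting these equivalences over $(j,k)\in J\times J^c$ and using injectivity of $\h$ on $\inter C^+$ yields $\h(\mathcal{W}_J^+\cap\inter C^+)=\mathcal{V}_J\cap\inter\mathcal{H}$. To upgrade this to the full equality, I would use the smooth extension of $\h$ to $\tilde C$ of Lemma~\ref{smoothExt}: each blown-up fibre $\pi^{-1}(p_l)$ with $p_l\in\overline{\mathcal{W}_J^+}$ maps into $\mathcal{S}_l\cap\mathcal{V}_J$ by continuity of the inequalities above, and the remaining boundary of $\mathcal{W}_J^+$ (where $F=0$ on facets of $C^+$) maps into the appropriate boundary stratum of $\mathcal{H}$; density of $\mathcal{V}_J\cap\inter\mathcal{H}$ in $\mathcal{V}_J\cap\mathcal{H}$ together with continuity of $\h$ then force equality. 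The main obstacle is the trigonometric identity above, which is what produces the clean factor $\cos\Sigma\,\sin(y_k-y_j)$ and thereby makes the barycentric hyperplanes $\delta_{jk}$ correspond to the coordinate hyperplanes $d_{jk}$; once this is in place, the rest of the argument is just bookkeeping on top of this correspondence.
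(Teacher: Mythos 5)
Your computation in the interior is correct and is the right mechanism for this lemma: the formula $F_{y_j}=\cos(\Sigma+y_j)\prod_{l\neq j}\sin y_l/\bigl((n+1)F^n\bigr)$, the identity $\cos(\Sigma+y_j)\sin y_k-\cos(\Sigma+y_k)\sin y_j=\cos\Sigma\,\sin(y_k-y_j)$, and the resulting sign equivalences $y\in\Delta_{jk}\Leftrightarrow\h(y)\in D_{jk}$ on $\inter C^+$, combined with the diffeomorphism statement of Proposition \ref{imh} and with $\h\circ\iota=\h$, do give $\h(\mathcal W_J\cap\inter C)=\mathcal V_J\cap\inter\mathcal H$. (Note that the present paper states Lemma \ref{hNbhd} without proof, quoting \cite{lag_pants}, so your argument is judged on its own merits.)

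The genuine gap is in your boundary step. With the definitions as given, $\tilde{\mathcal W}_J=\pi^{-1}(\mathcal W_J)$ and the fibre of $\pi$ over a vertex is the whole $\RP^{n}$; by Proposition \ref{imh} the full fibre $\pi^{-1}(p_l)$ maps \emph{onto} the whole hypersurface $\mathcal S_l$, not into $\mathcal S_l\cap\mathcal V_J$. Concretely, for $n=1$ and $J=\{1\}$ the vertex $p_0$ lies in $\mathcal W_J$, and along a ray $y=r\theta$ into $p_0$ one has $\h(y)\to\tfrac12\bigl(\sqrt{\theta_2/\theta_1},\sqrt{\theta_1/\theta_2}\bigr)$, which sweeps out all of $\mathcal S_0$ as $\theta$ varies, including points with $x_1<x_2$ that are outside $\mathcal V_{\{1\}}$. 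So your claim that ``each blown-up fibre $\pi^{-1}(p_l)$ maps into $\mathcal S_l\cap\mathcal V_J$ by continuity'' fails for the total preimage: continuity only controls the points of the fibre that are limits of points of $\inter\mathcal W_J$ (the proper transform), and you must work with that subset, and then separately prove the converse at the boundary, namely that this arc of the fibre maps onto all of $\mathcal S_l\cap\mathcal V_J$ (e.g.\ via the asymptotics of $\h$ along rays into the vertex, as above). Your closing ``density of $\mathcal V_J\cap\inter\mathcal H$ plus continuity force equality'' is also not a proof by itself, because $\tilde{\mathcal W}_J$ is non-compact: you need a properness input, for instance Lemma \ref{hnear_faces} or Corollary \ref{h_near_bndry}, to rule out that preimages of a bounded convergent sequence escape towards the open facets of $C$ (where components of $\h$ are unbounded), so that they subconverge inside the closed set $\tilde{\mathcal W}_J$. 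Finally, the sentence about ``the remaining boundary of $\mathcal W_J^+$ where $F=0$ on facets of $C^+$'' is vacuous: $C$ contains only interior points and vertices, the facets are not in the domain, and $\h$ does not extend to them.
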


The following lemma describes the behavior of $\h$ near the boundary of $C^+$.
\begin{lem} \label{hnear_faces} 
Let $E_J$ be a face of $C$ of codimension $1 \leq |J| \leq n$ and let $\{ q_\ell \}$ be a sequence of points of $C$ which converges to a point in $\inter{E_J}$. Then we have the following behavior of $\h$. If $p_0$ is a vertex of $E_J$ (i.e. $0 \notin J$) then
\[ \begin{split}
          \lim h_{j}(q_\ell) &= + \infty \quad \forall j \in J,  \\
          \lim h_{k}(q_{\ell}) & = 0 \quad \forall k \notin J \cup \{ 0 \}.
   \end{split}
\]
If $p_0$ is not one of the vertices of $E_J$ (i.e. $0 \in J$), then for all $i \notin J$ we have 
\[ \begin{split}
          \lim h_{i}(q_\ell) &= - \infty,  \\
          \lim h_{j}(q_{\ell})- h_{i}(q_{\ell}) & = + \infty \quad \forall j \in J - \{ 0 \}, \\
          \lim h_{k}(q_{\ell})- h_{i}(q_{\ell}) & = 0 \quad \forall k \notin J. 
   \end{split}
\]
\end{lem}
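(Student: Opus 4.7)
The plan is to unfold $\h=\nabla F$ from the explicit formula $F=G^{1/(n+1)}$ with $G=\cos\bigl(\sum_l y_l\bigr)\prod_l\sin y_l$ on $C^+$, carry out the asymptotic analysis directly when $0\notin J$, and reduce the case $0\in J$ to that one via the symmetry of Lemma \ref{eq_action}.

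First I would record
\[ F_{y_k}=\frac{1}{n+1}\,G^{-n/(n+1)}\,G_{y_k},\qquad G_{y_k}=-\sin\bigl(\textstyle\sum y_l\bigr)\prod_l\sin y_l+\cos\bigl(\textstyle\sum y_l\bigr)\cos y_k\prod_{l\neq k}\sin y_l. \]
Near a point $q^*\in\inter E_J$ with $0\notin J$, the factors $A=\cos(\sum y_l)$ and $Q=\prod_{k\notin J\cup\{0\}}\sin y_k$ stay bounded away from $0$, while $\sin y_j\sim y_j\to 0$ for each $j\in J$. Substituting these approximations into the two summands of $G_{y_k}$ yields the leading behaviors
\[ G\sim AQ\prod_{j\in J}y_j,\quad G_{y_j}\sim AQ\prod_{j'\in J,\,j'\neq j}y_{j'}\ (j\in J),\quad G_{y_k}\sim \frac{\cos\bigl(\sum y_l+y_k\bigr)}{\sin y_k}\,Q\prod_{j\in J}y_j\ (k\notin J\cup\{0\}). \]
Plugging in and simplifying, $F_{y_j}\sim F/((n+1)y_j)\to+\infty$ for $j\in J$, and $F_{y_k}\sim\mathrm{const}\cdot\bigl[\prod_{j\in J}y_j\bigr]^{1/(n+1)}\to 0$ for $k\notin J\cup\{0\}$, which settles the first case of the lemma.

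For the second case $0\in J$, pick any $k\notin J$ and let $J_0=R_k(J)=(J\setminus\{0\})\cup\{k\}$, which does not contain $0$. By Lemma \ref{eq_action}, $\h(R_k y)=R_k^*\h(y)$, and a sequence $q_\ell\to q^*\in\inter E_J$ corresponds under the involution $R_k$ to $R_k q_\ell\to R_k q^*\in\inter E_{J_0}$. Applying the first case to this transformed sequence gives $h_j(R_k q_\ell)\to+\infty$ for $j\in J_0$ and $h_l(R_k q_\ell)\to 0$ for $l\notin J_0\cup\{0\}$. The explicit formula $R_k^*(x_1,\ldots,x_{n+1})=(x_1-x_k,\ldots,-x_k,\ldots,x_{n+1}-x_k)$ (with $-x_k$ in position $k$) then produces, via $h_l(q_\ell)=h_l(R_k q_\ell)-h_k(R_k q_\ell)$ for $l\neq k$ and $h_k(q_\ell)=-h_k(R_k q_\ell)$, exactly the three claimed limits: $h_i\to-\infty$ (directly at position $k$, and for $l\neq k$ from $0-\infty$), $h_j-h_i\to+\infty$ for $j\in J\setminus\{0\}$ (the difference collapses to $h_j(R_k q_\ell)-h_i(R_k q_\ell)$ when $i\neq k$, and to $h_j(R_k q_\ell)$ when $i=k$, both $\to+\infty$), and $h_k-h_i\to 0$ for $k,i\notin J$ (the $h_k(R_k q_\ell)$ contributions cancel).

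The main obstacle will be keeping the rate-dependent asymptotics in the first case uniform when the $y_j$ for different $j\in J$ tend to $0$ at incommensurate rates. The identity $F_{y_j}\sim F/((n+1)y_j)$ is the clean device: it reduces the blow-up to the behavior of the scalar $F$ against the single variable $y_j$, and after the symmetry step the delicate differences $h_j-h_i$ in the $0\in J$ case are rewritten as one-variable statements on $R_k q_\ell$, bypassing the need to track products of rates directly.
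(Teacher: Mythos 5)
Your overall strategy (write $\h$ out explicitly from \eqref{Fglob}, do the asymptotics on $C^+$, and transport the case $0\in J$ back to the case $0\notin J$ via $\h\circ R_k=R_k^*\circ\h$ from Lemma \ref{eq_action}) is reasonable, and several pieces are correct: the computation $G_{y_k}=\cos\bigl(\sum_l y_l+y_k\bigr)\prod_{l\neq k}\sin y_l$, the conclusion $h_k\to 0$ for $k\notin J\cup\{0\}$, the full first case when $|J|=1$, and the bookkeeping in the $R_k$-reduction are all fine. (Note also that this paper does not prove the lemma at all --- it is recalled from the companion article without proof --- so there is no in-paper argument to compare against.)

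However, there is a genuine gap at the central claim of the first case when $|J|\geq 2$. Your asymptotic $F_{y_j}\sim F/\bigl((n+1)y_j\bigr)$ is an indeterminate ratio: both $F$ and $y_j$ tend to $0$, and since $F\approx c\bigl(\prod_{j'\in J}\sin y_{j'}\bigr)^{1/(n+1)}$ near $\inter E_J$, whether $F/y_j\to+\infty$ depends on the relative rates at which the coordinates $y_{j'}$, $j'\in J$, vanish. Taking logarithms, you need $-\log\sin y_j+\tfrac{1}{n+1}\sum_{j'\in J}\log\sin y_{j'}\to+\infty$ for every $j\in J$; this holds when the rates are comparable (since $|J|\leq n<n+1$), but not in general. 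Concretely, for $n=2$, $J=\{1,2\}$, and the sequence $y_1=\epsilon$, $y_2=\epsilon^{10}$, $y_3=\pi/4$, one gets $h_1=\tfrac13\cos(2y_1+y_2+y_3)\sin y_2\sin y_3\cdot G^{-2/3}\sim c\,\epsilon^{10-22/3}=c\,\epsilon^{8/3}\to 0$, so the very quantity you claim tends to $+\infty$ tends to $0$ along this sequence, even though it converges to a point of $\inter E_{\{1,2\}}$ and lies in $C^+$. Your closing paragraph acknowledges the incommensurate-rate problem, but the ``clean device'' you propose is exactly this same indeterminate ratio and does not resolve it; and since your second case is reduced to the first, the claims $h_j-h_i\to+\infty$ inherit the same gap. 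Any complete argument must either control how the sequence approaches the face (comparable rates, or confinement to the regions the paper actually works with in its quantitative lemmas) or interpret/restrict the statement in codimension at least $2$; as written, the proposal does not establish it.
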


\begin{cor} \label{h_near_bndry} If $\{ q_k \}$ is a sequence of points of $C$ which converges to a point on the boundary of $C$, then either $\{ \h(q_k) \}$ converges to a point on the boundary of $\mathcal H$ or $\lim_{k \rightarrow +\infty} || h(q_k)|| = + \infty$. 
\end{cor}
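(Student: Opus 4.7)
The plan is to split the analysis according to whether the limit point $q_\infty \in \partial C$ lies in the relative interior of a face $E_J$ with $1 \leq |J| \leq n$ or is one of the vertices $p_j$ (the case $|J| = n+1$), and apply Lemma \ref{hnear_faces} in the former case and the blow-up construction of $\tilde C$ in the latter.

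First, if $q_\infty$ lies in the relative interior of $E_J$ with $1 \leq |J| \leq n$, I would apply Lemma \ref{hnear_faces} directly to $\{q_k\}$: if $0 \notin J$, then for any $j \in J$ one has $h_j(q_k) \to +\infty$, while if $0 \in J$, then for any $i \notin J$ one has $h_i(q_k) \to -\infty$. In either case $\|\h(q_k)\| \to +\infty$, which is the second alternative of the corollary.

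Next, suppose $q_\infty = p_j$ is a vertex. I would pass to the blow-up $\tilde C$. After discarding finitely many terms one may assume $q_k \neq p_j$ for all $k$, so each $q_k$ lifts uniquely to a point $\tilde q_k \in \tilde C \setminus \bigcup_{\ell} \pi^{-1}(p_\ell)$. Since $q_k \to p_j$ in $C$, for large $k$ the lifts $\tilde q_k$ lie in a compact neighborhood of $\pi^{-1}(p_j) \cong \RP^n$ in $\tilde C$; extracting a subsequence we get $\tilde q_k \to \tilde q_\infty \in \pi^{-1}(p_j)$. By Lemma \ref{smoothExt} the map $\h$ extends continuously to $\tilde C$, so $\h(q_k) = \h(\tilde q_k) \to \h(\tilde q_\infty)$ along this subsequence, and by Proposition \ref{imh} the limit $\h(\tilde q_\infty)$ lies on $\mathcal S_j \subset \partial \mathcal H$. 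The same argument applied to any subsequence shows that every accumulation point of $\{\h(q_k)\}$ lies on $\partial \mathcal H$, which is the first alternative of the corollary.

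The main obstacle I anticipate is essentially bookkeeping: the genuine work has already been done in Lemma \ref{hnear_faces} and in the smooth extension of $\h$ to $\tilde C$ from Lemma \ref{smoothExt}. One small subtlety worth checking is that $\tilde C$ is not globally compact (only a neighborhood of $\bigsqcup_\ell \pi^{-1}(p_\ell)$ is), so the subsequential-limit argument in the vertex case must be performed inside a compact neighborhood of the single fiber $\pi^{-1}(p_j)$; this is immediate from the local model $\tilde U_{p_j} \to U_{p_j}$ of the real blow-up and the fact that $q_k \to p_j$.
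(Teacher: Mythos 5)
Your splitting of the boundary into non-vertex face points and vertices is the right structure, and the first case is fine: a non-vertex boundary point lies in the relative interior of a unique face $E_J$ with $1\leq |J|\leq n$, and Lemma \ref{hnear_faces} immediately gives $\|\h(q_k)\|\to+\infty$ there. (The paper itself states the corollary without proof, as a direct consequence of that lemma together with the structure of $\h$ at the vertices, so this part is exactly the intended argument.)

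The vertex case, however, has a genuine gap: the compactness claim is false. The exceptional fibre $\pi^{-1}(p_j)$ is not a copy of $\RP^{n}$ sitting compactly inside $\tilde C$; whatever way Definition \ref{blup} is parsed, the fibre on which $\h$ is defined and finite cannot be compact, because by Proposition \ref{imh} its image under the continuous map $\h$ is the unbounded hypersurface $\mathcal S_j$. The points of the fibre correspond only to lines meeting the tangent cone of $C$ at $p_j$ in interior directions, and there is no compact neighborhood of this fibre in $\tilde C$. Consequently your lifted sequence $\tilde q_k$ need not subconverge at all: if the secant directions $q_k-p_j$ degenerate towards a boundary face of $C$, the lifts leave every compact subset of $\tilde C$, and your continuity argument produces nothing. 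For such sequences one must prove divergence separately, either by an estimate in the spirit of Lemma \ref{hnear_faces} or directly from \eqref{Fglob}; e.g.\ for $n=1$ near $p_0$ one has $\h(y)\approx \tfrac12\bigl(\sqrt{y_2/y_1},\sqrt{y_1/y_2}\bigr)$, which stays on a bounded piece of $\mathcal S_0$ only as long as the ratio $y_2/y_1$ stays in a compact subset of $(0,\infty)$ and blows up otherwise. So the vertex case itself requires a dichotomy (directions staying in a compact set of interior directions versus degenerating directions), and only in the first subcase does your argument via Lemma \ref{smoothExt} and Proposition \ref{imh} apply. A smaller point: even where it applies, "every accumulation point of $\h(q_k)$ lies on $\partial\mathcal H$" is weaker than the stated alternative that the sequence converges to a single boundary point, so the conclusion should be organized as a statement about accumulation points (on $\partial\mathcal H$ or at infinity) rather than asserted as literal convergence.
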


We also have

\begin{prop}  \label{pairpants_PLft} The Lagrangian pair of pants $\Phi(\tilde C)$ is homeomorphic to the $PL$-lift $\hat \Gamma$ of $\Gamma$. 
\end{prop}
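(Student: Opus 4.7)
The plan is to construct an explicit homeomorphism $\Psi \colon \hat \Gamma \to L$ by exploiting the common combinatorial decomposition of both sides. Since $\Phi\colon \tilde C \to L$ is an embedding (Lemma~\ref{smoothExt}), this reduces to producing a homeomorphism $\tilde C \cong \hat \Gamma$. On the $\hat \Gamma$ side there is the canonical decomposition $\hat\Gamma = \bigcup_{|J|\leq n}\hat\Gamma_J$ with $\hat\Gamma_J = \Gamma_J \times E_J$; on the $L$ side, the corresponding pieces are the star-neighborhood images $\Phi(\tilde{\mathcal W}_J)$ from \S\ref{Imh}, indexed by the same $J \subsetneq \{0,\ldots,n+1\}$ with $|J|\leq n$. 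By Lemma~\ref{hNbhd}, $\h(\tilde{\mathcal W}_J) = \mathcal V_J \cap \mathcal H$, where $\mathcal V_J$ from \eqref{h_nbhoods} is a neighborhood of the cone $\Gamma_J$ in $M_{\R}$. Hence $\Phi(\tilde{\mathcal W}_J) \subset (\mathcal V_J \cap \mathcal H) \times \mathcal W_J$ sits in the same slab of $M_\R \times T$ as $\hat \Gamma_J \subset \mathcal V_J \times \mathcal W_J$, providing a natural combinatorial matching of pieces.

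The heart of the construction is an asymptotic analysis on each piece, provided by Lemma~\ref{hnear_faces}: as $q \in \tilde C$ approaches the relative interior of $E_J$, the image $\h(q)$ escapes to infinity inside the cone $\Gamma_J$ (coordinates along $\Gamma_J$ diverge while transverse components tend to $0$) and $\pi(q)$ approaches a point of $\mathcal W_J$. This matches asymptotically the product structure of $\hat \Gamma_J = \Gamma_J \times E_J$. Concretely, I would fix an affine retraction $r_J \colon \mathcal V_J \cap \mathcal H \to \Gamma_J$ along a complement of the linear span of $\Gamma_J$, and use it together with the identity on the torus factor to define a local homeomorphism $\Psi_J$ between a neighborhood of $\hat \Gamma_J$ in $\hat \Gamma$ and $\Phi(\tilde{\mathcal W}_J)$ in $L$. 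At the ``core'' $J = \emptyset$, Proposition~\ref{imh} supplies a diffeomorphism $\h \colon \inter C^\pm \to \inter \mathcal H$, and the straight-line isotopy $(\h(y),y)\mapsto(0,y)$ identifies the body of $L$ with $\{0\}\times \inter C^\pm \subset \hat \Gamma_\emptyset$.

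The global $\Psi$ is obtained by $\mathcal G$-equivariant gluing. The group $\mathcal G$ acts on $\hat \Gamma$ via the $\mathcal R_k$'s of \eqref{glob_symm} (permuting cones according to \eqref{action_cones}) and on $L$ by Lemma~\ref{eq_action}; moreover, by Lemma~\ref{tnbhd} it carries $\hat \Gamma_J \leftrightarrow \hat \Gamma_{R_k J}$ and $\Phi(\tilde{\mathcal W}_J) \leftrightarrow \Phi(\tilde{\mathcal W}_{R_k J})$. Choosing the retractions $r_J$ and the interpolating isotopies $\mathcal G$-equivariantly (which is possible because the $\mathcal R_k$'s are affine symplectomorphisms) forces the local $\Psi_J$'s to agree on overlaps $\hat \Gamma_J \cap \hat \Gamma_{J'}$, so they patch to a continuous bijection with continuous inverse.

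The main obstacle is extending the asymptotic match from the ends of $\Phi(\tilde{\mathcal W}_J)$ to the entire piece. Near the centre (small $|\h(q)|$), $\Phi(\tilde{\mathcal W}_J)$ has no obvious product structure, and one must interpolate by cut-off between the ``amoeba'' structure near the origin and the product structure near the ends. The delicate point is matching the image of the exceptional preimage $\pi^{-1}(p_k) \cong \RP^n$ --- which under $\h$ maps onto the boundary hypersurface $\mathcal S_k$ by Proposition~\ref{imh} --- with the appropriate sub-cone structure of $\hat \Gamma_{J_k}$. This is where Proposition~\ref{imh} enters most heavily, so the rigorous verification proceeds dimension by dimension (the cases $n \leq 2$ being those fully covered by the earlier results), although the combinatorial structure of the construction is uniform in $n$.
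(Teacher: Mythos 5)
This proposition is stated here without proof (Section 3 explicitly recalls it from \cite{lag_pants} ``without proofs''), so there is no in-paper argument to compare against; judged on its own terms, your outline identifies the right ingredients but has genuine gaps precisely where the work lies. First, the local maps $\Psi_J$ are not well defined as described: sending a point $\Phi(q)=(\h(q),\pi(q))$ of $\Phi(\tilde{\mathcal W}_J)$ to $(r_J(\h(q)),\pi(q))$, with $r_J$ an affine retraction onto $\Gamma_J$ and the identity on the torus factor, produces points of $\Gamma_J\times\mathcal W_J$, which is not contained in $\hat\Gamma$ --- the torus coordinate of $\hat\Gamma_J$ must lie in $E_J$, not merely in its star $\mathcal W_J$, and a neighborhood of $\hat\Gamma_J$ inside $\hat\Gamma$ also contains points lying over the adjacent larger cones. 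The tool the paper provides for exactly this purpose is the face-projection machinery of \S\ref{projections}: Proposition \ref{faceproj} and Corollary \ref{submers} give on $\tilde U_{J,L}$ genuine product coordinates $\gb_{J,L}=(\y_{J,L},\h_{J,L})$ with values in $\tilde E_J\times V_J$, which turn the ends of the pair of pants into products matching $\Gamma_J\times E_J$; your proposal never invokes these, and an affine retraction in the base alone cannot replace them.

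Second, the claim that choosing the data $\mathcal G$-equivariantly ``forces the local maps to agree on overlaps'' is not correct: $\mathcal G$ only relates pieces in the same orbit, whereas the overlaps that must be controlled are between nested pieces (the core $J=\emptyset$ against an end, or $J\subset J'$), and there equivariance gives nothing. What is needed is a compatibility across strata, in the spirit of the compatible systems of projections of \S 4.3, together with an explicit interpolation between the identification of the core via the diffeomorphism $\h:\inter C^{\pm}\to\inter\mathcal H$ of Proposition \ref{imh} and the product identifications of the ends. You yourself flag this interpolation, and the matching along the exceptional sets, as ``the main obstacle'' and leave both open; but these are the actual content of the proposition, so what you have is a plan rather than a proof. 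A further small point: $\pi^{-1}(p_k)$ cannot be all of $\RP^{n}$ if $\h$ is to extend continuously (its image is the noncompact hypersurface $\mathcal S_k$); it is the cell of directions entering the tangent double cone of $C$ at $p_k$, and the identification of these cells with the corner structure of $\hat\Gamma$ --- the step where the restriction to $n\le 2$ via Proposition \ref{imh} really enters --- is exactly the part that still has to be carried out.
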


\section{Projections to faces and Legendre transform} \label{projections} 
Projections onto faces were introduced in \cite{lag_pants}, where the most important result, Proposition \ref{faceproj}, was proved. The only new input is the definition of compatible system of projections.
 
\subsection{The projections}

\begin{defi} \label{projFace1} Given a face $E_J$ of $C$ of codimension $1 \leq|J| \leq n$, let $L \subseteq N_{\R}$ be a vector subspace of dimension $|J|$ which is transversal to $E_J$. Let $U_{J, L}$ be the set of points $y \in \inter C$ such that there exists a $y' \in \inter E_J$ such that $y-y' \in L$. If such a $y'$ exists, it is unique by transversality. Thus we can define the projection 
\[ 
\begin{split}
 \y_{J, L}: U_{J, L} & \rightarrow \inter E_{J} \\
                        y & \mapsto y'.
 \end{split}
 \]
Recall that $\{ p_k \}_{k \notin J}$ is the set of vertices of $E_J$. Define 
\[ \tilde U_{J,L} = \pi^{-1}(U_{J,L} \cup \{ p_k \}_{k \notin J})  \subseteq \tilde C\]
Then $\y_{J,L}$ extends to a map $\y_{J,L}: \tilde U_{J,L} \rightarrow \tilde E_{J}$. 
\end{defi}

Dually we give the following definition.  

\begin{defi} \label{projFace2} Let $\Gamma_J$ be a face of $\Gamma$. Recall that we denoted by $V_J$ the smallest subspace containing $\Gamma_J$. Let $L$ be as in Definition \ref{projFace1}. Define 
\[ L^{\perp} = \{ x \in M_{\R} \, | \, \inn{x}{y} = 0 \ \forall y \in L \} \]
Then $L^{\perp}$ has dimension $n+1-|J|$ and it is transversal to $V_J$. It thus defines the projection   $\x_{J,L}: M_{\R}  \rightarrow V_J$, dual to $\y_{J,L}$, whose fibres are parallel to $L^{\perp}$.  
\end{defi}

Given a face $E_J$ of $C$, let $T_J$ be the smallest subtorus of $T$ which contains $E_J$. By construction $V_J \times T_J$ is a Lagrangian submanifold of $M_{\R} \times T$.  Given $L$ and $L^{\perp}$ as in Definitions  \ref{projFace1} and  \ref{projFace2}, the space $(V_J \times T_J) \times (L^{\perp} \times L)$ is naturally a covering of $M_{\R} \times T$ and thus induces from the latter a symplectic form. We have the following

\begin{lem} \label{cotangent} The choice of a vector subspace $L$ as in Definition \ref{projFace1} induces a natural (linear) symplectomorphism between the cotangent bundle of $V_J \times T_J$ and $(V_J \times T_J) \times (L^{\perp} \times L)$.
\end{lem}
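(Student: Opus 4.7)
The statement is essentially a linear-symplectic identification, so the plan is to split $M_{\R}$ and $N_{\R}$ using $L$ and $L^{\perp}$, use the duality pairing to identify $L$ and $L^{\perp}$ with the relevant dual spaces, and then check that the symplectic form pulled back to the cover decomposes as a sum of canonical Liouville forms. First I would verify that $V_J \times T_J$ really is Lagrangian. Write $W_J \subset N_{\R}$ for the tangent space to $T_J$, so that $\dim V_J + \dim W_J = n+1$. Using the explicit generators recalled in \S\ref{stCoam} and \S\ref{stTrop} together with the pairing \eqref{dual_basis}, a short case distinction on whether $0 \in J$ shows that $\inn{V_J}{W_J} = 0$, so $V_J \times T_J$ is isotropic, and hence Lagrangian by dimension count.

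Next I would set up the canonical identifications. Transversality of $L$ to $E_J$ gives $N_{\R} = W_J \oplus L$, and dually $M_{\R} = V_J \oplus L^{\perp}$. The restriction of $\inn{\cdot}{\cdot}$ to $V_J \times L$ is nondegenerate: if $v \in V_J$ annihilated $L$, then by the previous step it would also annihilate $W_J$, hence all of $N_{\R}$, forcing $v = 0$. This yields a canonical isomorphism $V_J^{*} \cong L$, and symmetrically $W_J^{*} \cong L^{\perp}$. Combining,
\[ T^{*}(V_J \times T_J) = (V_J \times T_J) \times (V_J^{*} \times W_J^{*}) \;\cong\; (V_J \times T_J) \times (L^{\perp} \times L) \]
as linear bundles, which is the identification claimed. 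The construction is manifestly linear and depends only on $L$.

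The content-bearing step is to check that this bundle isomorphism is symplectic. For this I would decompose a tangent vector to the cover as $(v, w, l^{\perp}, l) \in V_J \oplus W_J \oplus L^{\perp} \oplus L$ and evaluate $\omega(m \oplus n, m' \oplus n') = \inn{m}{n'} - \inn{m'}{n}$ on two such vectors, splitting $m = v + l^{\perp}$ and $n = w + l$. The pairings $\inn{V_J}{W_J}$ and $\inn{L^{\perp}}{L}$ vanish by construction, so only the mixed contributions $\inn{V_J}{L}$ and $\inn{L^{\perp}}{W_J}$ survive; under the identifications $V_J^{*} \cong L$ and $W_J^{*} \cong L^{\perp}$ these are precisely the canonical Liouville two-forms on $T^{*}V_J$ and $T^{*}T_J$, whose sum is the canonical symplectic form on $T^{*}(V_J \times T_J)$.

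I do not foresee any serious obstacle: the content is pure linear algebra, and the only care required is the bookkeeping of which factor is paired with which dual, together with a sign check in the Liouville form. The fact that $T_J$ is a subtorus rather than a vector space is harmless, because the symplectic form on $T^{*}T$ is the flat one descended from $M_{\R} \oplus N_{\R}$, so the whole identification passes to the quotient without modification.
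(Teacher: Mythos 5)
Your proposal is correct and follows essentially the same route as the paper: the paper's (very terse) proof consists exactly of identifying $L^{\perp} \times L$ with the cotangent fibre of $V_J \times T_J$ via the duality pairing, i.e. sending $(\ell', \ell)$ to the form $(v,w) \mapsto \inn{\ell}{v} - \inn{\ell'}{w}$, with the sign chosen so that the symplectic forms match. Your additional verifications (that $V_J \times T_J$ is Lagrangian, that the pairings $V_J \times L$ and $W_J^{} \times L^{\perp}$ are nondegenerate, and the explicit computation of $\omega$ on the decomposed tangent vectors) simply spell out what the paper leaves implicit.
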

 
\begin{proof} This is just linear algebra. In fact $L^{\perp} \times L$ can be naturally identified with a cotangent fibre of $V_J \times T_J$ by sending the pair $(\ell', \ell) \in L^{\perp} \times L$ to the linear form $(v, w) \mapsto \inn{\ell}{v} - \inn{\ell'}{w}$, where $v$ is a tangent vector in $V_J$ and $w$ in $T_J$. The signs in this identification are chosen in order to match the symplectic forms. 
\end{proof}
 Given $L$, $U_{J,L}$ and $\tilde U_{J,L}$ as above,  define   $\h_{J,L}: \tilde U_{J,L} \rightarrow V_{J}$ to be the map  
\begin{equation*}  
                  \h_{J,L} = \x_{J,L} \circ \h  
\end{equation*}
and $\gb_{J,L}: \tilde U_{J,L} \rightarrow  V_{J} \times \tilde{E}_J$ to be 
\begin{equation} \label{mapg}
                  \gb_{J,L} = ( \y_{J,L}, \, \h_{J,L} ).
\end{equation}

\subsection{Projections and Legendre transform}

\begin{prop} \label{faceproj} Assume $n=1$ or $2$. The map $\gb_{J,L}: \tilde U_{J,L} \rightarrow \tilde{E_J} \times V_{J}$ is a diffeomorphism onto the open subset $Z_{J,L} = \gb_{J,L}(\tilde U_{J,L}) \subseteq  \tilde{E_J} \times V_{J}$. Moreover, via the identification of the cotangent bundle of $V_J \times T_J$ with (a covering of) $M_{\R} \times T$ given in Lemma \ref{cotangent}, $\Phi( \tilde U_{J,L})$ is the graph of an exact one form over  $Z_{J,L}$ obtained as the differential of a Legendre transform of $F$.
\end{prop}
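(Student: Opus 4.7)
The plan is to exhibit the statement as a partial Legendre transform of $F$ in the $L$-direction, enabled by the strict concavity of $F$ (Corollary~\ref{HessFneg}). The starting observation is that the tangent space of $E_J$ coincides with the annihilator of $V_J$, so one has complementary decompositions $N_\R = L \oplus T(E_J)$ and $M_\R = L^\perp \oplus V_J$. In these coordinates $\y_{J,L}$ becomes the projection onto $T(E_J)$ and, via the natural identification $L^* \cong V_J$ coming from the pairing, $\h_{J,L}=\x_{J,L}\circ\h$ is the fiberwise gradient of $F$ along the $L$-slices.

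First I will show $\gb_{J,L}$ is a local diffeomorphism on $U_{J,L}\cap \inter C^+$. Writing a tangent vector as $v = v^E + v^L$, the differential is $d\gb_{J,L}(v) = (v^E,\,\x_{J,L}(\hes F \cdot v))$; if it vanishes then $v\in L$ and $\hes F(v, w) = 0$ for all $w \in L$, so taking $w = v$ and invoking Corollary~\ref{HessFneg} forces $v = 0$. Global injectivity on $\inter C^+$ follows because every fiber $(y' + L)\cap\inter C^+$ is convex and $F$ is strictly concave there, so its gradient, which is precisely $\h_{J,L}$ restricted to the fiber, is injective. Since $\iota$ commutes with $\y_{J,L}$ and negates $F$, the images of the $C^+$- and $C^-$-parts of $\gb_{J,L}$ land respectively in $V_J\times\inter E_J^+$ and $V_J\times\inter E_J^-$, so they cannot collide. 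The extension to $\tilde U_{J,L}$ is then obtained from Lemma~\ref{smoothExt} (for $\h$) together with the fact that $\y_{J,L}$ is linear and descends to the real blow-up by sending a line $\ell$ at a vertex $p_k$ of $E_J$ to its $T(E_J)$-component; openness of the image $Z_{J,L}$ follows from the inverse function theorem.

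Next I will exhibit the Legendre transform. Using the identification of Lemma~\ref{cotangent} with the splittings above, define on $Z_{J,L}\cap(V_J \times \inter E_J^+)$
\begin{equation*}
  \tilde F(x, y') \;=\; \inn{x}{y^L(x, y')} \;-\; F\bigl(y' + y^L(x,y')\bigr),
\end{equation*}
where $y^L(x,y') \in L$ is uniquely determined by $\x_{J,L}(\nabla F(y' + y^L)) = x$. A direct computation yields $\partial_x \tilde F = y^L$ and $\partial_{y'} \tilde F = -\,\x_{J,L}^c(\nabla F)$, with $\x_{J,L}^c$ the complementary projection onto $L^\perp$; the sign agrees with the one in Lemma~\ref{cotangent}, and together these identities show that $\Phi(\tilde U_{J,L}\cap\pi^{-1}(\inter C^+))$ is precisely the graph of the exact one-form $d\tilde F$. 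The $C^-$-side is handled by $\iota$-symmetry.

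The hard part will be verifying that this whole picture extends smoothly across the exceptional divisors $\pi^{-1}(p_k)$ for vertices $p_k$ of $E_J$: while $\h$ and hence $\h_{J,L}$ extend smoothly by Lemma~\ref{smoothExt}, one has to show that $\tilde F$ (and not merely its differential) admits a smooth extension to all of $Z_{J,L}$ and that $\gb_{J,L}$ remains a diffeomorphism right up to these blown-up loci. The restriction $n\le 2$ enters the proof only through the appeal to Corollary~\ref{HessFneg}.
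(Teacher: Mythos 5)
This paper does not actually prove Proposition \ref{faceproj} (it is quoted from \cite{lag_pants}), so I can only judge your argument on its own terms; your skeleton---read $\h_{J,L}$ as the fibrewise differential of $F$ along the $L$-slices, get local invertibility and fibrewise injectivity of $\gb_{J,L}$ from Corollary \ref{HessFneg}, and produce the generating function as a partial Legendre transform with the sign convention of Lemma \ref{cotangent}---is the right mechanism, and it is exactly the one the paper re-uses later (proof of Lemma \ref{glued_proj}). But there are two genuine gaps. The first is your separation of the two sheets: the fact that $\iota$ commutes with $\y_{J,L}$ and negates $F$ does \emph{not} imply that points of $\inter C^+$ project into $\inter E_J^+$. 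Take $n=1$, $J=\{1\}$, $L=\spn\{u_1^*+u_2^*\}$ (perfectly transversal to $E_J$): the point $y=(0.4,0.1)\in\inter C^+$ has its unique projection $(0,-0.3)\in\inter E_J^-$. Worse, over this base point both a $C^+$-fibre and a $C^-$-fibre of $\y_{J,L}$ are nonempty, and by Lemma \ref{hnear_faces} the quantity $\h_{J,L}=h_1+h_2$ runs over all of $\R$ along each of them (it tends to $+\infty$ at the end hitting $\inter E_1$ or $\inter E_2$ and to $-\infty$ at the end hitting $\inter E_0$), so the two sheets genuinely collide and cannot be told apart by their $\gb_{J,L}$-values. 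Concavity on each half is therefore not enough: for the statement to hold, the domain $\tilde U_{J,L}$ must be understood so that each half of $C$ projects to the corresponding half of $E_J$ (as happens for orthogonal-type projections), or one must invoke the finer structure of $\h$ (Lemmas \ref{hNbhd}, \ref{hnear_faces}) to exclude cross-sheet coincidences; your one-line symmetry remark does not carry this step.

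The second gap you flag yourself but do not close: the proposition is about all of $\tilde U_{J,L}$, which contains the exceptional loci $\pi^{-1}(p_k)$, $k\notin J$. Showing that $\gb_{J,L}$ stays an immersion and injective up to and including these loci, and that the generating function (not merely the one-form $d\tilde F$, which you control via Lemma \ref{smoothExt}) extends there, is not a loose end but the substantive analytic content of the statement; it requires the description of $\h$ on $\pi^{-1}(p_k)$ (Proposition \ref{imh}, itself only available for $n\le 2$), so the restriction on $n$ does not enter solely through Corollary \ref{HessFneg}. As written, your proposal establishes the claim only over $\inter C^+\cup\inter C^-$, and even there only modulo the two-sheet issue above.
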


\begin{cor} \label{submers} The map $\h_{J,L}: \tilde U_{J,L} \rightarrow  V_{J}$ is a submersion. The fibres of $\h_{J,L}$ can be identified with open subsets of $\tilde E_{J}$ via the map $\y_{J,L}$. 
\end{cor}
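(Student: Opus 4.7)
The plan is to deduce both statements directly from Proposition \ref{faceproj}, essentially as formal consequences of the fact that $\gb_{J,L} = (\y_{J,L}, \h_{J,L})$ is a diffeomorphism onto $Z_{J,L}$.

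First I would observe that the standard projection $\pi_{V_J}: \tilde E_J \times V_J \to V_J$ is a trivial submersion, and hence its restriction to the open subset $Z_{J,L} \subseteq \tilde E_J \times V_J$ is still a submersion onto its image. Since $\h_{J,L} = \pi_{V_J} \circ \gb_{J,L}$ by the definition in \eqref{mapg}, and $\gb_{J,L}$ is a diffeomorphism by Proposition \ref{faceproj}, the composition $\h_{J,L}$ is a submersion as claimed.

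For the second assertion, I would fix $x \in \h_{J,L}(\tilde U_{J,L}) \subseteq V_J$ and write its fibre as $\h_{J,L}^{-1}(x) = \gb_{J,L}^{-1}\bigl( Z_{J,L} \cap (\tilde E_J \times \{x\}) \bigr)$. Because $\gb_{J,L}$ is a diffeomorphism, this fibre maps diffeomorphically onto the open subset $Z_{J,L} \cap (\tilde E_J \times \{x\})$ of $\tilde E_J \times \{x\}$, which is naturally identified with an open subset of $\tilde E_J$ via the projection to the first factor. Since that first-factor projection of $\gb_{J,L}$ is by definition $\y_{J,L}$, restricting $\y_{J,L}$ to $\h_{J,L}^{-1}(x)$ gives the claimed identification.

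There is no real obstacle here: once Proposition \ref{faceproj} is available, the corollary is purely formal, amounting to the observation that projecting a graph onto one of its factors is a submersion and the fibres recover slices of the other factor. The only point that deserves a line of comment is that the hypothesis $n=1$ or $2$ is needed solely because Proposition \ref{faceproj} is only proved in those dimensions; the deduction itself works in any dimension in which that proposition holds.
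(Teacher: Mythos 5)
Your argument is correct and is exactly the intended derivation: the paper states Corollary \ref{submers} as an immediate consequence of Proposition \ref{faceproj} (with no separate proof), namely that composing the diffeomorphism $\gb_{J,L}$ onto the open set $Z_{J,L}$ with the projection to $V_J$ gives a submersion whose fibres are the slices $Z_{J,L} \cap (\tilde E_J \times \{x\})$, identified with open subsets of $\tilde E_J$ via $\y_{J,L}$. Your remark that the dimensional restriction enters only through Proposition \ref{faceproj} is also consistent with the paper.
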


\subsection{Compatible systems of projections} 
\begin{defi} A {\it compatible system of projections} over $C$ is given by a choice of transversal subspaces $L_{J}$ as in Definition \ref{projFace1} for every face $E_J$ of $C$ with the property that if $E_{J_2} \subset E_{J_1}$ then $L_{J_1} \subset L_{J_2}$. In particular this implies that 
\begin{equation} \label{comp_proj}
  \y_{J_2, L_{J_2}}  \circ \y_{J_1, L_{J_1}} =  \y_{J_2, L_{J_2}} \quad \text{and} \quad \x_{J_1, L_{J_1}} \circ \x_{J_2, L_{J_2}} = \x_{J_1, L_{J_1}}.
\end{equation}
For simplicity of notations, once a compatible system of projections is fixed, we will write $\x_{J}$ instead of $\x_{J,L_J}$ and similarly in all other occurrences of this suffix. 
\end{defi}

\begin{ex} Given an inner product on $N_{\R}$ let $L_{J}$ be the orthogonal complement of $T_J$, then clearly this choice for every $E_J$ forms a compatible system of projections.
\end{ex}

Clearly the fact that we have a compatible system of projections implies that whenever $E_{J_2} \subset E_{J_1}$ then the following diagram commutes
\begin{equation} \label{compbndl}
    \begin{tikzcd}
         \tilde U_{J_1} \cap \tilde U_{J_2}  \arrow[rd, "\h_{J_1}"']  \arrow[r,"\gb_{J_2}"] & \tilde E_{J_2} \times V_{J_2}  \arrow[d]\\
         & V_{J_1}
    \end{tikzcd}
\end{equation}
where the vertical arrow is the projection to $V_{J_2}$ followed by the composition with $\x_{J_1}$. In other words, $\h_{J_2}$ restricted to a fibre of $h_{J_1}$ is a submersion over the fibre of $\x_{J_1}$ intersected with $V_{J_2}$. 

\section{Trimming Lagrangian pairs of pants} \label{trimming_Lag}
Our goal is to use Lagrangian pairs of pants as local models for the smoothing of the PL-lifts of tropical hypersurfaces. For this purpose we need to trim off some parts at infinity. We will discuss the cases $n=1$ and $2$. In this section we consider the $\lambda$-rescaled Lagrangian pair of pants $\Phi_{\lambda}(\tilde C)$ as in Definition \ref{LpPants}.  Since we are fixing the rescaling factor, in order to avoid cumbersome notation, we will drop the suffix $\lambda$ from our notations, i.e. we will denote the maps by $\Phi$ and $\h$. We will also continue to denote by $\mathcal H$ the image of $\h$ and by $\mathcal S_k$ the surfaces forming the boundary of $\mathcal H$. So, for instance, the surface $\mathcal S_0$ is now defined by the equation
\begin{equation} \label{bound_amoeba_rescaled}
 \mathcal S_0: \quad  (n+1)^{n+1} x_1 \ldots x_{n+1} = \lambda^{n+1} \ \text{and} \  x_j > 0, \forall j.
\end{equation}
We also fix a compatible system of projections $\{ \y_J \}$ and $\{ \x_J \}$.  

Let $\Gamma_J$ be a cone of $\Gamma$ and $r_J$ a point in the interior of $\Gamma_J$. We have 
\[ r_J = \sum_{j \in J} s_j u_j \] 
for some positive coordinates $s_j$. We will consider the following subsets of $\Gamma_J$ 
\begin{equation} \label{qsets}
 Q_{r_J} = \left \{ u = \sum_{j \in J} t_j u_j \in \Gamma_J \, | \, t_j \geq s_j \ \  \forall j \in J \right \}.
\end{equation}
Define also the following numbers
\[ r_J^+ = \max \{ s_j \}_{j \in J} \quad \text{and} \quad r_J^- = \min \{ s_j \}_{j \in J}. \]

When $\Gamma_J$ is one dimensional, there is a canonical identification of $\Gamma_J$ with $\R_{\geq 0}$ given by $u=t u_j \mapsto t$, therefore we will identify points of $\Gamma_J$ with their coordinate. In particular when $\Gamma_J$ is one dimensional $r_J = r^+_J= r^-_J$. 

\subsection{Trimming the ends over $n$-dimensional cones}  \label{Trimming1} First consider the case when $\Gamma_J$ has dimension $n$.
We want to understand the preimage of $Q_{r_J}$ by $\h_{J}$.  We will estimate its location inside $C$ and prove that for $r_J^-$ large enough, $\h_{J}$ defines a circle bundle (with fibre $\tilde E_J$) over $Q_{r_J}$. These will be the ``ends at infinity'' which we will trim off. 

The idea is the following. Let $\mathcal H_k$ and $\mathcal S_k$ be the sets defined in \S \ref{Imh} and rescaled by $\lambda$ (see discussion above). The cone $\Gamma_J$ is contained in two $n+1$ dimensional cones $\Gamma_{J^+}$ and $\Gamma_{J^-}$ corresponding to the two elements $k^+$ and $k^-$ of $\{0, \ldots, n+1 \}$ which are not in $J$. For every point $x \in Q_{r_J}$ consider the line $\x_{J}^{-1}(x)$. When $r_J$ is large enough the connected component of $\x_{J}^{-1}(x) \cap \mathcal H$ containing $x$ is a closed segment whose endpoints are intersection points of $\x_{J}^{-1}(x)$ with the hypersurfaces $\mathcal S_{k^+}$ and $\mathcal S_{k^-}$, contained respectively in the cones $\Gamma_{J^-}$ and $\Gamma_{J^+}$. The union of all these segments, as $x$ moves in $Q_{r_J}$, together with the map $\x_J$, forms a fibre bundle over $Q_{r_J}$ with fibre the segments. Notice that the preimage of each segment via $\h$ is a circle. This circle is precisely a fibre of $\h_J$.  This situation is evident in the case $n=1$. For instance Figure \ref{amoeba_projections} depicts what happens in the case $J=\{ 1 \}$: for all $r_J > \bar R$ and all $x \in Q_{r_J}$ the line $\x_{J}^{-1}(x)$ intersects both $\mathcal S_0$ and $\mathcal S_2$. The connected component of $\x_{J}^{-1}(x) \cap \mathcal H$ containing $x$ is marked with a thicker continuous line. 

\begin{figure}[!ht] 
\begin{center}
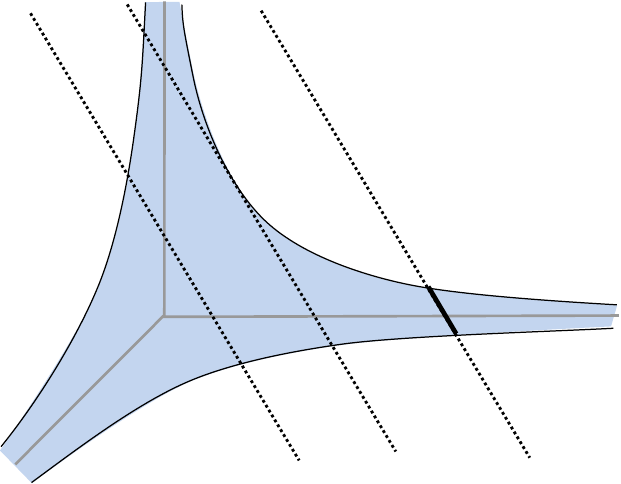
\caption{The sets $\x_J^{-1}(x) \cap \mathcal H$. For the rightmost $x$, $\x_{J}^{-1}(x)$ intersects both $\mathcal S_0$ and $\mathcal S_2$. This does not happen for the leftmost $x$. The behavior changes after $\bar R$.} \label{amoeba_projections}
\end{center}
\end{figure}
\todo{fix Figure \ref{amoeba_projections} (notation changed)}
Since this picture is intuitively clear, we will now only state without proof a few technical lemmas which quantifying more precisely the sentence ``for $r^-_J$ large enough'' and give some estimates on the size of the segments described above. 

\begin{lem} \label{interSdim2} Let $n=1$ and $J = \{ 1 \}$. There exist positive constants $\bar R_J$ and $K_J$, depending only on the projection $\x_J$, such that if  $r_J >  \bar R_J \lambda $, then for every $x = (x_1, 0) \in Q_{r_J}$, $\x_{J}^{-1}(x)$ intersects $\mathcal S_0$ transversely in either one or two points. If $x' \in \x_{J}^{-1}(x) \cap \mathcal S_0$ is the point closest to $x$, then $x'=(x'_1, x'_2)$ with 
 \[ 0 < x'_2 < \frac{K_J}{r_J} \lambda^2  \quad \text{and} \quad x'_1>  r_J - \frac{K_J}{r_J} \lambda^2. \]
\end{lem}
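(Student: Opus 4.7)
The plan is to reduce the geometric problem to analyzing the roots of a single quadratic equation. Let $v=(v_1,v_2)$ be a nonzero direction vector of the one-dimensional subspace $L_J^{\perp}$. Since $L_J^{\perp}$ is transversal to $V_J=\spn\{u_1\}$ (the $x_1$-axis), we have $v_2\neq 0$, and after rescaling we may assume $v_2>0$. For $x=(x_1,0)\in Q_{r_J}$ the fibre $\x_J^{-1}(x)$ is the line $\{(x_1+tv_1,\,tv_2):t\in\R\}$, and substituting into the defining equation $4x_1x_2=\lambda^2$ of $\mathcal S_0$ gives the quadratic
\[
  4v_1v_2\,t^2+4v_2\,x_1\,t-\lambda^2=0
\]
(which degenerates to a linear equation when $v_1=0$).

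The first step is to analyze the smaller root. By Vi\`ete's formulas the roots satisfy $t_1t_2=-\lambda^2/(4v_1v_2)$ and $t_1+t_2=-x_1/v_1$, so for $x_1\geq r_J$ large compared to $\lambda$ there is a small root
\[
  t_*=\frac{\lambda^2}{4v_2x_1}+O\!\left(\frac{\lambda^4}{x_1^3}\right),
\]
obtained from the appropriate branch of the square-root formula, while the other root has magnitude of order $x_1/|v_1|$. A sign check using $v_2>0$ shows $t_*>0$ in all cases. Taking $x'=(x_1+t_*v_1,\,t_*v_2)$, the bounds of the lemma follow from $x_2'=t_*v_2$ and $|x_1'-x_1|=|t_*v_1|$, each at most a constant times $\lambda^2/x_1\leq\lambda^2/r_J$, the constants depending only on $v$ and hence only on the projection $\x_J$. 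Taking $K_J$ to be the larger of the two constants yields the stated inequalities, and since $t_*$ is the root of smaller absolute value, $x'$ is the intersection of $\x_J^{-1}(x)\cap\mathcal S_0$ nearest to $x$.

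The second step is the count and transversality. The sign of $v_1$ controls the number of intersections with $\mathcal S_0$: if $v_1>0$ the product of roots is negative, so only one root is positive and the other does not meet the open positive orthant where $\mathcal S_0$ lives; if $v_1<0$ both roots are positive, and for any such $t>0$ one has $x_1+tv_1=\lambda^2/(4tv_2)>0$, so both intersections are genuine points of $\mathcal S_0$; the degenerate case $v_1=0$ gives exactly one. Transversality at $x'$ amounts to the nonvanishing of $v_1x_2'+v_2x_1'$ (the pairing of the line direction with the gradient of $4x_1x_2-\lambda^2$); since $x_1'$ is of order $r_J$ and $x_2'$ is of order $\lambda^2/r_J$, this quantity grows like $v_2\,r_J$ and is nonzero once $r_J\geq\bar R_J\lambda$ for a threshold $\bar R_J$ depending only on $v$.

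The main obstacle is not conceptual but rather the careful bookkeeping of signs and constants: one must verify uniformly in $x\in Q_{r_J}$ that the small root is positive, that it corresponds to the closest intersection, and that the constants entering the estimates depend only on $\x_J$. Once this is organized the lemma reduces to elementary inequalities for a quadratic with one small and one large root, and the same strategy---parametrize a fibre of $\x_J$ by $L_J^{\perp}$, substitute into the defining equation of $\mathcal S_k$, and control the small root---will adapt to the higher-codimension versions of this estimate needed in the subsequent sections.
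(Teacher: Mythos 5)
The paper does not actually prove this statement: Lemma \ref{interSdim2} is one of the ``technical lemmas'' that the author explicitly states without proof, appealing to the intuitive picture of Figure \ref{amoeba_projections}. So there is nothing in the text to compare against; judged on its own, your argument is correct and supplies exactly the missing computation. Parametrizing the fibre $\x_J^{-1}(x)=x+L_J^{\perp}$ by a direction $v=(v_1,v_2)$ with $v_2>0$ (transversality of $L_J^\perp$ to $V_J$ gives $v_2\neq 0$), substituting into $4x_1x_2=\lambda^2$, and controlling the small positive root via Vi\`ete/the explicit formula $t_*=\frac{2\lambda^2}{4v_2x_1+\sqrt{16v_2^2x_1^2+16v_1v_2\lambda^2}}$ gives precisely the stated bounds with constants depending only on $v$, hence only on $\x_J$.

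Two small points you should make explicit. First, when $v_1<0$ the existence of (two distinct) real roots is not automatic: you need the discriminant $16v_2\bigl(v_2x_1^2+v_1\lambda^2\bigr)$ to be positive, which is exactly one of the places the hypothesis $r_J>\bar R_J\lambda$ enters (take $\bar R_J>\sqrt{|v_1|/v_2}$, say); in your write-up $\bar R_J$ is only tied to the transversality estimate. Second, the lemma asserts transversality at \emph{every} intersection point, whereas your nonvanishing estimate for $v_1x_2'+v_2x_1'$ is carried out only at the closest point $x'$. This is easily repaired: either note that intersection points correspond to simple roots of the substituted quadratic (distinct roots, or the linear case $v_1=0$, force transversality at both points), or run the symmetric estimate at the far point, where the term $v_1x_2''$ dominates instead. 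With these remarks folded in, the proof is complete and is the natural quantitative version of the picture the paper leaves informal.
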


In the case $n=2$ we have the following:
  
 \begin{lem} \label{intersectS} Let $J= \{ 1, 2 \}$.  There exist positive constants $\bar R_J$ and $K_J$, depending only on the projection $\x_J$ (and not on $\lambda$), such that if  $r_J^- >  \bar R_J \lambda$, then for every $x = (x_1, x_2, 0) \in Q_{r_J}$, $\x_{J}^{-1}(x)$ intersects $\mathcal S_0$ transversely in either one or two points. If $x^+ \in \x_{J}^{-1}(x) \cap \mathcal S_0$ is the point closest to $x$, then $x^+=(x^+_1, x^+_2, x^+_3)$ with 
 \begin{equation}  \label{estimate3}  
         0 < x^+_3 < \frac{K_J}{(r^-_J)^2} \lambda^3 \quad \text{and} \quad x^+_j >  r^-_J - \frac{K_J}{(r^-_J)^2} \lambda^3 \ \text{for} \ j=1,2. 
 \end{equation}
 \end{lem}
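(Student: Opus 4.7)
The approach is a direct generalization of Lemma \ref{interSdim2} from a quadratic to a cubic equation in one variable. Pick a vector $v\in L^{\perp}$ whose $u_3$-component is $1$, so that in coordinates $v=(v_1,v_2,1)$; this is possible because $L^{\perp}$ is transversal to $V_J=\spn\{u_1,u_2\}$. The fiber $\x_J^{-1}(x)$ through a point $x=(x_1,x_2,0)\in Q_{r_J}$ is then $\{x+tv:t\in\R\}$, and substituting into the defining equation $27\,x_1x_2x_3=\lambda^3$ of $\mathcal S_0$ gives the cubic
\[
   f(t) \;=\; 27\,t\,(x_1+tv_1)(x_2+tv_2)\;-\;\lambda^3.
\]
Replacing $t,x_j$ by $\lambda\tilde t,\lambda\tilde x_j$ reduces everything to the case $\lambda=1$ at the cost of rescaling $r_J^-$ by $\lambda$, so the constants $\bar R_J$ and $K_J$ depend only on $v$, i.e.\ only on $\x_J$. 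I fix $\lambda=1$ from now on.

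Since $f(0)=-1$ and $f'(0)=27\,x_1x_2\geq 27(r_J^-)^2$ is uniformly large on $Q_{r_J}$, a smallest positive root $t^+$ exists. Concretely, on the interval $[0,C/(r_J^-)^2]$ with $C$ depending only on $|v_1|,|v_2|$, the factors $x_j+tv_j$ remain in $[x_j/2,2x_j]$ once $r_J^-$ exceeds a threshold $\bar R_J$ depending only on $v$; hence $f$ is strictly increasing on this interval and has a unique zero there, at some $t^+\leq K_J/(r_J^-)^2$ with $f'(t^+)>0$. The point $x^+=x+t^+ v$ then satisfies $x_3^+=t^+\leq K_J/(r_J^-)^2$ and $x_j^+\geq x_j-|v_j|t^+\geq r_J^--|v_j|K_J/(r_J^-)^2$ for $j=1,2$; after possibly enlarging $K_J$ this gives exactly the estimates \eqref{estimate3}, and transversality follows from $f'(t^+)\neq 0$.

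For the bound of at most two transverse intersections: $f$ is a cubic, so it has at most three real roots, and an intersection belongs to $\mathcal S_0\subset\{x_j>0\,\forall j\}$ only when $t$ lies in the admissible region $\{x_j+tv_j>0,\;j=1,2\}\cap\{t>0\}$. A sign analysis based on the leading coefficient $27v_1v_2$ and the signs of $v_1,v_2$ shows that $f$ admits at most two sign changes in this admissible region, hence at most two intersection points. At each such point, $f'$ is non-zero (the root is isolated by a simple sign change provided $r_J^-$ is large), giving transversality.

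The main technical obstacle I anticipate is the case analysis on the signs of $v_1,v_2$: if both are positive, $f$ is monotone on the admissible interval and there is only one intersection; if one or both are negative, the line can re-enter $\mathcal H$ from the side of a different $\mathcal S_k$, and the second intersection with $\mathcal S_0$ appears. In each sub-case one must produce a uniform threshold $\bar R_J$ and uniform constant $K_J$ depending only on $v$. The smallest root analysis above is however insensitive to these sub-cases, so this is the part that directly yields the stated estimates.
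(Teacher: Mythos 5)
The paper never gives a proof of Lemma \ref{intersectS}: it is one of the statements the author explicitly says he will ``state without proof'', so there is nothing in the text to compare your argument against; judged on its own, your proof is correct and is the natural one. Writing the fibre as $x+tv$ with $v=(v_1,v_2,1)$ spanning $L_{J}^{\perp}$, reducing to $\lambda=1$ by the scaling $t=\lambda\tilde t$, $x_j=\lambda\tilde x_j$ (which is precisely why $\bar R_J$ and $K_J$ are independent of $\lambda$), and locating the smallest positive root of $f(t)=27\,t(x_1+tv_1)(x_2+tv_2)-\lambda^3$ in an interval of length $O\left((r_J^-)^{-2}\right)$ where the factors are comparable to $x_j$ yields \eqref{estimate3} exactly as stated; and your identification of the nearest intersection with the smallest positive root is legitimate, since any root with $t\leq 0$ has non-positive third coordinate and so does not lie on $\mathcal S_0$. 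The one place to tighten is the ``one or two transverse points'' claim, where your appeal to a sign analysis and to ``$r_J^-$ large'' is vaguer than necessary: note that $g(t)=27\,t(x_1+tv_1)(x_2+tv_2)$ has its real zeros exactly at $t=0$ and $t=-x_j/v_j$, so on the admissible set $\{t>0,\ x_1+tv_1>0,\ x_2+tv_2>0\}$, which is an interval with left endpoint $0$, the function $g$ is either strictly increasing (when $v_1,v_2\geq 0$) or positive with a single interior maximum and vanishing at both endpoints. Since you have already shown $g>\lambda^3$ at $t=C\lambda^3/(r_J^-)^2$, a point of this interval once $r_J^->\bar R_J\lambda$, the level $\lambda^3$ can only be attained at simple crossings; hence there are exactly one or exactly two intersection points and each is automatically transverse, with no separate tangency argument needed for the second point.
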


Let  $\Gamma_{J^+}$ and $\Gamma_{J^-}$ be the two $n+1$-dimensional cones containing $\Gamma_J$. More precisely, if $k^{\pm}$ are the two elements of $\{0, \ldots, n+1 \}$ which are not in $J$, then $J^{\pm} = J \cup \{k^{\pm} \}$. The hypersurfaces $\mathcal S_{k^-}$ and $\mathcal S_{k^+}$ are contained in $\Gamma_{J^+}$ and $\Gamma_{J^-}$ respectively. 
We have the following 
\begin{cor} \label{intersectS2} Let $n=1$ or $2$. For any $\Gamma_J$ of dimension $n$, there exists a positive constant $\bar R_J$, depending only on the projection $\x_J$, such that for all $r_J \in \Gamma_J$ with $r_J^- > \bar R_J \lambda$ and all $x \in Q_{r_J}$, the line $\x_{J}^{-1}(x)$ intersects $\mathcal S_{k^-}$ and $\mathcal S_{k^+}$ transversely in either one or two points. 
\end{cor}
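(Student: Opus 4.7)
The plan is to deduce the corollary from Lemmas \ref{interSdim2} and \ref{intersectS}, which already establish the statement in the special case $J = J_0 := \{1,\ldots,n\}$ and intersection with $\mathcal{S}_0$. The key idea is that any other $n$-dimensional cone $\Gamma_J$ together with either of its two adjacent hypersurfaces $\mathcal{S}_{k^\pm}$ is related to this model case by a symmetry of $\Gamma$ from the group $G^*$ of \S\ref{symmetries}, so the constants and transversality conclusions can be transported by pullback.

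First I would observe that $G^*$ realizes the full symmetric group on the vertices $\{u_0,\ldots,u_{n+1}\}$: its generators $R^*_k$ act as the transpositions $(0,k)$, and these generate the whole symmetric group on $\{0,\ldots,n+1\}$. By \eqref{action_cones} the action permutes the cones $\Gamma_J$, and since $\mathcal{S}_l = R^*_l \mathcal{S}_0$ it permutes the hypersurfaces $\mathcal{S}_l$ compatibly. Fix $\Gamma_J$ of dimension $n$ with complementary indices $\{k^+,k^-\}$; since $\mathcal{S}_{k^\pm} \subset \Gamma_{J\cup\{k^\mp\}}$, I would pick $R^\pm \in G^*$ bijecting $J_0$ onto $J$ and sending $0 \mapsto k^\pm$ (which forces $n+1 \mapsto k^\mp$, since the image must be a permutation). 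This yields $R^\pm \Gamma_{J_0} = \Gamma_J$ and $R^\pm \mathcal{S}_0 = \mathcal{S}_{k^\pm}$.

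To apply the previous lemmas, define the pulled-back projection $\bar{\x}^\pm := (R^\pm)^{-1} \circ \x_J \circ R^\pm$. This is a projection $M_{\R} \to V_{J_0}$ whose kernel $(R^\pm)^{-1}(L_J^\perp)$ is transversal to $V_{J_0}$, hence it satisfies the hypotheses of Definition \ref{projFace2}. Apply Lemma \ref{interSdim2} (for $n=1$) or Lemma \ref{intersectS} (for $n=2$) to $\bar{\x}^\pm$ to obtain constants $\bar R^\pm$, and set $\bar R_J := \max(\bar R^+, \bar R^-)$. For any $r_J \in \Gamma_J$ with $r_J^- > \bar R_J \lambda$ and any $x \in Q_{r_J}$, let $r^\pm := (R^\pm)^{-1}(r_J)$ and $x^\pm := (R^\pm)^{-1}(x)$; since $R^\pm$ merely permutes the generators $u_j$, it preserves coordinate minima, so $(r^\pm)^- = r_J^- > \bar R^\pm \lambda$ and $x^\pm \in Q_{r^\pm}$. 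The lemma then gives that $(\bar{\x}^\pm)^{-1}(x^\pm) \cap \mathcal{S}_0$ consists of one or two transverse points, and pushing forward by the linear isomorphism $R^\pm$ transports this to the desired conclusion about $\x_J^{-1}(x) \cap \mathcal{S}_{k^\pm}$. The whole argument is really a routine transport of structure by a linear symmetry, so I do not anticipate a serious obstacle; the only point requiring care is ensuring that $R^\pm$ is chosen so as to carry $\mathcal{S}_0$ to the correct one of $\mathcal{S}_{k^\pm}$, which is pinned down precisely by the image of the index $0$.
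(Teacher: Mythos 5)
Your argument is correct and is essentially the route the paper intends: the paper states the corollary without proof, leaving it to follow from Lemmas \ref{interSdim2} and \ref{intersectS} by transporting the model case $(\Gamma_{J_0},\mathcal S_0)$ with the $G^*$-symmetries of \S\ref{symmetries}, exactly as you do, with the constant $\bar R_J$ taken as the maximum over the two pulled-back projections. Your care in choosing $R^\pm$ by the image of the index $0$, and in noting that the pulled-back projection again satisfies Definition \ref{projFace2} and that coordinate minima and $Q_{r_J}$ are preserved, covers the only points where something could go wrong.
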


Assume that $\bar R_J$ is as in the last corollary and that $r_J$ is such that $r_J^- > \bar R_J \lambda$. For every $x \in Q_{r_J}$ let $x^{\pm} \in \x_J^{-1}(x) \cap \mathcal S_{k^{\pm}}$ be the intersection point which is closest to $x$. Clearly the segment joining $x^+$ and $x^-$ is entirely contained in $\mathcal H$ and contains $x$. Denote such a segment by $I_{J,x}$ and define
\begin{equation} \label{fibrato}
 \mathcal H_{r_J} = \bigcup_{x \in Q_{r_J}} I_{J,x}.
\end{equation}

\begin{cor} \label{hj_inVJ} Let $n=1$ or $2$ and let $\Gamma_J$ be a cone of dimension $n$. There exists a constant $\bar R_J$, depending only on the projection $\x_J$ and satisfying Corollary \ref{intersectS2}, such that if $r_J \in \Gamma_J$ satisfies $r^-_J > \bar R_J \lambda$, then
\[ \mathcal H_{r_J} \subset \inter \mathcal V_J, \]
where the set on the righthand side is defined in \eqref{h_nbhoods}. \todo{ma questo corollario mi serve davvero?}
\end{cor}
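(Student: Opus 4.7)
Since $\mathcal V_J$ is the intersection of closed half-spaces, it is convex, so $\inter \mathcal V_J$ is an open convex set. The plan is to show that both endpoints $x^{\pm}$ of every segment $I_{J,x}$, for $x \in Q_{r_J}$, lie in $\inter \mathcal V_J$. Convexity then forces each segment into $\inter \mathcal V_J$, and taking the union over $x \in Q_{r_J}$ yields $\mathcal H_{r_J} \subset \inter \mathcal V_J$.

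For an endpoint lying on $\mathcal S_0$ (which forces $0 \notin J$), Lemma \ref{interSdim2} (when $n=1$) or Lemma \ref{intersectS} (when $n=2$), applied with the projection $\x_J$ from the fixed compatible system, provides constants $\bar R$ and $K$ depending only on $\x_J$ such that, for $r_J^- > \bar R \lambda$, the endpoint's coordinates satisfy the ``large'' bound $x_j > r_J^- - K\lambda^{n+1}/(r_J^-)^n$ for $j \in J$ and the ``small'' bound $0 < x_k < K\lambda^{n+1}/(r_J^-)^n$ for $k \notin J \cup \{0\}$. Increasing $r_J^-/\lambda$ forces the small coordinates to be strictly smaller than the large ones, which is exactly what is needed to verify the strict inequalities $x_j > 0$ (cutting out $\inter D_{j0}$) and $x_j > x_k$ (cutting out $\inter D_{jk}$) that define $\inter \mathcal V_J$. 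For an endpoint on $\mathcal S_k$ with $k \neq 0$, apply the symmetry $R^*_k$ of \S \ref{symmetries}: since $R^*_k(\mathcal S_k) = \mathcal S_0$ and, by Lemma \ref{tnbhd}, $R^*_k(\mathcal V_J) = \mathcal V_{R_k J}$, running the previous argument for $R^*_k(x^{\pm}) \in \mathcal S_0$ with the conjugated projection and pulling the resulting estimates back through $R^*_k$ witnesses the strict inequalities defining $\inter \mathcal V_J$ at $x^{\pm}$ (whose explicit form changes according to whether $0 \in J$, but the quantitative argument is identical).

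Setting $\bar R_J$ equal to the maximum of the finitely many constants so produced — one for each $k \in \{k^-, k^+\}$ — together with the constant provided by Corollary \ref{intersectS2} yields a constant depending only on $\x_J$ with the desired property. The main technical obstacle is the symmetry step: one must track, case by case, how the explicit map $R^*_k$ given in \S \ref{symmetries} transforms the coordinate estimates coming from $\mathcal S_0$ into estimates witnessing the strict inequalities defining $\inter \mathcal V_J$, whose precise shape depends both on whether $0 \in J$ and on which of $\mathcal S_{k^-}$, $\mathcal S_{k^+}$ the endpoint lies on. Everything else is just convexity.
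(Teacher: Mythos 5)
Your proposal is correct and follows essentially the same route as the paper: the paper's proof also applies the endpoint estimates \eqref{estimate3} of Lemma \ref{intersectS} (resp.\ Lemma \ref{interSdim2} for $n=1$), with a largeness condition on $\bar R_J$ forcing the strict inequalities $x^+_3 < x^+_j$ that place the endpoints in $\mathcal V_J$, handles the second endpoint ``similarly'' (i.e.\ by the same symmetry you invoke), and concludes by convexity along the segments $I_{J,x}$. The only cosmetic difference is that you put both endpoints in $\inter\mathcal V_J$ and use convexity of that open set directly, whereas the paper argues along the two subsegments through $x$.
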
 
\begin{proof} We do the case $n=2$ since the case $n=1$ is similar and easier. Assume $J= \{1,2 \}$. Let $\bar R_J$ and $K_J$ be as in Lemma \ref{intersectS} and $r_J$ such that $r^-_J > \bar R_J \lambda$.  Given $x \in Q_{r_J}$,  let $x^+ \in \x_{J}^{-1}(x) \cap \mathcal S_0$ be the end point of $I_{J,x}$. If $\bar R_J$ is chosen so that 
\[ \bar R_J > \frac{2K_J}{\bar R_J^2} \]
then the same is true for $r^-_J$. Then inequalities \eqref{estimate3} imply
\[ x^+_3 < x^+_j \quad \text{for} \ j=1,2, \]
i.e. $x^+ \in \mathcal V_J$. Obviously, also all points $x'$ on the segment from $x^+$ to $x$ lie in $\mathcal V_J$. Similarly if $x^-$ is the other end point of $I_{J,x}$. Choose $\bar R_J$ to be the constant which works for both end points.
\end{proof}
By construction $\x_J: \mathcal H_{r_J} \rightarrow Q_{r_J}$ is a fibre bundle with fibre $I_{J,x}$. We also have that $\h^{-1}(I_{J, x})$ is a circle, therefore $\h_J: \h^{-1}(\mathcal H_{r_J}) \rightarrow Q_{r_J}$ is a circle bundle. We would like to prove that $\gb_{J}: \h^{-1}(\mathcal H_{r_J}) \rightarrow \tilde E_J \times Q_{r_J}$ is diffeomorphism, thus providing a trivialization of the fibre bundle, but we first need to show that $\gb_{J}$ is well defined on $ \h^{-1}(\mathcal H_{r_J})$, i.e. that the latter is contained in $\tilde U_{J}$, the domain of the projection $\y_J$.  For this purpose we define certain neighborhoods of an edge $E_J$ of $C$. We first work inside $C^+$ and then extend to $C$ by symmetry. Let $b_J$ be the barycenter of $E_J^+$.   Given $\epsilon \in (0,1)$ and a vertex $p_j$ not in $E_J$ define the point
\begin{equation} \label{qnearface}
          q_{j,\epsilon} = \epsilon p_{j} + (1- \epsilon) b_J.
\end{equation} 
In case $n=1$ there is only one vertex $p_j$ not in $E_J$ thus we let
\begin{equation} \label{wsets}
          \mathcal W^+_{J, \epsilon} = \conv ( E^+_J \cup q_{j, \epsilon}) \cap C^+.
\end{equation}
If $n=2$, let $J = \{j_1, j_2 \}$. Then the points $p_{j_1}$ and $p_{j_2}$ are not a vertices of $E_J$.
Define
\[ \mathcal W^+_{J, \epsilon} = \conv( E^+_J \cup \{ q_{j_1, \epsilon}, p_{j_2} \}) \cap  \conv ( E^+_J \cup \{ q_{j_2, \epsilon}, p_{j_1} \})  \cap C^+ \]
\todo{check that $\conv$ is defined} Notice that when $\epsilon =1/2$, $\mathcal W^+_{J, \epsilon}$ coincides with $\mathcal W^+_{J}$ defined \eqref{facenbh}, thus $\mathcal W^+_{J, \epsilon}$ is a deformation of $\mathcal W^+_{J}$ which comes closer to $E^+_J$ as $\epsilon$ becomes small. Define $\mathcal W_{J, \epsilon}$ as usual using the involution and $\tilde{\mathcal W}_{J, \epsilon}$ by blowing up. 

\begin{lem} \label{preimSmall} Given an edge $E_J$, for every $\epsilon \in (0,1/2)$ there exists $\bar R_J > 0$, depending only on $\x_J$ and $\epsilon$ and satisfying Corollaries \ref{intersectS2} and \ref{hj_inVJ}, such that for all $r_J$ with $r^-_J > \bar R_J \lambda$
\[ \h^{-1}( \mathcal H_{r_J}) \subseteq  \tilde{\mathcal W}_{J, \epsilon}. \]
\end{lem}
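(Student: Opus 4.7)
The plan is to argue by contradiction using the compactness of $\tilde C$ together with the asymptotic description of $\h$ near the faces of $C$ provided by Lemma \ref{hnear_faces}. Suppose for some fixed $\epsilon \in (0,1/2)$ no such $\bar R_J$ exists. Then there is a sequence $r_J^{(\ell)} \in \Gamma_J$ with $r_J^{(\ell),-} \to \infty$, together with points $q_\ell \in \h^{-1}(\mathcal H_{r_J^{(\ell)}})$ lying outside $\tilde{\mathcal W}_{J,\epsilon}$. By compactness of $\tilde C$, after passing to a subsequence we may assume $q_\ell \to q_\infty$ in the closed complement of $\tilde{\mathcal W}_{J,\epsilon}$.

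Each $\h(q_\ell)$ lies on a segment $I_{J, x_\ell}$ with $x_\ell \in Q_{r_J^{(\ell)}}$, and the estimates of Lemmas \ref{interSdim2} and \ref{intersectS} force $|\h(q_\ell) - x_\ell|$ to stay uniformly bounded (in fact to vanish as $r_J^{(\ell),-} \to \infty$). In particular $\|\h(q_\ell)\| \to \infty$, so $q_\infty$ cannot lie in the interior of $C$ and must therefore lie in $\tilde E_{J'}$ for some nonempty $J' \subsetneq \{0, \ldots, n+1\}$.

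The key step is to identify $J'$. Lemma \ref{hnear_faces} prescribes the precise asymptotic behavior of $\h(q_\ell)$ in terms of $J'$, while $\h(q_\ell) \approx x_\ell \in Q_{r_J^{(\ell)}}$ prescribes it in terms of $J$: each coordinate of $\h(q_\ell)$ indexed by $j \in J$ (or a suitable linear combination, when $0 \in J$) inherits the growth of the $u_j$-coefficient of $x_\ell$, which is bounded below by $r_J^{(\ell),-}$. Matching the two prescriptions, any $j_0 \in J \setminus J'$ would force a coordinate of $\h(q_\ell)$ to be simultaneously divergent and bounded (or tending to zero), while any $k_0 \in J' \setminus J$ would force a bounded coordinate to diverge. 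Both are impossible, so $J' = J$ and $q_\infty \in \tilde E_J$.

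Since $\mathcal W_{J,\epsilon}$ is by construction a neighborhood of $E_J$ in $C$, its lift $\tilde{\mathcal W}_{J,\epsilon}$ contains $\tilde E_J$ in its interior. Hence $q_\ell$ lies in $\tilde{\mathcal W}_{J,\epsilon}$ for all sufficiently large $\ell$, contradicting the choice of $q_\ell$. The hardest step will be the direction-matching argument, which requires splitting into sub-cases depending on whether $0$ belongs to $J$ and to $J'$: when $0 \in J$, the standard coordinates of $x_\ell$ take the form $t_j - t_0$, and correspondingly Lemma \ref{hnear_faces} must be applied in its ``differences'' formulation. A secondary technical point is verifying that $\tilde E_J$ is interior to $\tilde{\mathcal W}_{J,\epsilon}$ also at the blow-up divisors over the vertices of $E_J$, which follows from the explicit construction of $\mathcal W_{J,\epsilon}$ via the auxiliary points $q_{j,\epsilon}$.
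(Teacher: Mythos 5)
Your strategy (compactness plus the asymptotics of Lemma \ref{hnear_faces}) is genuinely different from the paper's, which proves the inclusion by a direct quantitative estimate: from $\mathcal H_{r_J}\subset \mathcal V_J$ one gets $\h^{-1}(\mathcal H_{r_J})\subset \tilde{\mathcal W}_J$, and then the explicit formula for $\h$ on $\mathcal W_J\cap\mathcal W_{J_0}$ gives a bound of the form $h_1(y)\leq 2C\,y_3/y_1$, which combined with $h_1(y)\geq r_J^- - K/(r_J^-)^2$ (from \eqref{estimate3}) yields $y_1\leq \tfrac{2C}{R'}y_3$ and hence $y\in\mathcal W_{J,\epsilon}$ with $\epsilon\sim C/(R'+C)\to 0$. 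However, your argument as written has a genuine gap at its very first step: $\tilde C$ is \emph{not} compact. The coamoeba $C$ contains its vertices but none of its higher-dimensional boundary faces, and $\tilde C$ is only the blow-up of $C$ at the vertices; the faces $E_{J'}$, and their blow-ups $\tilde E_{J'}$, are not part of $\tilde C$. So you cannot extract $q_\infty$ inside $\tilde C$, and the claim that the limit ``must lie in $\tilde E_{J'}$'' presupposes a compactification of $\tilde C$ whose boundary strata are the $\tilde E_{J'}$ together with control of $\h$ near all of them. The quoted results only give such control in two regimes: limits inside $\tilde C$ (Lemma \ref{smoothExt}, Proposition \ref{imh}) and limits at points of $\inter E_{J'}$ (Lemma \ref{hnear_faces}). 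A sequence whose projection $\pi(q_\ell)$ converges to a \emph{vertex} of $\bar E_J$ (an endpoint $p_{k^\pm}$ of the edge) while the directions from that vertex degenerate to the boundary of the tangent cone converges in neither regime; Corollary \ref{h_near_bndry} only says $\h(q_\ell)$ is bounded or diverges and carries no directional information, so your direction-matching step cannot be run there and the contradiction is not obtained. Note also that this vertex case cannot be discarded via the final step either, since $\mathcal W_{J,\epsilon}$ is a neighborhood of $\inter E_J$ but not of the endpoints of the edge, which is exactly where the paper's explicit coordinate estimate does the work.

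Two further points. First, the matching of asymptotics with membership in $Q_{r_J}$ does work for limits in $\inter E_{J'}$ (your case split on whether $0\in J$, $0\in J'$ is the right one, using that $|J|=n$ and $|J'|\leq n$), and the fact that $|\h(q_\ell)-x_\ell|\to 0$ follows from \eqref{estimate3} plus transversality of the line $\x_J^{-1}(x)$ to $V_J$; so the soft argument could in principle be repaired, but only by adding an analysis of $\h$ near the vertices of $E_J$ that is not contained in the results you invoke --- at which point one is essentially redoing the paper's estimate. Second, the statement requires $\bar R_J$ to depend only on $\x_J$ and $\epsilon$, not on $\lambda$; your contradiction argument, run at fixed $\lambda$, produces a threshold a priori depending on $\lambda$. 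This is harmless because $\h_\lambda=\lambda\h_1$ reduces everything to $\lambda=1$ (as the paper notes), but it should be said explicitly.
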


\begin{proof}  We prove it for $n=2$, the case $n=1$ is similar. We can assume $J=\{1,2 \}$. We do the case $\lambda =1$, the general case follows easily. We have that $\mathcal H_{r_J} \subset \mathcal V_J$. Therefore $\h^{-1}(\mathcal H_{r_J}) \subset \mathcal W_J$. 
Let $x' \in \mathcal H_{r_J}$, i.e. $x' \in I_{J,x}$ for some $x \in Q_{r_J}$. By symmetry we can assume $x' \in \mathcal H_0$. By continuity we can also assume that $x'$ is not one of the end points of $I_{J,x}$, so that there is a unique $y \in C^+$ such that $\h(y) = x'$. We have $y \in \mathcal W_{J} \cap \mathcal W_{J_0}$. Inequalities \eqref{estimate3} imply 
\begin{equation} \label{stima_hj}
       h_j(y) >  r^-_J - \frac{K}{(r^-_J)^2} \ \text{for} \ j=1,2.
\end{equation}
Since $y \in \mathcal W_{J} \cap \mathcal W_{J_0}$, we have
\begin{equation} \label{y3y1}
     \begin{split} 
          0 & < 2y_j + \sum_{k \neq j} y_k  < \pi/2 \quad \forall j=1,2,3  \\
          y_3 & > y_j  \quad \text{for} \ j=1,2.
     \end{split}
\end{equation}
Using the symmetries we can also assume that $y_1 \geq y_2$. Then we have that for some constant $C$
\[ \begin{split} 
        h_1(y) & = \frac{\cos \left( 2y_1 + y_2 + y_3 \right) \sin y_2 \sin {y_3}}{\left[ \cos \left( y_1+y_2+y_3 \right) \sin y_1\sin y_2 \sin y_3 \right]^{\frac{2}{3}}} \leq C \frac{\sin y_2 \sin{y_3}}{\left[\sin y_1\sin y_2 \sin y_3 \right]^{\frac{2}{3}}} = \\ 
        \ & \ \\
        \ & = C \frac{(\sin y_2)^{1/3} \sin{y_3}}{\left[\sin y_1 \sin y_3 \right]^{\frac{2}{3}}} \leq C \frac{(\sin y_1)^{1/3} \sin{y_3}}{\left[\sin y_1 \sin y_3 \right]^{\frac{2}{3}}} \leq C \frac{\sin y_3}{\sin y_1} \leq 2C \frac{y_3}{y_1},
   \end{split}  
       \]  
 where in the first inequality we used the fact that we are on $\mathcal W_J \cap \mathcal W_{J_0}$, so that we can bound the factors involving the cosine; in the third inequality we used \eqref{y3y1} so that we can replace $\sin y_3$ in the denominator with $\sin y_1$. 
Then \eqref{stima_hj} implies 
\[ y_1 \leq \frac{2C}{R'} y_3 \]
where 
\[ R' = r^-_J - \frac{K}{(r^-_J)^2}. \]
This implies that $y \in \mathcal W_{J, \epsilon}$ with $\epsilon= \frac{C}{R'+C}$, which tends to $0$ as $r^-_J \rightarrow + \infty$.
\end{proof}

\begin{cor} \label{fibreBndl1} Let $E_J$ be an edge. There exists a constant $\bar R_J >0$, depending only on $\x_{J}$ and satisfying Corollaries \ref{intersectS2} and \ref{hj_inVJ}, such that for all $r_J$ with $r^-_J > \bar R_J \lambda$, we have
\[ \h^{-1}(\mathcal H_{r_J}) \subset \tilde U_{J}. \]
Moreover $\gb_J: \h^{-1}(\mathcal H_{r_J}) \rightarrow \tilde E_J \times Q_{r_J}$ is a diffeomorphism and $\Phi(\h^{-1}(\mathcal H_{r_J}))$ is the graph of $dG$ over $\tilde E_J \times Q_{r_J}$, where $G$ is the Legendre transform of $F$ defined in Proposition \ref{faceproj}.
\end{cor}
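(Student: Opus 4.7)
The plan is to assemble the conclusion from Lemma \ref{preimSmall}, Proposition \ref{faceproj} and a short surjectivity argument, in that order. First I would arrange $\h^{-1}(\mathcal H_{r_J})\subset\tilde U_J$. By Lemma \ref{preimSmall}, for any $\epsilon\in(0,1/2)$ there is a threshold $\bar R_J(\epsilon)$ such that $r_J^->\bar R_J(\epsilon)\lambda$ forces $\h^{-1}(\mathcal H_{r_J})\subseteq\tilde{\mathcal W}_{J,\epsilon}$. Since the neighborhoods $\mathcal W_{J,\epsilon}$ collapse down to the closure of $E_J$ in $C$ as $\epsilon\to 0$, and $L_J$ is transverse to $E_J$, a standard transversality argument lets one fix a single $\epsilon_0>0$ depending only on $\x_J$ so that $\tilde{\mathcal W}_{J,\epsilon_0}\subseteq\tilde U_J$. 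Setting $\bar R_J:=\bar R_J(\epsilon_0)$, enlarged if necessary so that Corollaries \ref{intersectS2} and \ref{hj_inVJ} hold simultaneously, produces the desired constant depending only on $\x_J$.

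Once the inclusion $\h^{-1}(\mathcal H_{r_J})\subset\tilde U_J$ is in place, Proposition \ref{faceproj} applied to this subset does most of the remaining work: it says $\gb_J$ restricted there is a diffeomorphism onto its image, and that $\Phi$ of this image is the graph of the differential of a Legendre transform $G$ of $F$. So the third claim of the corollary follows immediately, and only the identification of the image $\gb_J(\h^{-1}(\mathcal H_{r_J}))$ with $\tilde E_J\times Q_{r_J}$ remains.

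The inclusion $\gb_J(\h^{-1}(\mathcal H_{r_J}))\subseteq \tilde E_J\times Q_{r_J}$ is immediate: by definition \eqref{fibrato} one has $\x_J(\mathcal H_{r_J})=Q_{r_J}$, so $\h_J=\x_J\circ\h$ lands in $Q_{r_J}$ on our domain, while $\y_J$ trivially lands in $\tilde E_J$. For the reverse inclusion, fix $x\in Q_{r_J}$; the fibre $\h_J^{-1}(x)=\h^{-1}(I_{J,x})$ was identified in the discussion preceding Lemma \ref{preimSmall} as a circle, and on it $\gb_J$ reduces to $\y_J$, which is an injective immersion onto an open subset of $\tilde E_J$ by Corollary \ref{submers} together with the injectivity of $\gb_J$ on $\tilde U_J$. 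Since $\tilde E_J$ is itself a circle (a $1$-dimensional coamoeba in a $1$-dimensional subtorus, on which the real blow-up is trivial), and an injective continuous map between compact connected $1$-manifolds is a homeomorphism, $\y_J$ carries this fibre onto all of $\tilde E_J$. Varying $x\in Q_{r_J}$ yields the required surjectivity and completes the diffeomorphism claim.

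The main obstacle is the transversality step selecting $\epsilon_0$: one must verify that the extended projection $\y_J$ is defined on a uniform tubular neighborhood $\tilde{\mathcal W}_{J,\epsilon_0}$ of $E_J$ (including the blow-ups of its vertices), with $\epsilon_0$ depending only on the choice of complement $L_J$ and not on $\lambda$ or $r_J$. Once this is done, the remaining pieces are direct consequences of Proposition \ref{faceproj} and the circle-bundle picture already established in \S\ref{Trimming1}.
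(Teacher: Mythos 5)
Your argument is correct and follows essentially the same route as the paper: Lemma \ref{preimSmall} plus the observation that $\tilde{\mathcal W}_{J,\epsilon}\subset\tilde U_J$ for small $\epsilon$ gives the inclusion, Proposition \ref{faceproj} gives the diffeomorphism-onto-image and the Legendre-transform graph statement, and surjectivity is obtained exactly as in the paper by noting that $\y_J$ restricted to a fibre $\h^{-1}(I_{J,x})$ is an injective map between circles, hence onto $\tilde E_J$. Your write-up merely spells out the details (choice of $\epsilon_0$, enlarging $\bar R_J$ to meet Corollaries \ref{intersectS2} and \ref{hj_inVJ}, the circle topology argument) more explicitly than the paper does.
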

\begin{proof}
It can be seen that, for $\epsilon$ small enough, $\tilde{\mathcal W}_{J,\epsilon} \subset \tilde U_{J}$. Thus the first part of the Corollary follows from Lemma \ref{preimSmall}. To prove that $\gb_J$ is a diffeomorphism we need to prove surjectivity, but this follows from the fact that $\y_{J}$ restricted to a fibre $\h^{-1}(I_{J,x})$ is a one to one map between a pair of circles, thus it must be surjective. The last claim follows from Proposition \ref{faceproj}.
\end{proof}

\begin{rem} \label{level_set} In case $n=1$ also the inverse of Lemma \ref{preimSmall} holds, namely for any $r_J$ such that $\h^{-1}( \mathcal H_{r_J}) \subset \tilde U_J$, there exists an $\epsilon$ such that $\tilde{\mathcal W}_{J,\epsilon} \subset \h^{-1}( \mathcal H_{r_J})$. Indeed consider the boundary $\partial \tilde{\mathcal W}_{J,\epsilon}$ of $\tilde{\mathcal W}_{J,\epsilon}$, i.e. the union of the two segments joining $q_{j, \epsilon}$ to the vertices of $E_J$. It is a compact set. Then $\h_J$ restricted to $\partial \tilde{\mathcal W}_{J,\epsilon}$ tends uniformly to $\infty$ as $\epsilon \rightarrow 0$, therefore for small enough $\epsilon$, $\h_J(\partial \tilde{\mathcal W}_{J,\epsilon}) \subset Q_{r_J}$. Since $\h_J$ has no critical points on the fibres of $\y_J$ we must also have that $\h_J(\tilde{\mathcal W}_{J,\epsilon}) \subset Q_{r_J}$. The situation is depicted in Figure \ref{amoeba_level_sets}. 
\begin{figure}[!ht] 
\begin{center}
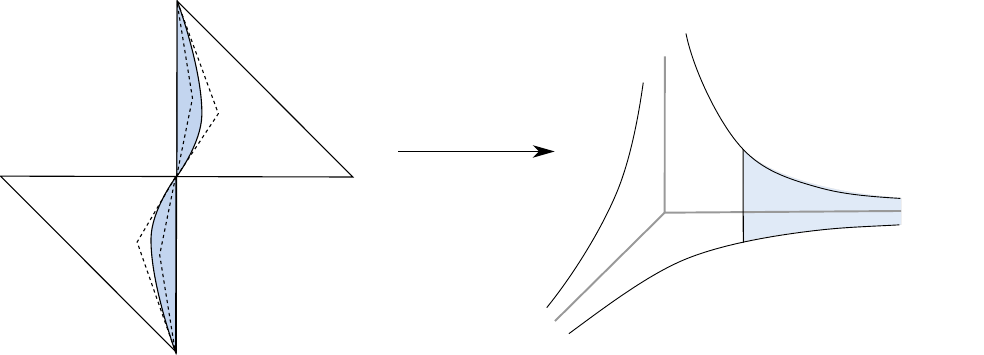
\caption{The shaded area in the co-amoeba is the preimage of $\mathcal H_{r_1}$. The dashed lines represent two neighborhoods of type $\tilde{\mathcal W}_{J, \epsilon}$.} \label{amoeba_level_sets}
\end{center}
\end{figure}
\end{rem}

We are now ready to do the first trimming of the Lagrangian pair of pants. For every $J$, with $|J|=2$, let $\bar R_J$ satisfy Corollary \ref{fibreBndl1} and choose some $r_J \in \Gamma_J$ such that $r^-_J > \bar R_J \lambda$. Then the sets $\mathcal H_{r_J}$ are pairwise disjoint (by Corollary \ref{hj_inVJ}). Define the set
\begin{equation} \label{trim1}
   \mathcal H^{[1]} = \mathcal H - \bigcup_{|J|=2} \mathcal H_{r_J}. 
\end{equation}

\subsection{Trimming the ends over $1$-dimensional cones} \label{Trimming2}
We consider a three dimensional $\lambda$-rescaled Lagrangian pair of pants $\Phi(\tilde C)$ whose set $\mathcal H$ has been trimmed over $2$-dimensional cones, as in the previous subsection, to form the set $\mathcal H^{[1]}$. Given a two dimensional face $E_{J}$ of $C$ and the restriction of $\h_{J}$ to $\h^{-1}(\mathcal H^{[1]})$. The goal is to study the fibres of this map, i.e. given a point $x \in V_J$ we want to understand $\h_{J}^{-1}(x) \cap \h^{-1}(\mathcal H^{[1]})$. We are particularly interested in the case when $x \in Q_{r_J}$ for $r_J$ large enough. In this case the connected component of $\x^{-1}_J(x) \cap  \mathcal H^{[1]}$ containing $x$ is homeomorphic to a two dimensional hyperplane amoeba (i.e. to the two dimensional version of $\mathcal H$) as in Figure \ref{amoeba_slices}. This is rather intuitive but we will sketch a proof below. Therefore the preimage of this set with respect to $\h$ is homeomorphic to a two dimensional pair of pants. We will consider the union of all such connected components of $\x^{-1}_J(x) \cap  \mathcal H^{[1]}$ as $x$ varies in $Q_{r_J}$ and show that its preimage with respect to $\h$ is contained in $\tilde U_J$ and thus $\h_J$ restricted to this set is a fibrebundle with fibre a two dimensional pair of pants. We will also study the image of these fibres with respect to $\y_J$ inside $\tilde E_J$.

Given a one dimensional cone $\Gamma_J$ and a point $x \in \Gamma_J$ define $I_{J,x}$ to be the connected component of $\x_{J}^{-1}(x) \cap \mathcal H^{[1]}$ which contains $x$. 

It is convenient to define
\[ R^+_J = \max \{ r^+_{J'} \}_{J \subset J'} \quad \text{and} \quad R^-_J = \min \{ r^{-}_{J'} \}_{J \subset J'}. \]
We also assume that the points $r_{J'}$ have been chosen so that 
\begin{equation} \label{harmless}
                   R^+_J \geq \frac{\lambda^3}{(R^{-}_J)^2}.
\end{equation} 

\begin{lem} \label{xjfibre} Let $\Gamma_J$ be a one dimensional cone. There exists a constant $K_{J}$ depending only on $x_J$, such that if $r_J \in \Gamma_J$ satisfies
\begin{equation} \label{bound_prop}
                r_J > K_J R^+_{J} 
\end{equation}
then for all $x \in Q_{r_J}$, $I_{J,x}$ is homeomorphic to the two dimensional version of $\mathcal H$ (defined in \eqref{the amoeba},  with $n=1$, see Figure \ref{amoeba_slices}). 
\end{lem}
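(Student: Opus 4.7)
The plan is to slice $\mathcal H^{[1]}$ by the transverse $2$-plane $\x_J^{-1}(x)$ and identify the component $I_{J,x}$ with a two-dimensional hyperplane amoeba, treating the trimming as a mild truncation of three distal arms. First I would reduce to $\lambda = 1$ by rescaling, since the hypothesis $r_J > K_J R^+_J$ is scale-invariant up to adjusting $K_J$. By the symmetry group $\mathcal G$ of \S\ref{symmetries}, I may also assume $J = \{1\}$, so the three $2$-dimensional cones containing $\Gamma_J$ are $\Gamma_{\{1,0\}}$, $\Gamma_{\{1,2\}}$, $\Gamma_{\{1,3\}}$, and the three boundary surfaces relevant for the slice are $\mathcal S_0$, $\mathcal S_2$, $\mathcal S_3$ (the surface $\mathcal S_1$ lives in an opposite region of $M_\R$ and does not meet $\x_J^{-1}(x)$ once $r_J$ is large).

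Next, for $x \in Q_{r_J}$ with $r_J$ large compared to the projection constants, I would carry out the same transversality argument as in Lemma \ref{intersectS}, but for planes rather than lines, to show that each $\mathcal S_k \cap \x_J^{-1}(x)$ with $k \in \{0,2,3\}$ is a smooth curve, asymptotic to two coordinate directions (its defining equation is a monomial set equal to a constant, restricted to the affine plane). The three such curves bound in $\x_J^{-1}(x)$ a region with three asymptotic arms and a bounded central piece; the resulting shape is homeomorphic to the two-dimensional version of $\mathcal H$ in \eqref{the amoeba}, with the role played there by the three vertices $p_0, p_1, p_2$ now taken by $p_0, p_2, p_3$.

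Third, the trimming must be folded in. Each $\mathcal H_{r_{J'}}$ with $J' \supset J$ and $|J'|=2$ is a fibre bundle over $Q_{r_{J'}}$ as in \eqref{fibrato}, and its intersection with $\x_J^{-1}(x)$ is either empty or a convex cap sitting at the far end of exactly one of the three arms of $I_{J,x}$. The hypothesis $r_J > K_J R^+_J$ forces every such cap to lie well beyond the central piece of the slice, so removing it simply shortens one arm without pinching or disconnecting the amoeba. The auxiliary inequality \eqref{harmless} ensures that distinct trimmings do not interfere with each other in the slice plane.

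The main obstacle will be showing that the three curves $\mathcal S_k \cap \x_J^{-1}(x)$ really do enclose a single connected piece of the expected topological type, and that the constant $K_J$ can be chosen depending only on $\x_J$, uniformly in $x \in Q_{r_J}$ and in the $r_{J'}$. This is essentially a two-dimensional analogue of Lemma \ref{intersectS}, and I expect it to follow from the explicit monomial form of the defining equations of the $\mathcal S_k$'s together with hyperbola-type asymptotic estimates in the slice plane.
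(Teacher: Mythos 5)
Your overall picture (slice by the plane $\x_J^{-1}(x)$, three arms, trimming truncates them) agrees with the paper's, but the two steps you rely on are exactly where the content lies, and one of them is false as stated. The paper's proof runs entirely on the compatibility of the projections: since $\x_J$ factors through $\x_{J'}$ for each two-dimensional cone $\Gamma_{J'}\supset\Gamma_J$, the plane $\x_J^{-1}(x)$ is a union of fibres of $\x_{J'}$, so its intersection with the removed end $\mathcal H_{r_{J'}}$ is a union of whole fibre segments $I_{J',x'}$ of the bundle \eqref{fibrato}. Writing $\x_{J'}(x')=(x'_1-ax'_3,\,x'_2-bx'_3)$ and $\x_J(x')=x'_1-ax'_3-c(x'_2-bx'_3)$, one locates the unique $x''\in\x_J^{-1}(x)\cap\Gamma_{J'}$ with $x''_2=s_2$; the hypothesis \eqref{bound_prop}, with $K_J$ absorbing $1+|c|$ over the three cones, gives $x''_1=x_1+cs_2>s_1$, i.e. $x''\in Q_{r_{J'}}$, so the frontier of the removed set inside that arm is the single segment $I_{J',x''}$, already known to exist by Lemma \ref{intersectS}/Corollary \ref{intersectS2} applied at $x''$, and lying in the boundary of $I_{J,x}$. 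This is precisely your assertion that the intersection is ``a cap at the far end of exactly one arm whose removal only shortens that arm,'' but you give no mechanism for it: without the factorization of $\x_J$ through $\x_{J'}$ you have no control on how the plane cuts $\mathcal H_{r_{J'}}$, and this factorization is also the only place the constant $K_J$, depending only on $\x_J$, comes from. (Incidentally, \eqref{harmless} plays no role in this lemma; it is used later, e.g. for Corollary \ref{HrjVj} and Lemma \ref{preimSmall2}.)

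The second problem is your intermediate claim that, before trimming, the component of $\x_J^{-1}(x)\cap\mathcal H$ containing $x$ is a three-armed region bounded only by the curves $\mathcal S_k\cap\x_J^{-1}(x)$, $k\in\{0,2,3\}$. This is not true for a general compatible system of projections: the plane meets $V_{J'}$ in the line through $x$ with direction $L_J^{\perp}\cap V_{J'}$, and if the corresponding constant $c$ is negative this line exits $\Gamma_{J'}$ through its other boundary ray (e.g. $\Gamma_{\{2\}}$) at distance of order $x_1/|c|$; there the untrimmed slice component continues past that ray, branches along the slices of the other two cones containing it, and acquires boundary pieces on $\mathcal S_1$. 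Its topology is then not that of the two-dimensional $\mathcal H$, and only the trimming --- whose cut $I_{J',x''}$ sits at $x''_2=s_2\le R^+_J$, far before any such crossing --- produces the shape asserted in the lemma. So the order of your argument (first identify the untrimmed shape globally, then ``fold in'' the trimming as a harmless truncation) cannot work as written; one has to describe the trimmed component directly, which is what the paper's fibrewise reduction to the one-dimensional Lemma \ref{intersectS} accomplishes.
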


\begin{proof}
We can assume that $J = \{ 1 \}$, so that points of $\Gamma_J$ are of the form $x = (x_1, 0, 0)$ with $x_1 > 0$. 
Observe that $x \in \Gamma_J$ belongs to $\mathcal H_0$, $\mathcal H_2$ and $\mathcal H_3$.  To determine the shape of $I_{J,x}$ we have to consider a two dimensional cone $\Gamma_{J'}$ containing $\Gamma_J$ and see what happens to $\x_J^{-1}(x) \cap \mathcal H$ when we remove its intersection with $\mathcal H_{r_{J'}}$. We can assume that $J'=\{ 1, 2 \}$. Since we have chosen a compatible system of projections, the projection $\x_J$ factors through $\x_{J'}$. So if $\x_{J'}$ is given by
\[ \x_{J'}(x'_1, x'_2, x'_3) = (x'_1 - a x'_3, x'_2 - b x'_3) \]
Then $\x_J$ can be written as
\[ \x_{J}(x'_1, x'_2, x'_3) = x'_1 - a x'_3 - c(x'_2 - b x'_3),\]
Now let $r_{J'}=(s_1, s_2, 0)$ be the point chosen to define $\mathcal H_{r_{J'}}$. Choose $r_J$ such that 
\[ r_J > (1+|c|)r^+_{J'}. \]
Then for all $x=(x_1, 0, 0) \in Q_{r_J}$ there exists a unique $x^{''}=(x^{''}_1, x^{''}_2, 0) \in \x_{J}^{-1}(x) \cap \Gamma_{J'}$ such that $x^{''}_2 = s_2$. Moreover we have
\[ x^{''}_1 = x_1 + cs_2 \geq r_J - |c|r^+_{J'} > r^+_{J'} \geq s_1.\] 
\begin{figure}[!ht] 
\begin{center}
\includegraphics{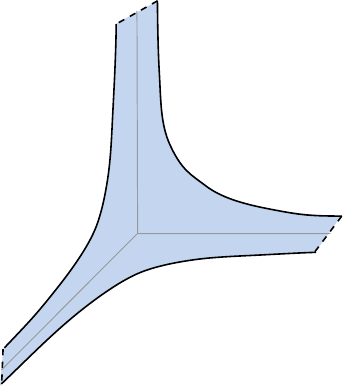}
\caption{The set $I_{J,x}$ when $\Gamma_J$ is one dimensional} \label{amoeba_slices}
\end{center}
\end{figure}
Therefore, by construction, the line $\x_{J'}^{-1}(x^{''})$ intersects $\mathcal S_0$ and $\mathcal S_3$ transversely, i.e. the segment $I_{J',x''}$ is well defined.  Moreover  $I_{J',x''}$ is contained in the boundary of $I_{J,x}$. Repeating this argument for all three of the two dimensional cones containing $\Gamma_J$ we find a $K_J$, depending only on the projection $\x_J$, such that if $r_J$ satisfies \eqref{bound_prop},  then for all $x \in Q_{r_J}$, $I_{J,x}$ must have the shape depicted in Figure \ref{amoeba_slices}, where the short dashed lines represent the three segments of the type $I_{J',x''}$ just described. This set is clearly homeomorphic to the two dimensional version of $\mathcal H$. 
\end{proof}

Given a one dimensional cone $\Gamma_J$ and $r_J$ as in the previous Lemma, let us define $\mathcal H_{r_J}$ as in \eqref{fibrato}. We want to estimate the location of $\h^{-1}(\mathcal H_{r_J})$ in side $\tilde C$. For this purpose, let us define special neighborhoods of a two dimensional face $\tilde E_J$ of $\tilde C$. Clearly $|J| =1$, i.e. $J = \{ j \}$. Let $b_J$ be the barycenter of the two dimensional face $E^+_J$ and consider the vertex $p_j$ which is the unique vertex not in $E_J$. Define the point $q_{j,\epsilon}$ on the segment between $b_J$ and $p_j$ as in \eqref{qnearface}. Let 
\[ \mathcal W^+_{J,\epsilon} = \conv ( E^+_J \cup q_{j,\epsilon}) \cap C^+ \]
and define $\mathcal W_{J, \epsilon}$ and $\tilde{\mathcal W}_{J, \epsilon}$ by symmetry and blow up as usual. Clearly we have that $\tilde{\mathcal W}_J =  \tilde{\mathcal W}_{J, 1/2}$. 

\begin{lem} \label{sizeI_x} Let $J=\{ 1 \}$ and assume that $r_J$ satisfies \eqref{bound_prop} so that $I_{J,x}$ is homeomorphic to the two dimensional version of $\mathcal H$. Then there exists a positive constant $C_J$, depending only on the projections, such that every $x'=(x'_1, x'_2, x'_3) \in I_{J,x}$ satisfies 
 \begin{equation} \label{xprime_ineq}
     \begin{split}
           & x'_1 \geq r_J-C_JR^+_J- \frac{C_J \lambda^3}{(R^-_J)^2} \\ 
           & |x'_j| \leq R^+_J +  \frac{C_J \lambda^3}{(R^-_J)^2} \quad \text{for} \ j=2,3.
     \end{split}
 \end{equation}
\end{lem}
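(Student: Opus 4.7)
The plan is to bound $|x'_2|$ and $|x'_3|$ using the constraint that $x' \notin \mathcal{H}_{r_{J'}}$ for each of the three $2$-dimensional cones $J' \supset J = \{1\}$, and then derive the bound on $x'_1$ from the linear relation $\x_J(x') = r_J$. By Lemma \ref{xjfibre}, under the hypothesis $r_J > K_J R^+_J$ the set $I_{J,x}$ is a $2$D-amoeba-shaped region in the plane $\x_J^{-1}(x)$, whose three arms, one into each of $\Gamma_{\{1,0\}}, \Gamma_{\{1,2\}}, \Gamma_{\{1,3\}}$, are cut off by the trimming segments $I_{J',x''}$ with $x'' \in \partial Q_{r_{J'}}$.

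To upper bound $x'_j$ for $j \in \{2,3\}$, I would use the $2$-dimensional cone $J' = \{1,j\}$, whose generator $u_j$ contributes positively to the $j$-th coordinate. Compatibility of projections writes $\x_{J'}(x')$ as a pair $(t_1, t_j)$ with $t_1 = r_J + c\, t_j$ for a constant $c$ depending only on the chosen projections. Since $r_J > K_J R^+_J$ is chosen large enough (cf. Lemma \ref{xjfibre}) one has $t_1 > s_1$, so the hypothesis $x' \notin \mathcal{H}_{r_{J'}}$ forces $t_j \leq s_j \leq R^+_J$; otherwise $x'$ would lie over the tube above $Q_{r_{J'}}$ and on the segment $I_{J', \x_{J'}(x')}$, contradicting $x' \in \mathcal{H}^{[1]}$. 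Converting this inequality back to ambient coordinates, and controlling the correction $t_j - x'_j$ via Lemma \ref{intersectS} (which bounds the transverse coordinate of any point of $\mathcal{S}_{k^\pm}$ above $Q_{r_{J'}}$ by $K_{J'}\lambda^3/(r^-_{J'})^2 \leq C\lambda^3/(R^-_J)^2$), yields $x'_j \leq R^+_J + C\lambda^3/(R^-_J)^2$. A completely analogous argument using $J' = \{1, 0\}$, whose generator $u_0 = -u_1 - u_2 - u_3$ contributes negatively to the $j$-th coordinate, yields the symmetric lower bound, hence the claimed bound on $|x'_j|$.

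The bound on $x'_1$ is then immediate. Indeed, $\x_J$ is a linear functional of the form $y \mapsto y_1 + \alpha y_2 + \beta y_3$ for constants $\alpha, \beta$ determined by the projection; setting $\x_J(x') = r_J$ gives $x'_1 = r_J - \alpha x'_2 - \beta x'_3$, and the previous bounds plug in to produce $x'_1 \geq r_J - C_J R^+_J - C_J \lambda^3/(R^-_J)^2$ after absorbing $|\alpha|, |\beta|$ into $C_J$.

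The hard part will be rigorously justifying the ``below the cut'' step for a general $x' \in I_{J,x}$, and not only for points lying exactly on a trimming segment. Specifically, one must rule out that $x'$ sits in a connected component of $\x_{J'}^{-1}(\x_{J'}(x')) \cap \mathcal{H}$ different from the segment $I_{J',\x_{J'}(x')}$ over $Q_{r_{J'}}$; this is where the amoeba structure of Lemma \ref{xjfibre} and the harmless assumption \eqref{harmless} play their role, possibly via a bootstrap to break the circular dependence between the bounds on $|x'_j|$ and the verification that $\x_{J'}(x')$ lands in the expected region of $V_{J'}$.
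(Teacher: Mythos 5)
Your route is exactly the one the paper intends: the paper in fact omits the argument, remarking only that it is ``just an application of Lemma \ref{intersectS}'', and your use of that lemma to locate the trimming cuts, together with the compatibility relation expressing $\x_J$ through $\x_{J'}$ to recover the bound on $x'_1$, is the same application. The ``hard part'' you flag at the end is not really a gap once you use the hypothesis as stated: since $r_J$ is assumed to satisfy \eqref{bound_prop}, the proof of Lemma \ref{xjfibre} already tells you that the boundary of $I_{J,x}$ inside the $2$-plane $\x_J^{-1}(x)$ consists of the three trimming segments $I_{J',x''}$ (with $x''$ on the facet of $Q_{r_{J'}}$) together with arcs of the surfaces $\mathcal S_k$, so there is no need to decide, point by point, whether an $x'$ with $\x_{J'}(x')\in Q_{r_{J'}}$ lies on the removed segment. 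Instead, note that $x'_2$ and $x'_3$ restrict to affine functions on the plane $\x_J^{-1}(x)$, hence attain their extrema over the compact closure of $I_{J,x}$ on this boundary: on the cut segments Lemma \ref{intersectS} bounds the transverse coordinate by $K\lambda^3/(r^-_{J'})^2\le C\lambda^3/(R^-_J)^2$ and the in-plane coordinate by $s_j\le R^+_J$ plus that correction, while on the arcs of $\mathcal S_k$ joining two cuts the relevant coordinate stays between its values at the endpoints (monotonicity of the hyperbola-like slice), which yields \eqref{xprime_ineq} without any component-by-component analysis.
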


We skip the proof, which is just an application of Lemma \ref{intersectS}. 

\begin{cor} \label{HrjVj} Let $E_J$ be a two dimensional face. Then there exists a positive constant $K'_J$, larger than the constant $K_J$ of Lemma \ref{xjfibre} and depending only on the projections, such that if  $r_J > K'_J R^+_{J}$ then 
\[  \mathcal H_{r_J} \subset  \mathcal \inter \mathcal V_{J}, \]
where the set on the righthand side is defined in \eqref{h_nbhoods}.
\end{cor}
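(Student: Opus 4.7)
By the symmetry of the construction (Lemma \ref{tnbhd}), it suffices to treat a single two dimensional face, say $J=\{1\}$. Then the relevant half-spaces are $D_{10}$, $D_{12}$, $D_{13}$, so
\[ \inter \mathcal V_{J} = \{ x \in M_{\R} \mid x_1 > 0, \ x_1 > x_2, \ x_1 > x_3 \}. \]
Thus the plan is to show that every point $x' \in \mathcal H_{r_J}$ satisfies these three strict inequalities, provided $r_J > K'_J R^+_J$ for a sufficiently large constant $K'_J$.

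The key input is Lemma \ref{sizeI_x}, which gives, for every $x \in Q_{r_J}$ and every $x' = (x'_1,x'_2,x'_3) \in I_{J,x}$, the estimates
\[ x'_1 \geq r_J - C_J R^+_J - \frac{C_J \lambda^3}{(R^-_J)^2}, \qquad |x'_j| \leq R^+_J + \frac{C_J \lambda^3}{(R^-_J)^2} \quad (j=2,3). \]
Next I invoke the standing assumption \eqref{harmless}, namely $R^+_J \geq \lambda^3/(R^-_J)^2$, which lets me absorb the ``$\lambda^3/(R^-_J)^2$'' correction into a multiple of $R^+_J$. Concretely,
\[ x'_1 \geq r_J - 2 C_J R^+_J, \qquad |x'_j| \leq (1+C_J) R^+_J \quad (j=2,3). \]

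With these cleaner bounds in hand, the three inequalities defining $\inter\mathcal V_J$ follow by elementary arithmetic. For positivity, $x'_1 > 0$ holds as soon as $r_J > 2 C_J R^+_J$. For the dominance $x'_1 > x'_j$ with $j=2,3$, one has
\[ x'_1 - x'_j \geq \bigl(r_J - 2 C_J R^+_J\bigr) - (1+C_J) R^+_J = r_J - (3 C_J+1) R^+_J, \]
which is strictly positive once $r_J > (3C_J+1) R^+_J$. Therefore it suffices to choose
\[ K'_J = \max\bigl\{ K_J, \ 3C_J + 2 \bigr\}, \]
the $+2$ giving some slack to ensure strict inequalities. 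With this choice, whenever $r_J > K'_J R^+_J$, every $x' \in \mathcal H_{r_J}$ lies in $\inter \mathcal V_J$, proving the corollary.

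The main obstacle is essentially bookkeeping: tracking the two distinct error terms in Lemma \ref{sizeI_x} and checking that \eqref{harmless} indeed lets me treat them uniformly as $O(R^+_J)$. Once that absorption is made, the geometric content (the ``axis-dominance'' of the one dimensional cone $\Gamma_J$ after large rescaling) is immediate from the form of the bounds. No further analytic estimate on $F$ or $\h$ is required; Lemma \ref{sizeI_x} has already packaged everything needed.
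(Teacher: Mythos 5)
Your argument is correct and is exactly the route the paper intends: the paper's own justification is the one-line remark that the corollary ``follows easily from inequalities \eqref{xprime_ineq} and condition \eqref{harmless}'', and your write-up simply makes that absorption and arithmetic explicit, with the harmless caveat that the constants depend on the chosen projections for each $J$ rather than on literal $G^*$-symmetry. No gap; nothing further is needed.
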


The proof follows easily from inequalities \eqref{xprime_ineq} and condition \eqref{harmless}. 

\begin{lem} \label{preimSmall2} Let $E_J$ be a two dimensional face and let $\epsilon \in (0,1/2)$. Then there exists a positive constant $C'_J$, depending only on the projections, such that if 
\begin{equation} \label{prop_eps}
 r_J >  \frac{C'_J}{\epsilon} R^+_{J},
\end{equation}
then $r_J$ satisfies Lemma \ref{xjfibre} and the following holds
\[ \h^{-1}( \mathcal H_{r_J}) \subset  \tilde{\mathcal W}_{J, \epsilon}. \]
\end{lem}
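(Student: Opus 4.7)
The plan is to parallel the strategy of Lemma \ref{preimSmall}, now for a two–dimensional face, with the main new difficulty being that we must bound $y_1$ relative to two coordinates $y_2, y_3$ rather than just one. First I would verify the condition of Lemma \ref{xjfibre}: requiring $C'_J \geq K_J/2$ and using $\epsilon < 1/2$, the hypothesis $r_J > C'_J R^+_J / \epsilon$ forces $r_J > K_J R^+_J$ as needed. Next, using the symmetry group $\mathcal G$ and the involution $\iota$ (Lemma \ref{eq_action}), I reduce to the case $J = \{1\}$, $y \in C^+$, and $\h(y) = x' \in \mathcal H_0$. By Corollary \ref{HrjVj} we have $x' \in \inter \mathcal V_J$, so $y \in \mathcal W^+_J \cap \mathcal W^+_{J_0}$. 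Furthermore, using the symmetry $R_2 R_3 R_2$ permuting $y_2$ and $y_3$, I may assume $y_2 \leq y_3$. From Lemma \ref{sizeI_x} and condition \eqref{harmless}, for $C'_J$ large enough we have $h_1(y) \geq r_J/2$ and $|h_k(y)| \leq 2 R^+_J$ for $k=2,3$.

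The core calculation is to convert these bounds on $\h$ into bounds on $y_1/y_2, y_1/y_3$. Using the explicit formula for $F$ in \eqref{Fglob}, one gets
\[
h_1(y) = \frac{\lambda}{3} \cdot \frac{\cos(2y_1+y_2+y_3)\,(\sin y_2)^{1/3}(\sin y_3)^{1/3}}{[\cos(y_1+y_2+y_3)]^{2/3}(\sin y_1)^{2/3}}.
\]
In $\mathcal W^+_J \cap \mathcal W^+_{J_0}$, the combined inequalities give $y_1+y_2+y_3 \leq 3\pi/8$, so $\cos(y_1+y_2+y_3)$ is bounded below by a positive constant. Using $\sin t \leq t$ and $\sin t \geq 2t/\pi$ on $[0,\pi/2]$, this yields $h_1 \leq C_1 \lambda (y_2 y_3)^{1/3}/y_1^{2/3}$, and combined with $h_1 \geq r_J/2$,
\[
y_1 \leq C_2 \lambda^{3/2}\sqrt{y_2 y_3}/r_J^{3/2}.
\]
Since $y_2 \leq y_3$, this already gives $y_1/y_3 \leq C_2 \lambda^{3/2} \sqrt{y_2/y_3}/r_J^{3/2} \leq C_2 \lambda^{3/2}/r_J^{3/2}$, small enough once $r_J \gtrsim R^+_J/\epsilon$ (using $R^+_J$ is on the order of $\lambda$).

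The harder direction is bounding $y_1/y_2$, and this is where I expect the main technical obstacle. The idea is to use the upper bound on $h_2$ to prevent $y_2$ from being too small: in the regime where $y_1$ is already controlled, the analogous computation for $h_2$ gives (away from the hyperplane $\delta_{02} = \{y_1+2y_2+y_3 = \pi/2\}$, where $\cos(y_1+2y_2+y_3)$ is bounded below) a lower bound
\[
h_2 \geq C_3 \lambda\, (y_1 y_3)^{1/3}/y_2^{2/3}.
\]
Substituting the Step--(A) bound on $y_1$ and applying $h_2 \leq 2 R^+_J$ produces a lower estimate $y_2 \geq C_4 \lambda^3/((R^+_J)^2 r_J)$. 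Combined with the upper bound on $y_1$, this yields $y_1/y_2 \leq C_5 R^+_J/r_J$, which is bounded by $3\epsilon/(1-\epsilon)$ provided $C'_J$ is large enough. The near--$\delta_{02}$ case, where $\cos(y_1+2y_2+y_3)$ degenerates, is handled by observing that points very close to $\delta_{02}$ are also close to the hyperplane $y_1+y_2+y_3=\pi/2 - y_2$, which forces $y_2$ to be bounded below by a constant directly (so the bound is immediate); alternatively one can use the symmetric estimate involving $h_3$ to cover this case.

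Finally, together with the automatic constraint $\pi/2 - y_2-y_3 \geq 2y_1$ valid on $\mathcal W^+_{J_0}$ (which gives the required bound $y_1/(\pi/2 - y_2 - y_3) \leq 3\epsilon/(2\epsilon+1)$ for $\epsilon$ small enough via the absolute smallness $y_1 \lesssim \lambda^{3/2}/r_J^{3/2}$), all the barycentric inequalities characterizing $\mathcal W^+_{J,\epsilon}$ (derived from the definition of $q_{j,\epsilon}$ in \eqref{qnearface}) are satisfied, yielding $\pi(y) \in \mathcal W^+_{J,\epsilon}$ and hence $y \in \tilde{\mathcal W}_{J,\epsilon}$.
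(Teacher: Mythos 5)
Your framing is the paper's: reduction by symmetry to $J=\{1\}$ with $y\in\mathcal W^+_{J}\cap\mathcal W^+_{J_0}$ (via Corollary \ref{HrjVj} and Lemma \ref{hNbhd}), the bounds $h_1(y)\geq M\sim r_J$ and $|h_2(y)|\leq R\sim R^+_J$ coming from Lemma \ref{sizeI_x} and \eqref{harmless}, and your first estimate $y_1\lesssim (\lambda/M)^{3/2}\sqrt{y_2y_3}$ (hence the control of $y_1/y_3$) is exactly \eqref{y1y2uno}. The gap is in the $y_1/y_2$ step. The inequality $h_2\geq C_3\lambda\,(y_1y_3)^{1/3}/y_2^{2/3}$ together with $h_2\leq 2R^+_J$ gives $y_2\gtrsim(\lambda/R^+_J)^{3/2}\sqrt{y_1y_3}$, and substituting an \emph{upper} bound on $y_1$ into this can only weaken it; it cannot produce the absolute lower bound $y_2\geq C_4\lambda^3/((R^+_J)^2r_J)$. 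Moreover no such uniform bound holds on $\h^{-1}(\mathcal H_{r_J})$: the set $Q_{r_J}$ is an unbounded ray, so $h_1$ is unbounded there, and on the leg of the fibre over a point with $\h_J$-coordinate $t\gg r_J$, near the cut toward the cone $\Gamma_{\{1,2\}}$ one has $\sin y_2\lesssim \lambda^{3}/(t\,(R^-_J)^2)$ (the cosine factors are bounded since $y_1+y_2+y_3\leq 3\pi/8$), which falls below any threshold $c\lambda^3/((R^+_J)^2 r_J)$ for $t$ large. If instead one combines the two inequalities that are actually available, namely $y_1\lesssim(\lambda/r_J)^{3/2}\sqrt{y_2y_3}$ and $y_1\lesssim (R^+_J/\lambda)^3 y_2^2/y_3$, one only gets $y_1/y_2\lesssim (R^+_J/\lambda)^{3/2}(\lambda/r_J)^{3/4}$, and the factor $(R^+_J/\lambda)^{3/2}$ is not controlled: your parenthetical ``$R^+_J$ is on the order of $\lambda$'' is not justified, because in the construction the trimming points are fixed first and $\lambda$ is chosen small afterwards (Lemma \ref{resc_lem}); only $\lambda\lesssim R^+_J$ holds, while $C'_J$ must depend on the projections alone. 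The same defect affects your last step: the remaining barycentric condition requires $\pi/2-y_2-y_3\gtrsim \epsilon^{-1}y_1$, and $\pi/2-y_2-y_3$ can itself be arbitrarily small on this set (on the leg toward $\Gamma_{\{0,1\}}$), so absolute smallness of $y_1$ does not suffice there either.

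The missing idea is the paper's ratio argument: pointwise on $\h^{-1}(\mathcal H_{r_J})$ one has $h_1(y)/h_2(y)=\cos(2y_1+y_2+y_3)\sin y_2\big/\big(\cos(2y_2+y_1+y_3)\sin y_1\big)\geq M/R$ (cf.\ \eqref{h1h2MR}), hence $\sin y_1\leq \frac{R}{M}\,\frac{\sin y_2}{\cos(2y_2+y_1+y_3)}$, with no stray powers of $R^+_J/\lambda$. The degeneration of $\cos(2y_2+y_1+y_3)$ is handled by making your ``near $\delta_{02}$'' remark quantitative as in the paper: if $2y_2+y_1+y_3\geq\pi/3$, then \eqref{yjs_inq} and $y_3\geq y_2$ force $y_2\geq\pi/24$ and the absolute bound $y_1\leq c(\lambda/M)^{3/2}$ closes that case; if $2y_2+y_1+y_3\leq\pi/3$, the cosine is bounded below and $y_1\leq\frac{\pi R}{M}y_2$ directly. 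Since $\pi/2-y_2-y_3\geq y_1+y_2$ on $\mathcal W^+_{J_0}$, the same ratio bound also yields the last barycentric inequality, and membership in $\mathcal W^+_{J,\epsilon}$ follows once $M/R\geq c'/\epsilon$, i.e.\ once \eqref{prop_eps} holds with $C'_J$ large enough. With that replacement your argument coincides with the paper's proof.
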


\begin{proof}
We can assume $J={1}$ and let $x =(x_1, 0,0) \in Q_{r_J}$. Let $x'=(x'_1, x'_2, x'_3) \in I_{J,x}$. By imposing that at least $C'_J > K'_J/2$, we can assume that Corollary \ref{HrjVj} holds. Therefore, using symmetry, we can assume that $x' \in \mathcal V_J \cap  \mathcal H_0$.  Then $x'$ satisfies the inequalities \eqref{xprime_ineq}.  By continuity we can assume that $x' \in \inter \mathcal H$. Let $y$ be the unique $y \in \inter C^+$ such that $\h(y) = x'$. Since $x' \in \mathcal V_J \cap  \mathcal H_0$ we have $y \in \mathcal W^+_{J_0} \cap \mathcal W^+_{J} $ (see Lemma \ref{hNbhd}). In particular for all $j=1,2,3$
\begin{equation} \label{yjs_inq}
  0< y_1 \leq y_j \quad \text{and} \quad 0 < 2y_j + \sum_{k \neq j} y_k  \leq \pi/2 
\end{equation} 
Moreover we can also assume
\begin{equation} \label{y3geqy2}
          y_3 \geq y_2.
\end{equation}
For simplicity denote
\[ R= R^+_J +  \frac{C_J \lambda^3}{(R^-_J)^2}     \quad    \text{and}     \quad M = r_J-C_JR^+_J- \frac{C_J \lambda^3}{(R^-_J)^2}. \]
Then inequalities \eqref{xprime_ineq} imply
\begin{equation} \label{h1M} 
          \h_1(y) = \frac{ \lambda \cos(2y_1 +y_2+y_3) \sin y_2 \sin y_3}{\left( \cos \left( \sum_j y_j \right) \sin y_1 \sin y_2 \sin y_3 \right)^{2/3}} \geq M
\end{equation}
and 
\begin{equation} \label{h1h2MR}
        \frac{\h_1(y)}{\h_2(y)} = \frac{\cos(2y_1 +y_2+y_3) \sin y_2 }{\cos(2y_2 + y_1 +y_3) \sin y_1} \geq \frac{M}{R}.
\end{equation}
Therefore, using \eqref{yjs_inq} and \eqref{y3geqy2}, \eqref{h1M} implies
\begin{equation} \label{y1y2uno} 
    y_1 \leq \frac{\pi}{2} \sin y_1 \leq c \left( \frac{\lambda}{M} \right)^{3/2}
\end{equation}
for some constant $c$. On the other hand  \eqref{h1h2MR} implies 
\begin{equation} \label{y1y2due} 
          y_1 \leq \frac{\pi}{2} \sin y_1 \leq \frac{\pi R\sin y_2 }{2 M\cos(2y_2 + y_1 +y_3)} \leq \frac{\pi R \, y_2 }{2 M \cos(2y_2 + y_1 +y_3)}.
\end{equation}
When
\[ 0 \leq 2y_2+y_1+y_3 \leq \frac{\pi}{3} \]
\eqref{y1y2due} implies
\begin{equation} \label{y1y2tre} 
          y_1 \leq \frac{\pi R }{M} \, y_2.
\end{equation}
On the other hand when
\[  \frac{\pi}{3} \leq 2y_2+y_1+y_3 \leq \frac{\pi}{2} \]
we have that \eqref{yjs_inq} and \eqref{y3geqy2} imply 
\[ y_2 \geq \frac{\pi}{24}\]
therefore, by choosing $r_J$ so that 
\begin{equation} \label{Mlarge1}
  \frac{c \lambda^{3/2}}{\sqrt{M}} \leq \frac{\pi^2 R }{24}
\end{equation}
we have that \eqref{y1y2uno} implies
\[  y_1 \leq  c \left( \frac{\lambda}{M} \right) ^{3/2} \leq  \frac{\pi R }{M} \, y_2.\]
This implies that $y \in \mathcal W^+_{J, \epsilon}$ if for some constant $c'$
\[ \frac{M}{R}  \geq \frac{c'}{\epsilon}. \]
It can be easily seen that, if \eqref{harmless} holds, we can suitably choose $C'_J$ so that if $r_J$ satisfies \eqref{prop_eps}, then both \eqref{Mlarge1} and the latter inequality hold. Thus $y \in \mathcal W^+_{J, \epsilon}$.
\end{proof}

We then have 

\begin{cor} \label{fibreBndl2} Let $E_J$ be a two dimensional face. There exists a constant $K''_J$, depending only on the projections and larger than the constant $K'_J$ of Corollary \ref{HrjVj}, such that if 
\begin{equation}  \label{ineq_fbrbndl}
         r_J> K''_J R^+_{J}
\end{equation}
then
\[ \h^{-1}(\mathcal H_{r_J}) \subset \tilde U_{J}. \]
In particular $\gb_J: \h^{-1}(\mathcal H_{r_J}) \rightarrow Q_{r_J} \times \tilde E_J $ is a diffeomorphism onto its image and $\Phi(\h^{-1}(\mathcal H_{r_J}))$ is the graph of $dG$, where $G$ is the Legendre transform of $F$ defined in Proposition \ref{faceproj}.
\end{cor}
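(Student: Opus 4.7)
The plan is to mimic the argument used for Corollary \ref{fibreBndl1}, now working with a two dimensional face $E_J$ instead of an edge. Two ingredients are needed: a neighborhood-of-face estimate (already done in Lemma \ref{preimSmall2}) and the observation that thin enough neighborhoods $\tilde{\mathcal W}_{J,\epsilon}$ sit inside the domain $\tilde U_J$ of $\y_J$.

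First I would show that there exists $\epsilon_0 \in (0,1/2)$ such that $\tilde{\mathcal W}_{J,\epsilon_0} \subseteq \tilde U_J$. This is the $n=2$ analog of the corresponding remark opening the proof of Corollary \ref{fibreBndl1}: as $\epsilon \to 0$ the set $\mathcal W_{J,\epsilon}$ collapses onto $E_J$, and since by construction $L_J$ is transversal to $E_J$, openness of transversality (together with compactness, once we blow up at the vertices $p_j$ with $j \notin J$) guarantees that $L_J$ remains transversal to $\mathcal W_{J,\epsilon}$ for all sufficiently small $\epsilon$. This precisely says $\mathcal W_{J,\epsilon_0} \subseteq U_J$, whence $\tilde{\mathcal W}_{J,\epsilon_0} \subseteq \tilde U_J$.

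Next, define
\[ K''_J = \max\Bigl\{ K'_J,\, \frac{C'_J}{\epsilon_0}\Bigr\}, \]
where $K'_J$ is the constant from Corollary \ref{HrjVj} and $C'_J$ is the constant from Lemma \ref{preimSmall2}. If $r_J > K''_J R^+_J$ then both hypotheses are satisfied, so $\mathcal H_{r_J} \subseteq \inter \mathcal V_J$ and $\h^{-1}(\mathcal H_{r_J}) \subseteq \tilde{\mathcal W}_{J,\epsilon_0} \subseteq \tilde U_J$, which is the first inclusion of the corollary.

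For the second part, the map $\gb_J$ is well defined on $\h^{-1}(\mathcal H_{r_J})$ and, by Proposition \ref{faceproj}, its restriction to $\tilde U_J$ is a diffeomorphism onto the open subset $Z_{J,L_J} \subseteq \tilde E_J \times V_J$; in particular it is injective on $\h^{-1}(\mathcal H_{r_J})$. To identify the image with $\tilde E_J \times Q_{r_J}$, I would use Lemma \ref{xjfibre}: for $r_J$ satisfying \eqref{bound_prop} the set $\mathcal H_{r_J}$ fibres over $Q_{r_J}$ via $\x_J$ with fibre $I_{J,x}$ homeomorphic to the two dimensional hyperplane amoeba $\mathcal H$. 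Hence, by the $n=1$ version of Proposition \ref{imh} applied in the fibre, $\h^{-1}(I_{J,x})$ is a two dimensional pair of pants whose projection $\y_J$ is onto $\tilde E_J$. Combining this with the commutative diagram \eqref{compbndl} shows that $\gb_J\bigl(\h^{-1}(\mathcal H_{r_J})\bigr) = \tilde E_J \times Q_{r_J}$, so $\gb_J$ is a diffeomorphism onto this product. The final claim that $\Phi(\h^{-1}(\mathcal H_{r_J}))$ is the graph of $dG$ is now immediate from Proposition \ref{faceproj}.

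The main obstacle, and the only genuinely nontrivial step, is the first: checking that $\tilde{\mathcal W}_{J,\epsilon_0} \subseteq \tilde U_J$ for small $\epsilon_0$. Everything else is bookkeeping: choosing $K''_J$ to satisfy all previously established hypotheses simultaneously, and invoking Lemma \ref{xjfibre} and Proposition \ref{faceproj} to upgrade injectivity to a diffeomorphism onto the expected product.
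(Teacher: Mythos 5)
Your first half is exactly the paper's argument: the paper's entire proof is ``choose $\epsilon$ such that $\tilde{\mathcal W}_{J,\epsilon} \subset \tilde U_J$ and apply Lemma \ref{preimSmall2}'', and your transversality/compactness justification for the existence of such an $\epsilon_0$, together with $K''_J = \max\{K'_J, C'_J/\epsilon_0\}$, is a reasonable way to make that explicit. The inclusion $\h^{-1}(\mathcal H_{r_J}) \subset \tilde U_J$ and the ``graph of $dG$'' claim then follow from Lemma \ref{preimSmall2} and Proposition \ref{faceproj}, as you say.

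However, your second half contains a genuine error: you assert that $\gb_J\bigl(\h^{-1}(\mathcal H_{r_J})\bigr) = \tilde E_J \times Q_{r_J}$, i.e.\ that $\y_J$ maps each fibre $\h^{-1}(I_{J,x})$ \emph{onto} $\tilde E_J$. This is false, and it is precisely why the statement here is phrased as ``diffeomorphism onto its image'', in contrast with Corollary \ref{fibreBndl1}. For an edge $E_J$ the fibre of $\h_J$ is a circle and $\tilde E_J$ is a circle, so a one-to-one map between them is automatically surjective; that compactness argument is what the paper uses there and it is what you are implicitly transplanting. For a two-dimensional face, $I_{J,x}$ is a compact set (the trimmed two-dimensional amoeba), and since $\|\h\| \to \infty$ along sequences escaping to the ends of $\tilde C$ (Lemma \ref{hnear_faces}), $\h^{-1}(I_{J,x})$ is a \emph{compact} trimmed pair of pants; its continuous image under $\y_J$ is compact, whereas $\tilde E_J$ is a noncompact (open) two-dimensional pair of pants, so surjectivity is impossible. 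Your appeal to ``the $n=1$ version of Proposition \ref{imh} applied in the fibre'' does not give surjectivity of $\y_J$ either, since that proposition concerns the map $\h$ on the slice, not the projection to $\tilde E_J$. Fortunately this overreach is not needed: injectivity and the local diffeomorphism property on $\tilde U_J$ come directly from Proposition \ref{faceproj}, which already yields ``diffeomorphism onto its image'' and the Legendre-transform description, so the corollary as stated is proved once the inclusion into $\tilde U_J$ is established. You should simply delete the surjectivity claim.
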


\begin{proof} Choose $\epsilon$ such that $\tilde{\mathcal W}_{J,\epsilon} \subset \tilde U_J$ and apply Lemma \ref{preimSmall2}.
\end{proof}

\begin{cor} \label{fibreBndl3}
If $r_J$ is as in Corollary \ref{fibreBndl2}, then $\h_J:  \h^{-1}(\mathcal H_{r_J}) \rightarrow Q_{r_J}$ is a fibre bundle whose fibre $\h^{-1}(I_{J,x})$ is homeomorphic to a two dimensional pair of pants.
\end{cor}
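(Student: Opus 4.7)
The plan is to use Corollary \ref{fibreBndl2}, which says that $\gb_J = (\y_J, \h_J)$ is a diffeomorphism from $\h^{-1}(\mathcal H_{r_J})$ onto its image $Z_J \subset Q_{r_J} \times \tilde E_J$. Via this diffeomorphism, $\h_J$ becomes the projection onto the first factor, so the statement splits into identifying the topological type of a single fibre and establishing local triviality.

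First I would identify the fibre. Each $I_{J,x'}$ is contained in the single fibre $\x_J^{-1}(x')$, so the decomposition $\mathcal H_{r_J} = \bigcup_{x' \in Q_{r_J}} I_{J,x'}$ is in fact a disjoint union, and consequently $\h_J^{-1}(x) = \h^{-1}(I_{J,x})$ for every $x \in Q_{r_J}$. By Lemma \ref{xjfibre}, $I_{J,x}$ is homeomorphic to the two dimensional version of $\mathcal H$. Using Lemma \ref{eq_action} (the involution $\iota$ commutes with $\h$) together with Proposition \ref{imh}, the map $\h \colon \tilde C \to \mathcal H$ is topologically the double of the diffeomorphism $\h|_{\inter C^+}$ across the boundary hypersurfaces $\mathcal S_k$. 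Restricting this picture to the slice, $\h^{-1}(I_{J,x})$ is the analogous double of a region homeomorphic to the two dimensional $\mathcal H$, which by Proposition \ref{pairpants_PLft} applied in the $n=1$ case is a two dimensional Lagrangian pair of pants.

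For local triviality, I would first show that $\x_J \colon \mathcal H_{r_J} \to Q_{r_J}$ is itself a fibre bundle with fibre $I_{J,x}$. The proof of Lemma \ref{xjfibre} describes the boundary arcs of $I_{J,x}$ explicitly as the three segments $I_{J',x''}$ indexed by the two dimensional cones $\Gamma_{J'} \supset \Gamma_J$, whose endpoints are smooth functions of $x$ via the compatible projections $\x_{J'}$. This smooth dependence yields explicit local trivializations of $\x_J$, and these trivializations lift through the branched covering structure of $\h$ identified above to local trivializations of $\h_J = \x_J \circ \h$. Equivalently, under $\gb_J$ they translate to a trivialization of the first-factor projection $Z_J \to Q_{r_J}$.

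The main obstacle is the local triviality step: because $\h_J$ is not proper, Ehresmann's theorem does not apply directly, and the bundle structure must be built by hand from the explicit smooth dependence of the slices $I_{J,x}$ on $x$. Once this is in place, the topological identification of the fibre is essentially a repackaging of Lemma \ref{xjfibre}, Lemma \ref{eq_action}, Proposition \ref{imh} and Proposition \ref{pairpants_PLft} in the two dimensional setting.
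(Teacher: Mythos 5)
Your argument is correct and follows the same route the paper intends: the paper's own proof is just the remark that the corollary ``follows directly from the previous results'', i.e.\ from Corollary \ref{fibreBndl2} (via $\gb_J$, identifying $\h_J$ with the projection to $Q_{r_J}$), Lemma \ref{xjfibre} (the slices $I_{J,x}$), and the doubling structure of $\h$ coming from Proposition \ref{imh} and Lemma \ref{eq_action}. Your elaboration of the two points the paper leaves implicit --- the identification of $\h^{-1}(I_{J,x})$ as the double of a two dimensional amoeba, and local triviality built by hand from the smooth dependence of the slices on $x$ (Ehresmann being unavailable for lack of properness) --- is exactly the content being glossed over, so there is nothing essentially different to compare.
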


This follows directly from the previous results. 

\medskip

Let us now discuss the compatibility between the fibre bundle structures given in Corollaries \ref{fibreBndl1} and \ref{fibreBndl3}. First of all let us see how the total spaces of these fibre bundles may overlap. Suppose then that $\Gamma_{J_1}$ and $\Gamma_{J_2}$ are respectively one and two dimensional cones of $\Gamma$ such that $\Gamma_{J_1} \subset \Gamma_{J_2}$. Let $r_{J_2} \in \Gamma_{J_2}$ be the point chosen to define $\mathcal H^{[1]}$ and let $r_{J_1}$ satisfy Corollary \ref{fibreBndl2}. Now choose a second point $r'_{J_2} \in \Gamma_{J_2}$ such that
\[ Q_{r_{J_2}} \subset Q_{r'_{J_2}}\]  
and $r'^{-}_{J_2} > \bar R_{J_2} \lambda$, where $\bar R_{J_2}$ is as in Corollary \ref{fibreBndl1}. We have that $\mathcal H_{r_{J_1}}$ and $\mathcal H_{r'_{J_2}}$ have non-empty intersection. Moreover we have that by construction
\[ \x_{J_2} \left( \mathcal H_{r_{J_1}} \cap \mathcal H_{r'_{J_2}} \right) = ( Q_{r'_{J_2}} - Q_{r_{J_2}}) \cap \x^{-1}_{J_1}(Q_{r_{J_1}}) \]
Now, the restriction of the commuting diagram \eqref{compbndl} gives the following

\begin{cor} \label{compbndl2}
The following diagram commutes
\begin{equation*} 
    \begin{tikzcd}
         \h^{-1} \left( \mathcal H_{r_{J_1}} \cap \mathcal H_{r'_{J_2}} \right)   \arrow[rd, "\h_{J_1}"']  \arrow[r,"\gb_{J_2}"] & \tilde E_{J_2} \times \left( ( Q_{r'_{J_2}} - Q_{r_{J_2}}) \cap \x^{-1}_{J_1}(Q_{r_{J_1}})  \right) \arrow[d]\\
         & Q_{r_{J_1}}
    \end{tikzcd}
\end{equation*}
where the horizontal arrow is a diffeomorphism and the vertical one is projection to  $( Q_{r'_{J_2}} - Q_{r_{J_2}}) \cap \x^{-1}_{J_1}(Q_{r_{J_1}})$  composed with $\x_{J_1}$.
\end{cor}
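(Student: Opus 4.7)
The plan is to deduce this corollary from the commutative diagram \eqref{compbndl} (which was already established as a consequence of having a compatible system of projections) by restricting it to the subset $\h^{-1}(\mathcal H_{r_{J_1}} \cap \mathcal H_{r'_{J_2}})$, and then combining with Corollary \ref{fibreBndl1} to get the diffeomorphism claim.

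First, for the commutativity of the diagram, I would trace a point $q \in \h^{-1}(\mathcal H_{r_{J_1}} \cap \mathcal H_{r'_{J_2}})$ through both paths. The top-right route sends $q \mapsto \gb_{J_2}(q) = (\y_{J_2}(q), \h_{J_2}(q))$, then projects to $\h_{J_2}(q) \in V_{J_2}$ and composes with $\x_{J_1}$ to land on $\x_{J_1}(\h_{J_2}(q))$. The direct route sends $q \mapsto \h_{J_1}(q) = \x_{J_1}(\h(q))$. By the compatibility relation \eqref{comp_proj} we have $\x_{J_1} \circ \x_{J_2} = \x_{J_1}$, so
\[ \x_{J_1}(\h_{J_2}(q)) = \x_{J_1}(\x_{J_2}(\h(q))) = \x_{J_1}(\h(q)) = \h_{J_1}(q), \]
which gives commutativity.

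Next, for the horizontal arrow being a diffeomorphism onto the stated target, I would invoke Corollary \ref{fibreBndl1} applied to $r'_{J_2}$ (valid by the assumption $r'^{-}_{J_2} > \bar R_{J_2}\lambda$) to conclude $\gb_{J_2}$ is a diffeomorphism from $\h^{-1}(\mathcal H_{r'_{J_2}})$ onto $\tilde E_{J_2} \times Q_{r'_{J_2}}$. Restricting to the subset $\h^{-1}(\mathcal H_{r_{J_1}} \cap \mathcal H_{r'_{J_2}})$ still yields a diffeomorphism onto its image, so it remains to show that this image equals $\tilde E_{J_2} \times \bigl( (Q_{r'_{J_2}} - Q_{r_{J_2}}) \cap \x^{-1}_{J_1}(Q_{r_{J_1}}) \bigr)$. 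To this end I would note three things: (a) since $\mathcal H_{r_{J_1}} \subset \mathcal H^{[1]}$, any $q$ in the domain satisfies $\x_{J_2}(\h(q)) \notin Q_{r_{J_2}}$; (b) $\h(q) \in \mathcal H_{r'_{J_2}}$ forces $\x_{J_2}(\h(q)) \in Q_{r'_{J_2}}$; (c) $\h(q) \in \mathcal H_{r_{J_1}}$ is equivalent to $\x_{J_1}(\h(q)) \in Q_{r_{J_1}}$, which by compatibility rewrites as $\x_{J_2}(\h(q)) \in \x^{-1}_{J_1}(Q_{r_{J_1}})$. This pins down the $V_{J_2}$-component of the image.

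The main obstacle, and the only slightly non-routine part, is checking surjectivity onto the entire product $\tilde E_{J_2} \times Z$ where $Z = (Q_{r'_{J_2}} - Q_{r_{J_2}}) \cap \x^{-1}_{J_1}(Q_{r_{J_1}})$; this amounts to showing that for every $x \in Z$, the whole fibre $\h^{-1}(I_{J_2,x})$ lies in the intersection $\h^{-1}(\mathcal H_{r_{J_1}}) \cap \h^{-1}(\mathcal H_{r'_{J_2}})$. This follows from the definition \eqref{fibrato} of $\mathcal H_{r_{J_1}}$ as a union of segments $I_{J_1,x'}$ over $x' \in Q_{r_{J_1}}$, together with the compatibility $\x_{J_1} \circ \x_{J_2} = \x_{J_1}$ which guarantees that the $\x_{J_2}$-fibre above such an $x$ is contained in the $\x_{J_1}$-fibre above $\x_{J_1}(x) \in Q_{r_{J_1}}$, and hence in $\mathcal H_{r_{J_1}}$.
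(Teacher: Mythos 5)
Your overall route is the one the paper takes: commutativity by restricting diagram \eqref{compbndl} (equivalently, from $\x_{J_1}\circ\x_{J_2}=\x_{J_1}$ in \eqref{comp_proj}), the diffeomorphism from Corollary \ref{fibreBndl1} applied to $r'_{J_2}$, and then an identification of the image, which the paper itself only asserts ``by construction''. Your commutativity argument and the containment of the image in $\tilde E_{J_2}\times Z$, where $Z=(Q_{r'_{J_2}}-Q_{r_{J_2}})\cap\x_{J_1}^{-1}(Q_{r_{J_1}})$, are fine, except that in (c) only the implication $\h(q)\in\mathcal H_{r_{J_1}}\Rightarrow\x_{J_1}(\h(q))\in Q_{r_{J_1}}$ is true (and is all you need there); the stated ``equivalence'' is false.

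The genuine gap is in the surjectivity step, precisely the point you flag as non-routine. You argue that the $\x_{J_2}$-fibre over $x\in Z$ is contained in the $\x_{J_1}$-fibre over $x'=\x_{J_1}(x)\in Q_{r_{J_1}}$ ``and hence in $\mathcal H_{r_{J_1}}$''. This inference is invalid: since $J_1$ is one dimensional, $\mathcal H_{r_{J_1}}$ meets the fibre $\x_{J_1}^{-1}(x')$ only in $I_{J_1,x'}$, the connected component of $\x_{J_1}^{-1}(x')\cap\mathcal H^{[1]}$ containing $x'$ (a two dimensional amoeba-shaped slice, not a segment), never in the whole fibre. In fact points of the removed end $\mathcal H_{r_{J_2}}$ also project under $\x_{J_1}$ into $Q_{r_{J_1}}$, so ``lying over $Q_{r_{J_1}}$'' cannot by itself give membership in $\mathcal H_{r_{J_1}}$. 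What actually has to be checked is that for every $x\in Z$ one has $I_{J_2,x}\subset I_{J_1,x'}$, which requires two facts: first, $I_{J_2,x}\subset\mathcal H^{[1]}$, because $I_{J_2,x}\subset\mathcal H_{r'_{J_2}}\subset\inter\mathcal V_{J_2}$ (Corollary \ref{hj_inVJ}) rules out the trimmed ends $\mathcal H_{r_{J'}}$ with $J'\neq J_2$, while $\x_{J_2}(I_{J_2,x})=x\notin Q_{r_{J_2}}$ rules out $\mathcal H_{r_{J_2}}$; second, $I_{J_2,x}$ lies in the \emph{same} connected component of $\x_{J_1}^{-1}(x')\cap\mathcal H^{[1]}$ as $x'$, which needs an argument such as the explicit description of the slice $I_{J_1,x'}$ and of its legs in (the proof of) Lemma \ref{xjfibre}, using that $r_{J_1}$ satisfies Corollary \ref{fibreBndl2}. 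Without these two points the surjectivity onto $\tilde E_{J_2}\times Z$, and hence the claim that the horizontal arrow is a diffeomorphism onto the stated product, is not established.
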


In particular the above implies that, if we consider a fibre $\h^{-1}(I_{J_1, x})$ of $\h_{J_1}$, then the end of the leg of this fibre corresponding to $\Gamma_{J_2}$, i.e. the set $\h^{-1}(I_{J_1, x} \cap \mathcal H_{r'_{J_2}})$, has a fibre bundle structure over a segment, with fibre a circle, induced by $\h_{J_2}$.  

\begin{defi} \label{trimming_param} Given a $\lambda$-rescaled Lagrangian pair of pants $\Phi(\tilde C)$, we say that a collection of points $\{ r_J \in \inter \Gamma_J \}_{1 \leq |J| \leq n}$ is a {\bf good set of trimming parameters} if for all $J$ with $|J| =2$ we have $r^-_J > \bar R_J \lambda$, where $\bar R_J$ is as in Corollary \ref{fibreBndl1} and for all $J$ with $|J| =1$ we have that $r_J$ satisfies \eqref{ineq_fbrbndl} so that 
\[ \h^{-1}(\mathcal H_{r_J}) \subset \tilde U_{J}. \]
\end{defi}

Given a good set of trimming parameters we define $\mathcal H^{[1]}$ as in \eqref{trim1}.  Then,  Corollary \ref{HrjVj} implies that for all $J$ with $|J| =1$, the sets $\mathcal H_{r_J}$ are pairwise disjoint. We can thus define
\[ \mathcal H^{[2]} = \mathcal H^{[1]} - \bigcup_{|J|=1} \mathcal H_{r_J}. \]
Notice that $\Phi^{-1}(\mathcal H^{[2]})$ is diffeomorphic to $\tilde C$. 

We have the following useful lemma:

\begin{lem} \label{resc_lem} Let $\epsilon_1, \epsilon_2 \in (0,1/2)$ be such that for every $J$ with $|J| =j$, $\tilde{\mathcal W}_{J, \epsilon_j} \subset \tilde U_{J}$. Let $\{ r_J \in \inter \Gamma_J \}_{1 \leq |J| \leq 2}$ be a collection of points such that for all $J$ with $|J| =1$, $r_J$ satisfies \eqref{prop_eps} for $\epsilon = \epsilon_1$. Then there exists a $\lambda>0$ such that this collection is a good set of trimming parameters for the $\lambda$-rescaled Lagrangian pair of pants. Moreover for all $J$ with $|J| =j$
\[ \h_{\lambda}^{-1}(\mathcal H_{r_J}) \subset \tilde{\mathcal W}_{J, \epsilon_j}. \]
\end{lem}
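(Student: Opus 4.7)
The approach is to observe that all conditions defining a good set of trimming parameters, together with the two containment inclusions, split into two groups: those that are either already built into the hypotheses or depend only on the (fixed) projections and the fixed $r_J$'s, and those that are $\lambda$-dependent and can be made to hold by shrinking $\lambda$. The plan is therefore to identify the finitely many $\lambda$-dependent conditions and take $\lambda$ small enough to satisfy all of them simultaneously.

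First I would handle the two-dimensional cones. For each $J$ with $|J|=2$, apply Lemma \ref{preimSmall} with the choice $\epsilon = \epsilon_2$ to obtain a constant $\bar R_J$, depending only on $\x_J$ and $\epsilon_2$, such that whenever $r_J^- > \bar R_J \lambda$ one has both that $r_J$ satisfies Corollary \ref{fibreBndl1} and that $\h_\lambda^{-1}(\mathcal H_{r_J}) \subseteq \tilde{\mathcal W}_{J,\epsilon_2}$. Since the $r_J^-$ are fixed positive numbers and there are finitely many $|J|=2$, the constraint $\lambda < \min_J r_J^-/\bar R_J =: \lambda_1$ produces the first clause of Definition \ref{trimming_param}, together with the $|J|=2$ part of the moreover statement. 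Because by hypothesis $\tilde{\mathcal W}_{J,\epsilon_2} \subset \tilde U_J$, this automatically gives $\h_\lambda^{-1}(\mathcal H_{r_J}) \subset \tilde U_J$, so the $\mathcal H^{[1]}$ trimming is well-defined.

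Next I would dispatch the auxiliary condition \eqref{harmless} by noting that $R^+_J$ and $R^-_J$ are determined by the already fixed $r_{J'}$ with $J \subset J'$, $|J'|=2$, so they are fixed positive numbers; the inequality $R^+_J \geq \lambda^3/(R^-_J)^2$ then holds for $\lambda$ smaller than some $\lambda_2 > 0$ depending only on the given data. For the one-dimensional cones I would then invoke Lemma \ref{preimSmall2} directly: the hypothesis that $r_J$ satisfies \eqref{prop_eps} with $\epsilon = \epsilon_1$ yields both that $r_J$ satisfies the condition \eqref{bound_prop} of Lemma \ref{xjfibre} and that $\h_\lambda^{-1}(\mathcal H_{r_J}) \subset \tilde{\mathcal W}_{J,\epsilon_1}$. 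Combined with the hypothesis $\tilde{\mathcal W}_{J,\epsilon_1} \subset \tilde U_J$, this gives $\h_\lambda^{-1}(\mathcal H_{r_J}) \subset \tilde U_J$, which is exactly the second clause of Definition \ref{trimming_param} as well as the $|J|=1$ part of the moreover statement.

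Setting $\lambda := \min\{\lambda_1, \lambda_2\}$ then does the job. The main subtlety is not analytic but organizational: Lemma \ref{preimSmall2} is proved inside the already-trimmed set $\mathcal H^{[1]}$ and relies on \eqref{harmless}, so one has to verify the $|J|=2$ conditions and \eqref{harmless} first in order to legitimately apply it for $|J|=1$. Once this logical order is respected, the argument reduces to a finite bookkeeping check decoupling the $\lambda$-independent inputs (the choices of $\epsilon_j$ and the given $r_J$) from the $\lambda$-dependent ones (the two bounds $\lambda < \lambda_1$ and $\lambda < \lambda_2$).
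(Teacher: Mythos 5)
Your proof is correct and follows essentially the same route as the paper: take the constants $\bar R_J$ from Lemma \ref{preimSmall} with $\epsilon=\epsilon_2$ for the two-dimensional cones, shrink $\lambda$ so that $r_J^->\bar R_J\lambda$, and then let Lemma \ref{preimSmall2} together with the hypothesis \eqref{prop_eps} (with $\epsilon=\epsilon_1$) handle the one-dimensional cones. Your additional bookkeeping on \eqref{harmless} and on the logical order (two-dimensional cones before one-dimensional ones) only makes explicit what the paper leaves implicit.
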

\begin{proof} Let $\bar R_J$ be the constants satisfying Lemma \ref{preimSmall} for $\epsilon = \epsilon_2$. Then there exists $\lambda$ such that for all $J$ with $|J|=2$, $r^-_{J} >  \bar R_J \lambda$. Then Lemma \ref{preimSmall2} and the hypothesis guarantee that the given collection is a good set of trimming parameters for the Lagrangian pair of pants. 
\end{proof}

\subsection{Estimating the fibres over the ends of $1$-dimensional cones} \label{est_fibr}
We consider a three dimensional $\lambda$-rescaled Lagrangian pair of pants $\Phi(\tilde C)$. Given a two dimensional face $E_{J}$, we establish a result which allows some control on the image of the map $\gb_J: \tilde U_{J} \rightarrow  \tilde E_J \times V_{J}$. For this purpose we introduce some special subsets of $\tilde E_J$. 
 \begin{figure}[!ht] 
\begin{center}
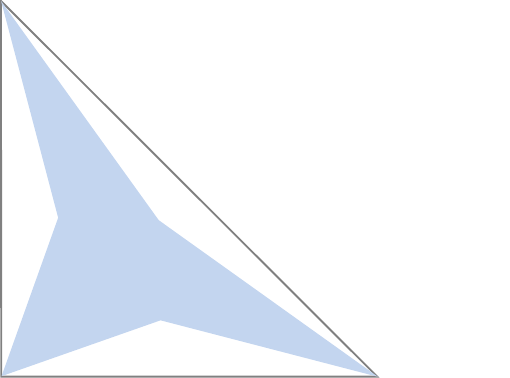
\caption{} \label{co_am_like}
\end{center}
\end{figure}
Consider $E_{J}$ as a $2$-dimensional Lagrangian coamoeba. Given some $\epsilon \in (0,1/2)$, for every one dimensional face $E_{J'}$ of $E_J$ define the subset $\mathcal W^+_{J,J', \epsilon} \subset E^+_{J}$ exactly as we defined the sets $\mathcal W^+_{J, \epsilon}$ in \eqref{wsets}, but where everything is done inside $E^+_J$ instead of $C^+$.  Then let $\mathcal W_{J,J', \epsilon}$ and $\tilde{\mathcal W}_{J,J', \epsilon}$ be as usual. Now define 
\[ \mathcal A^+_{J, \epsilon} = E_J^{+} - \bigcup_{J \subset J', |J'| = 2 } \inter( \mathcal W^+_{J,J', \epsilon}), \]
see Figure \ref{co_am_like}. Notice that vertices are included in $\mathcal A^+_{J, \epsilon}$. Let $\mathcal A_{J, \epsilon}$ and $\tilde{\mathcal A}_{J, \epsilon}$ be as usual. We have that $\tilde{\mathcal A}_{J, \epsilon}$ is a compact subset of $\tilde E_{J}$ and its interior is homeomorphic to $\tilde E_J$. 
 Let $\epsilon_1 \in (0, 1/2)$ be such that 
 \[ \tilde{\mathcal W}_{J, \epsilon_1} \subset \tilde U_{J}. \]
Then we have the following

\begin{lem} \label{size_fibr}  Let $\epsilon, \epsilon_1 \in (0, 1/2)$ and $\tilde{\mathcal A}_{J, \epsilon}$ be as above and let $K$ be a neighborhood of the origin in $L_{J}^{\perp}$. Then there exists a $\bar R \in \Gamma_J$ such that for all $r_J > \bar R \lambda$ we have
\[ \tilde{\mathcal A}_{J, \epsilon} \times Q_{r_J} \subseteq \gb_J(\tilde{\mathcal W}_{J, \epsilon_1} ). \]
Moreover, if we identify $V_J \times L_{J}^{\perp}$ with $M_{\R}$ via $(v, r) \mapsto r+v$, we have for all $r \in Q_{r_J}$ 
\[ \h(\gb_J^{-1}(\tilde{\mathcal A}_{J, \epsilon} \times \{r \}) ) \subseteq \{ r \} \times K. \]
\end{lem}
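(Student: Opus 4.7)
Take $J=\{1\}$ as the representative case (the other $2$-dimensional faces follow by the $\mathcal G$-equivariance of $F$ and $\h$, Lemma \ref{eq_action}) and assume the projection is orthogonal, so that $L_J=\spn\{u_1^*\}$, $\y_J[y_1,y_2,y_3]=[0,y_2,y_3]$, and $L_J^\perp=\spn\{u_2,u_3\}$; the general case only modifies constants by a uniformly bounded factor.

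The key analytic input is an asymptotic expansion of $F$ near $E_J^+=\{y_1=0\}\cap\bar C^+$. Writing $F^3=\cos(y_1+y_2+y_3)\sin y_1\sin y_2\sin y_3$ and differentiating, one obtains, uniformly on any compact $K_0\subset\inter E_J^+$,
\[
h_1(y)=\frac{c(y_2,y_3)}{3}\,y_1^{-2/3}(1+O(y_1)), \qquad h_j(y)=O(y_1^{1/3})\ \ (j=2,3),
\]
with $c(y_2,y_3)=(\cos(y_2+y_3)\sin y_2\sin y_3)^{1/3}>0$. Solving the first relation for large $h_1=r_1$ yields $y_1\sim(c/(3r_1))^{3/2}$, hence $|h_j(y)|=O(r_1^{-1/2})$ for $j=2,3$, with implicit constants uniform on $K_0$. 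Re-expanded in blow-up coordinates, the same bounds hold near each exceptional fibre $\pi^{-1}(p_k)\cap\tilde E_J$ with $k\notin J$, provided one stays bounded away from the $1$-dimensional faces of $E_J$; this is precisely the effect of excising $\bigcup_{J\subset J',\,|J'|=2}\inter\mathcal W^+_{J,J',\epsilon}$ when defining $\tilde{\mathcal A}^+_{J,\epsilon}$, so the expansion is uniform on $\tilde{\mathcal A}^+_{J,\epsilon}$.

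Fix now $(a,r)\in\tilde{\mathcal A}_{J,\epsilon}\times Q_{r_J}$; by the $\iota$-symmetry (Lemma \ref{eq_action}) we may assume $a\in\tilde{\mathcal A}^+_{J,\epsilon}$, with coordinates $(y_2^0,y_3^0)$ (possibly a blow-up direction at a vertex). The fibre $\y_J^{-1}(a)\cap\tilde U_J\cap\widetilde{C^+}$ is the curve $y_1\mapsto(y_1,y_2^0,y_3^0)$ with $y_1>0$ small, along which $h_1$ decreases monotonically from $+\infty$. By the uniform expansion, there is a constant $\bar R$, depending only on $\tilde{\mathcal A}_{J,\epsilon}$, $\epsilon_1$ and the projection, such that for all $r_1>\bar R\lambda$ a unique $q=\gb_J^{-1}(a,r)$ sits on this curve. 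The argument of Lemma \ref{preimSmall2}, using $y_1=O(r_1^{-3/2})$ and $(y_2^0,y_3^0)$ bounded away from the edges of $E_J^+$, places $q$ inside $\tilde{\mathcal W}_{J,\epsilon_1}$, giving the first inclusion. Enlarging $\bar R$ if necessary so that the uniform bound $|h_j(q)|=O(r_1^{-1/2})$ forces $v:=\h(q)-r\in K$ for the prescribed neighborhood $K\subset L_J^\perp$ yields the moreover statement.

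The main technical difficulty is making the asymptotic expansion of $F$ uniform near the blow-up fibres $\pi^{-1}(p_k)\cap\tilde{\mathcal A}_{J,\epsilon}$, where several $\sin$-factors in $F$ vanish simultaneously: one must re-expand in blow-up coordinates at each vertex and verify that $h_1\to+\infty$ and $h_j\to 0$ ($j=2,3$) still hold uniformly across all blow-up directions in $\tilde{\mathcal A}_{J,\epsilon}$. The separation of $\tilde{\mathcal A}_{J,\epsilon}$ from the $1$-dimensional faces of $E_J$ is what keeps the relevant ratios of vanishing sine factors bounded, and is therefore essential for the uniformity of the estimates.
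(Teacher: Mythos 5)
Your route is genuinely different from the paper's. You prove the lemma quantitatively, by expanding $\h$ near $E_J^+$ (the leading term $h_1\sim \tfrac{c(y_2,y_3)}{3}y_1^{-2/3}$ and $h_j=O(y_1^{1/3})$ for $j\notin J$ are correct), solving $h_1=r$ for $y_1\sim r^{-3/2}$ and deducing $|h_j|=O(r^{-1/2})$. The paper instead argues softly: it observes that $\mathcal A'_{\epsilon}=\y_J^{-1}(\tilde{\mathcal A}_{J,\epsilon})\cap\partial\tilde{\mathcal W}_{J,\epsilon_1}$ is compact, sets $\bar R>\max_{\mathcal A'_{\epsilon}}\h_J$, and gets the first inclusion by an intermediate value argument along the one-dimensional fibres of $\y_J$ (on which $\h_J$ is injective by Proposition \ref{faceproj}/Corollary \ref{submers} and tends to $+\infty$ towards the face, by Lemma \ref{hnear_faces}); the second inclusion is the limit $h_k\to 0$, $k\notin J\cup\{0\}$, of Lemma \ref{hnear_faces}, made uniform by compactness of $\tilde{\mathcal A}_{J,\epsilon}$ in the blow-up $\tilde E_J$. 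Your expansion yields more (explicit rates in $r$), but none of it is needed for the statement.

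The weak point is exactly the one you flag yourself and then only assert: uniformity of the estimates across the exceptional fibres $\pi^{-1}(p_k)$, $k\notin J$. There $c(y_2,y_3)$ degenerates, and the fibre of $\y_J$ over a blow-up point of $\tilde E_J$ lies in $\pi^{-1}(p_k)\subset\tilde C$, where $\h$ takes values on the boundary hypersurface $\mathcal S_k$ (Proposition \ref{imh}), so the naive interior expansion does not literally apply; one must redo the estimate in blow-up coordinates and check that excising the sets $\mathcal W^+_{J,J',\epsilon}$ really bounds the relevant ratios (it does: e.g.\ on $\mathcal S_0$, $x_1=r$ forces $x_2x_3=O(\lambda^3/r)$, and the bounded ratio $x_2/x_3$ then gives $x_2,x_3=O(r^{-1/2})$), but this verification is missing from your write-up, and both your reduction to the orthogonal projection "up to bounded constants" and your appeal to "the argument of Lemma \ref{preimSmall2}" to place $q$ in $\tilde{\mathcal W}_{J,\epsilon_1}$ near a vertex rely on the same uniformity. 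This is precisely what the paper's compactness-plus-IVT argument sidesteps: since $\h$ is continuous on $\tilde C$ and $\tilde{\mathcal A}_{J,\epsilon}$ is compact in $\tilde E_J$, the qualitative limits of Lemma \ref{hnear_faces} become uniform with no expansion at all. So either complete the blow-up analysis, or replace it by the compactness argument.
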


\begin{proof} It is enough to prove the case $\lambda =1$. Consider the boundary $\partial \tilde{\mathcal W}_{J, \epsilon_1} $ of $\tilde{\mathcal W}_{J, \epsilon_1}$, then it is easy to see that 
\[ \mathcal A'_{	\epsilon} = \y_{J}^{-1}(\tilde{\mathcal A}_{J, \epsilon}) \cap \partial \tilde{\mathcal W}_{J, \epsilon_1} \]
is a compact subset of $\tilde C$.  Let
\[ \bar R > \max_{\mathcal A'_{\epsilon}} \h_{J}. \]
It is now easy to see that for any $r > \bar R$ and any $y' \in \tilde{\mathcal A}_{J, \epsilon}$, there exists $y \in \tilde{\mathcal W}_{J, \epsilon_1}$ such that $\gb_{J}(y) = (r, y')$. 
Indeed let 
\[ y_0 = \y_{J}^{-1}(y') \cap \partial \tilde{\mathcal W}_{J, \epsilon_2}. \] 
Then, since $\h_{J}(y_0) < \bar R < r$ and $\lim_{y \rightarrow y'} \h_{J}(y) = + \infty$, there exists a $y \in \y_{J}^{-1}(y')$ such that $\h_{J}(y) = r$ (recall that $\y_{J}^{-1}(y')$ is one dimensional). Thus $\gb_{J}(y) = (r, y')$. This proves that $\{ r \} \times \tilde{\mathcal A}_{J, \epsilon}$ is in the image of $\gb_{J}$ and hence the first part of the statement if we take $r_J > \bar R$. 

To prove the last inclusion, we can assume $J= \{ 1 \}$. As $r \rightarrow \infty$ we have that $\gb_J^{-1}(\{ r \} \times \tilde{\mathcal A}_{J, \epsilon})$ approaches the face $\tilde E_J$. Thus, by Lemma \ref{hnear_faces}, the components $h_2$ and $h_3$ of $\h$ restricted to $\gb_J^{-1}(\{ r \} \times \tilde{\mathcal A}_{J, \epsilon})$ converge to $0$ as $r \rightarrow +\infty$. By compactness of $\tilde{\mathcal A}_{J, \epsilon}$, this convergence is uniform. Thus by taking a larger $\bar R$ also the last inclusion of the lemma holds. 
\end{proof}

\section{Lagrangian lifts of smooth tropical hypersurfaces} \label{main_results}
In this section we finally prove Theorem \ref{main_thm} for the case of tropical hypersurfaces in $\R^3$. Let $\dim N_{\R} = 3$ and fix a smooth tropical hypersurface $\Xi \subset M_{\R}$ given by a pair $(P, \nu)$ as in \S \ref{trop_hyper}. Let $\hat \Xi$ be the Lagrangian $PL$-lift of $\Xi$ inside $M_{\R} \times N_{\R}/N$ (see \S \ref{LagPLift}). We will use the following notation: given two point $q, q' \in M_{\R}$ we will denote
\[ [q,q'] = \conv \{ q, q' \}, \]
i.e. the closed segment from $q$ to $q'$. 
  
 \subsection{Compatible systems of projections} \label{global_proj} For every $k$-dimensional face $e \in (P, \nu)$, with $k=1,2$, define the following subspaces
 \begin{itemize}
 \item $N^e_{\R}$ is the $k$-dimensional vector subspace of $N_{\R}$ parallel to $e$;
 \item $T_e \subset T$ is the smallest affine subtorus of $T$ which contains $C_e$;
 \item $M^{\check e}_{\R}$ is the $(3-k)$-dimensional vector subspace  of $M_{\R}$ parallel to $\check e$;
 \item $V_{\check e}$ is the smallest affine subspace of $M_{\R}$ which contains $\check e$.
 \end{itemize} 
Obviously $N^e_{\R}$ is of the form $N^{e}_{\R} = N^e \otimes \R$, where $N^e = N^{e}_{\R} \cap N$ and similarly $M^{\check e}_{\R} = M^{\check e} \otimes \R$, where $M^{\check e} = M^{\check e}_{\R} \cap M$. 

\medskip

Choose a $(3-k)$-dimensional vector subspace $L_e \subset N_{\R}$ which is transverse to $N^e_{\R}$. This defines a unique projection $\y_e: N_{\R} \rightarrow N^{e}_{\R}$ such that $\ker \y_e = L_e$. We say that the collection of these choices forms a {\it compatible system of projections} for $(P, \nu)$ if, whenever $f \preceq e$, then $L_e \subset L_f$. This implies that $\y_f \circ \y_e = \y_f$. 
We will use the same notation to denote the projection onto $T_e$, which is well defined on suitable open neighborhoods of $T_e$, as $\y_e([y']) = [y]$ where $[y] \in T_e$ is such that $y-y' \in L_e$. 

Dually the $k$-dimensional vector subspace $L_e^{\perp}$ is transverse to $M^{\check e}_{\R}$ and it defines the projection $\x_e: M_{\R} \rightarrow V_{\check e}$ such that $\x_e(x')= x$ where $x-x' \in L_{e}^{\perp}$. Compatibility of projections implies that if $f \preceq e$ then $L^{\perp}_f \subset L^{\perp}_e$ and $\x_e \circ \x_f = \x_e$. It is easy to construct a compatible system of projections, for instance one can introduce an inner product on $N_{\R}$ and define $L_e$ to be the orthogonal complement of $N_e$. 

\medskip
 
 As in Lemma \ref{cotangent}, the choice of $L_e$ induces a natural linear symplectomorphism between the cotangent bundle of $V_{\check e} \times T_e$ and $(V_{\check e} \times T_e) \times (L^{\perp}_e \times L_e)$. Moreover the latter is naturally a covering of $M_{\R} \times N_{\R}/N$ via 
\begin{equation} \label{cotan_map}     
\begin{split}
         (V_{\check e} \times T_e) \times (L^{\perp}_e \times L_e) & \longrightarrow M_{\R} \times N_{\R}/N \\
              \left((q, y), (v,w) \right) & \mapsto (q+v, [y+w])
        \end{split} 
\end{equation}
which is a local symplectomorphism.
 
\begin{rem} \label{edge_prod_sympl} Notice that $L^{\perp}_{e}$ and $L_{e}$ can be naturally identified with the cotangent fibres of $T_{e}$ and $V_{\check{e}}$ respectively, thus $(V_{\check e} \times T_e) \times (L^{\perp}_e \times L_e)$ can also be viewed as $T^*V_{\check{e}} \times T^*T_{e}$, i.e. as $( L_e \times V_{\check e}) \times (L^{\perp}_e \times T_e)$. Indeed the symplectic form induced on $(V_{\check e} \times T_e) \times (L^{\perp}_e \times L_e)$ as a covering of $M_{\R} \times N_{\R}/N$ coincides with the symplectic form $(- \omega') \oplus \omega''$ where $\omega'$ and $\omega''$ \todo{check} are the canonical symplectic forms on $T^*V_{\check e}$ and $T^*T_e$ respectively (see Lemma \ref{cotangent}). 

\end{rem}

 \subsection{Tangent tropical hyperplanes, coamoebas and projections} \label{tgt_coa_proj} Let $e \in (P, \nu)$ be of dimension $3$. Recall definition \eqref{star_neigh} of the star-neighborhood $\Xi_{\check e}$. Define the tangent tropical hyperplane $\Gamma_e \subseteq M_{\R}$ to be the cone of this set with center $\check e$, i.e. 
 \[ \Gamma_e = \{ \check e + t(v-\check e) \in M_{\R} \, | \, v \in \Xi_{\check e} \ \text{and} \ t \in \R_{\geq 0} \}. \]
Notice that $\check e$ is the vertex of $\Gamma_e$. 
 
Now let $e  \in (P, \nu)$ be $k$-dimensional, with $k=1, 2$. As we saw in the previous subsection a covering of $M_{\R} \times N_{\R}/N$ can be written as $(V_{\check e} \times L_{e}) \times (L^{\perp}_{e} \times T_{e})$. 
Given the natural identification of $V_{\check{e}} \times L^{\perp}_{e}$ with $M_{\R}$, fix a point $q \in \inter(\check{e})$ and define
\begin{equation} \label{trop_along_edg}
     \Gamma_e = \{ t v \in L^{\perp}_{e} \, | \, q + v \in \Xi_{\check e}  \ \text{and} \ t \in \R_{\geq 0} \}.
\end{equation}
Obviously $\Gamma_e$ is independent of $q$. The choice of a point on $T_{e}$ uniquely identifies $T_e$ with $N_{\R}^{e}/ N^{e}$ (see \S \ref{global_proj} for notation). On the other hand since $L^{\perp}_{e}$ is naturally a cotangent fibre of $T_e$, it inherits from $T_e$ an integral structure, thus it can be written as $L^{\perp}_{e} = M^{e} \otimes \R = M^{e}_{\R}$ where $M^{e}$ is the dual lattice of $N^{e}$. Thus we have an identification 
\begin{equation} \label{edge_2torus}
 L^{\perp}_{e} \times T_{e} \cong M^{e}_{\R} \times N^{e}_{\R} / N^{e}.
\end{equation}

\medskip 

For any $e \in (P, \nu)$ with $\dim e = k \geq 1$, there is a one to one correspondence between $\ell$-dimensional cones of $\Gamma_e$ and $(3-k+\ell)$-dimensional polyhedra $\check f$ containing $\check e$.  Let us denote this correspondence by 
\[ \check f \mapsto \Gamma_{e,f}.\]
The cone $\Gamma_{e,f}$ is dual to the face $C_{e,f}$ of $C_e$. \todo{notation $C_{e,f}$ (or $C_e$) defined?}
 Notice that the smallest affine subspace containing $\Gamma_{e,f}$ is $V_{\check f}$ when $\dim e = 3$ or  $L^{\perp}_{e} \cap M^{\check f}_{\R}$ when $\dim \check e=1, 2$, where $M^{\check f}_{\R}$ is as in \S \ref{global_proj}. 
 
Given a $3$ dimensional $e \in (P, \nu)$ and the corresponding coamoeba $C_e$, we have a compatible system of projections $\{ \y_{e,f} \}_{f \preceq e}$ on the faces of $\tilde C_e$, where $\y_{e,f}$ is the projection onto $\tilde C_{e,f}$ induced by $\y_{f}$. Denote by $\tilde U_{e,f}$ the open subset of $\tilde C_e$ where $\y_{e,f}$ is well defined (see Definition \ref{projFace1}). 
Dually we have the projections $\{ \x_{e,f} \}_{f \preceq e}$ onto the cones $\Gamma_{e,f}$ of $\Gamma_e$, induced by the projections $\x_f$.

Similarly, led $d \in (P, \nu)$ be two dimensional and let $f$ be an edge of $d$. Then $N^f_{\R} \subset N^{d}_{\R}$ and the restriction of $\y_f$ to $N^{d}_{\R}$ induces a projection $\y_{d,f}: N^{d}_{\R} \rightarrow N^{f}_{\R}$ whose kernel is $L_{d,f} = N^{d}_{\R} \cap L_{f}$. The dual of $N^{d}_{\R}$ is identified with $L^{\perp}_{d}$ and, by compatibility of projections, the restriction of $\x_f$ to $L^{\perp}_{d}$ gives a projection $\x_{d,f}: L^{\perp}_{d} \rightarrow L^{\perp}_{d} \cap M^{\check f}_{\R}$ whose kernel is $L^{\perp}_{f}$. Clearly $\x_{d,f}$ is dual to $\y_{d,f}$, thus the collections $\{ \y_{d,f} \}_{f \preceq d}$ and $\{ \x_{d,f} \}_{f \preceq d}$ give a compatible system of projections onto the edges of $\tilde C_d$ and cones of $\Gamma_d$. We let $\tilde U_{d,f}$ be the subsets of $\tilde C_d$ where $\y_{d,f}$ is well defined as a projection onto $\tilde C_{d,f}$.

 \subsection{Local coordinates} \label{loc_coo} Given a $3$ dimensional (resp. of dimension $k=1, 2$) face $e$ of $(P, \nu)$ and the tangent tropical hyperplane $\Gamma_e$ at $\check e$, we can choose a basis $\{ u_1, \ldots, u_{3} \}$ of $M$ (resp. $\{ u_1, \ldots, u_{k} \}$ of $M^e$) such that each $u_j$ is an integral primitive generator of a one dimensional cone of $\Gamma_e$. This basis and the choice of $\check e$ as the origin defines affine coordinates $x=(x_1, \ldots, x_{3})$ on $M_{\R}$ (resp. $M^{e}_{\R}$) which identify $\Gamma_e$ with the standard tropical hyperplane $\Gamma$. Dually, let $\{u^*_1, \ldots, u^*_{3} \}$ be a basis of $N_{\R}$ (resp. $\{u^*_1, \ldots, u^*_{k} \}$ of $N^e_{\R}$) satisfying \eqref{dual_basis}. Then this basis and the choice of a vertex of the coamoeba $C_e$ as the origin of $T$ (resp. of $T_e$) defines coordinates $y=(y_1, \ldots, y_{3})$ such that $C_e$ is identified with the standard Lagrangian coamoeba $C$. It is clear that such a choice of coordinates is unique up to a transformation in the group $G^*$ and in its dual $G$.  For every $f \preceq e$ there is a unique face $E_{J_{e,f}}$ of $C$ which, in these coordinates, corresponds to $C_{e,f}$. Moreover $\Gamma_{J_{e,f}}$ corresponds  $\Gamma_{e,f}$. 

In the previous sections we defined some useful subsets of $\Gamma$ and $\tilde C$ related to their cones and faces, such as the subsets $\tilde{\mathcal W}_{J, \epsilon}$ or $\tilde{\mathcal A}_{J, \epsilon}$ of $\tilde C$. Via the above coordinates, all of these correspond to subsets of  $\Gamma_e$ or $\tilde C_e$. In order to simplify notation, when $f \preceq e$, we will do the following relabeling
\[ \tilde{\mathcal W}^{e,f}_{\epsilon} := \tilde{\mathcal W}_{J_{e,f}, \epsilon} \]
and similarly for the other subsets. 

\subsection{Inner polyhedrons} \label{inner_pol}
Let $\check f$ be a polyhedron of $\Xi$ of dimension either $1$ or $2$. Choose (and fix) a point $b_{\check f}$ in the relative interior of $\check f$ (e.g. the barycenter of $\check f$ if bounded). 
\begin{figure}[!ht] 
\begin{center}
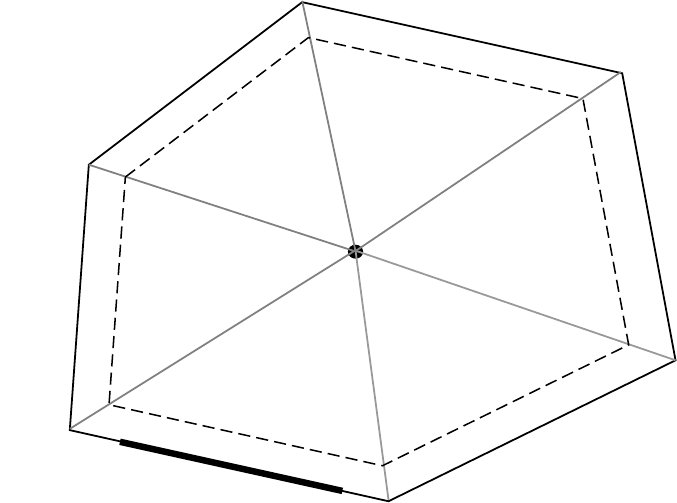
\caption{Enclosed by dashed lines is the inner polyhedron $\rho_f$ of $\check f$. The thicker black line represents the inner polyhedron $\rho_d$ of an edge $\check d$ of $\check f$.} \label{ref_points}
\end{center}
\end{figure}
Now consider, inside $\check f$, a polyhedron which is a rescaling of $\check f$ with center $b_{\check f}$. We call it an {\it inner polyhedron} of $\check f$ and denote it by $\rho_f$. In Figure \ref{ref_points}, $\rho_f$ is drawn in dashed lines when $\check f$ is two dimensional, while the inner polyhedron of an edge $\check d$ of $\check f$ is drawn as a thick black line. Given a face $\check e$ of $\check f$ (i.e. an edge or vertex), let $r_{e,f}$ be the face of $\rho_f$ corresponding to $\check e$. 

We will need three collections of inner polyhedrons $\{ \rho_f \}, \{ \rho'_f \}$ and $\{ \rho''_f \}$ satisfying the following strict inclusions
\begin{equation} \label{inner_pols_incl}
 \rho'_f \subset \rho''_f \subset \rho_f. 
\end{equation}
We will denote by $r_{e,f}$, $r'_{e,f}$ and $r''_{e,f}$ the corresponding faces.
We choose inner polyhedrons so that they satisfy the following property
\begin{enumerate}
   \item  For any two dimensional $d \in (P, \nu)$, any edge $f \preceq d$ and any $q \in \rho_d$, the affine plane $ q  \times L^{\perp}_{d}$ intersects the interior of the edges $r_{d,f}$, $r'_{d,f}$ and $r''_{d,f}$ in a point which we denote respectively by $(q, p_{d,f})$, $(q, p'_{d,f})$ and $(q, p''_{d,f})$.  Obviously $p_{d,f}$, $p'_{d,f}$ and $p''_{d,f}$  are independent of $q$ and they lie in the interior of the cone $\Gamma_{d,f}$ of $\Gamma_d$.
\end{enumerate}

When $\check f$ is two dimensional, we can use this data to subdivide it as in Figure \ref{innerpol_subdiv}. The elements of this subdivision are: the inner polyhedron $\rho_f$, a parallelogram $Y_{d,f}$ for each edge $\check d$ of $\check f$ and a polyhedral (non-convex) shape $Y_{e,f}$ for each vertex $\check e$ of $\check f$. 
For instance $Y_{d,f}$ is constructed as follows: one of its edges is the inner polyhedron $\rho_{d}$, the opposite edge is obtained by translating $\rho_d$ by the vector $p_{d,f}$ defined above. By property (1) above, the latter edge is contained in $r_{d,f}$. When $\check e$ is a vertex the definition of $Y_{e,f}$ follows similarly.

\begin{figure}[!ht] 
\begin{center}
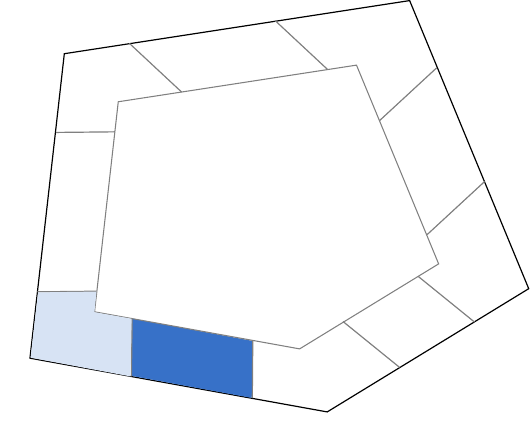
\caption{} \label{innerpol_subdiv}
\end{center}
\end{figure}

For every $e \in (P, \nu)$ of dimension $3$ or $2$ we define
\begin{equation} \label{Y_sets}
 Y_{e} = \bigcup_{f \preceq e, \dim f=1} Y_{e,f}.
\end{equation}

We will denote by $Y'_{e,f}$ and $Y''_{e,f}$ the elements of the subdivision induced by the collections $\{ \rho'_{f} \}$ and $\{ \rho''_{f} \}$ respectively and by $Y'_e$ and $Y''_e$ their corresponding union as in \eqref{Y_sets}.

For every three dimensional $e$ and every edge $f$ of $e$, the reference point $r'_{e,f}$ is on the cone $\Gamma_{e,f}$ of $\Gamma_e$ therefore we can use it to define the sets $Q_{r'_{e,f}}$ as in \eqref{qsets}. Denote 
\begin{equation} \label{trunc_Gamma_e}
      \Gamma_e^{[1]} = \Gamma_e - \bigcup_{f \preceq e, \dim f=1} Q_{r'_{e,f}}.
\end{equation}
Similarly if $d$ is two dimensional and $f$ is an edge of $d$, the points $p'_{d,f}$ are on the cone $\Gamma_{d,f}$ of $\Gamma_d$. Thus they define subsets $Q_{p'_{d,f}}$ of $\Gamma_{d,f}$. We define 
\begin{equation} \label{trunc_Gamma_d}
          \Gamma_d^{[1]} = \Gamma_d -  \bigcup_{f \preceq d, \dim f=1} Q_{p'_{d,f}}.
\end{equation}

\subsection{Neighborhoods} \label{intorni} For every vertex $\check e$ of $\Xi$, let $B_{e}$ be a small convex open neighborhood  of the set $Y'_e$ in $M_{\R}$. For every two dimensional $d$, let $B_{d}$ be a bounded convex open subset  inside $L^{\perp}_{d}$ which contains $\Gamma_d^{[1]}$. 
For every one dimensional $f$, fix an open interval $B_f \subset L^{\perp}_f$ containing the origin. 
When $d$ is two dimensional and $f \preceq d$, then $B_f \subset L^{\perp}_d$. Define 
 \[ B_{d,f} =  B_f \times Q_{p_{d,f}}, \]
which is a convex neighborhood of the set $Q_{p_{d,f}} \subset \Gamma_{d,f}$ (see Figure \ref{neighborhoods_d}). 
We require that the inner polyhedrons and these neighborhoods satisfy the following properties
\begin{enumerate}
\item the subsets $\{ B_{e} \}_{\dim e=3}$ are pairwise disjoint;
\item if $j=1$ or $2$ the subsets  $\{ \rho_f \times B_{f} \}_{\dim f=j}$ are pairwise disjoint;
\item when $\dim e=3$ and $\dim f=1$ or $2$, then $B_{e} \cap (\rho_f \times B_f) \neq \emptyset$ if and only if $f \preceq e$;
\item when $\dim d=2$ and $\dim f=1$ then $(\rho_d \times B_d) \cap (\rho_f \times B_f) \neq \emptyset$ if and only if $f \preceq d$. 
\item for all $(d,f)$ with $\dim d =2$ and $f$ and edge of $d$ \todo{is this needed?}
\[ [p_{d,f}, p'_{d,f}] \times B_f \subset B_d; \] 
\item for all $(e,d)$ with $\dim e= 3$ and $d$ a two dimensional face of $e$ \todo{i don't think we need this condition}
\[ [r_{e,d}, r'_{e,d}] \times B_d\subset B_e; \]
\item for all $(e,d)$ with $\dim e= 3$ and $d$ a two dimensional face of $e$
\begin{equation} \label{kd_gamma}
        (Q_{r_{e,d}} \times B_d) \cap \Gamma^{[1]}_e = Q_{r_{e,d}} \times \Gamma_d^{[1]}, 
  \end{equation}
  where $\Gamma^{[1]}_e$ and  $\Gamma_d^{[1]}$ are defined in \eqref{trunc_Gamma_e} \eqref{trunc_Gamma_d} respectively.
\item for all $(e,f)$ with $\dim e= 3$ and $f$ and edge of $e$
\[ Q_{r_{e,f}} \times B_f \subset \mathcal V_{e,f}. \]
 \end{enumerate}

It is easy to see that the inner polyhedrons and the neighborhoods can be chosen so that conditions $(1)-(8)$ hold. 
Condition (8) also implies that for all edges $f$ of $e$,
 \begin{equation} \label{bief_gamma}
        (Q_{r_{e,f}} \times B_f) \cap \Gamma_e = Q_{r_{e,f}}. 
  \end{equation}
Moreover it also implies that for all $(d,f)$ with $\dim d =2$ and $f$ an edge of $d$
\begin{equation} \label{bidf_vdf}
B_{d,f} \subset \mathcal V_{d,f} 
\end{equation}
and
\begin{equation} \label{bidf_gammad}
       B_{d,f} \cap \Gamma_d = Q_{p_{d,f}}.
\end{equation}
\begin{figure}[!ht] 
\begin{center}
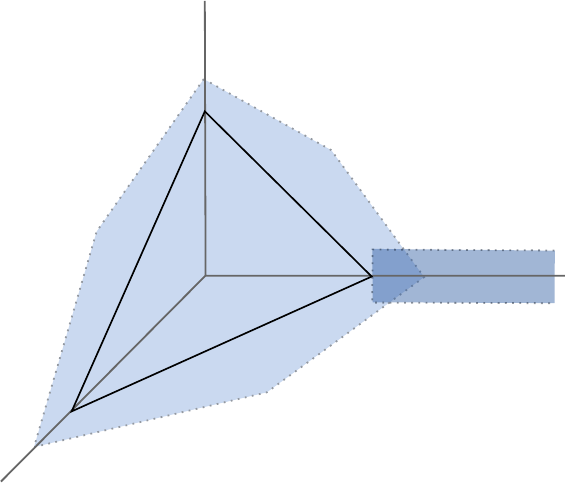
\caption{The neighborhoods $B_{d}$ and $B_{d,f}$. The triangle is the convex hull of the points $p_{d,f}$. } \label{neighborhoods_d}
\end{center}
\end{figure}
\subsection{Fixing the inner polyhedrons} \label{fix_poly}  Consider the pairs $(e,d)$ where $\dim e=3$ and $d \preceq e$ is a two dimensional face. Choose $\epsilon_1 \in (0,1/2)$ so that for all such pairs 
\begin{equation} \label{eps1_incl}
     \tilde W^{e,d}_{\epsilon_1} \subset \tilde U_{e,d}. 
\end{equation}
In the tropical hyperplane $\Gamma_{e}$, for every $f \preceq e$ with $\dim f=1$ consider the points  $r'_{e,f} \in \Gamma_{e,f}$ and for every $d  \preceq e$ with $\dim d =2$ consider the points $r_{e,d} \in \Gamma_{e,d}$. 
We choose the size of the inner polyhedrons so that these collections of points satisfy \eqref{prop_eps} with $r_{J} = r_{e,d}$, $r_{J'} = r'_{e,f}$ and $\epsilon =\epsilon_1$. This can be easily achieved by taking $\rho'_{f}$ sufficiently close to $\check f$ and leaving $\rho_d$ fixed. Notice that conditions (1)-(8) of the previous section still hold, perhaps after taking the segment $B_f$ smaller when $f$ is an edge. 

\subsection{Preparing the local model along edges} \label{resc_edg}
Consider the pairs $(d,f)$ where $\dim d =2$ and $f$ is an edge of $d$. Choose an $\epsilon_3> 0$ such that for all such pairs, inside $\tilde C_d$ we have
\[ \tilde{\mathcal W}^{d,f}_{\epsilon_3} \subset \tilde U_{d,f} \]
(see \S \ref{loc_coo} and \S \ref{tgt_coa_proj} for notation). Given a three dimensional $e \in (P, \nu)$ containing $d$ and viewing $\tilde C_d$ as a face of $\tilde C_e$, we  assume that $\epsilon_3$ is small enough so that the following property holds
\begin{equation} \label{eps3_cond}
\y_{e,d}^{-1}(\tilde{\mathcal W}^{d,f}_{\epsilon_3}) \cap \tilde{\mathcal W}^{e,d}_{\epsilon_1} \subset \tilde{\mathcal W}^{e,f},
\end{equation}
where the latter set corresponds to $\tilde{\mathcal W}_{J_{e,f}}$ as defined in \S \ref{Imh}.

For each edge $f$ of $d$ let $p_{d,f} \in \Gamma_{d,f}$ be the point defined in \S \ref{inner_pol} . If $\bar R_{J_{d,f}}$ is the constant given in Lemma \ref{preimSmall} for $\epsilon = \epsilon_3$, let $\lambda$ be such that for all edges $f$ of $d$,  $p_{d,f} > \bar R_{J_{d,f}} \lambda$. Then we can consider the $\lambda$-rescaled Lagrangian pair of pants $\overline{\Phi}_d: \tilde C_d \rightarrow L^{\perp}_d \times T_d$. More precisely, via local coordinates, we can consider the function $F$ (given in \eqref{Fglob}) as being defined on $\tilde C_d$ and then rescaled by $\lambda$. Then $\overline \Phi_d$ is defined as the graph of $dF$ (in the sense of \eqref{phi:graph}):
\begin{equation} \label{edge_loc_mod_r}
\begin{split}
\overline \Phi_{d}: \tilde C_{d} & \rightarrow L^{\perp}_{d} \times T_{d}   \\ 
                                            q & \mapsto ((dF)_q, q ).
\end{split}
\end{equation}
where $L^{\perp}_{d}$ is identified with the cotangent fibre of $T_{d}$. Let the associated map $\overline{\h}_d$ be given by composition of $\overline{\Phi}_d$ with the projection on $L^{\perp}_d$ and denote its image by $\mathcal H_d$. 

To get the three dimensional model we consider $F$ as a function on $\rho_d \times \tilde{C}_d$, where $\rho_d \subset \check d$ is the inner polyhedron, and define the local model along the edge as:
\begin{equation} \label{edge_loc_mod1}
\begin{split}
\Phi_{d}:  \rho_d \times \tilde C_{d} & \rightarrow (L_{d} \times L^{\perp}_{d} ) \times (V_{\check d} \times T_{d})   \\ 
                                            q & \mapsto ((dF)_q, q ).
\end{split}
\end{equation}
We denote by $\h_d$ the left composition of $\Phi_d$ with projection onto $L^{\perp}_{d} \times V_{\check d}$. Clearly its image is just $\mathcal H_d \times \rho_d$. Here the righthand space is identified with (a covering of) $M_{\R} \times N_{\R}/N$ via \eqref{cotan_map}.   We define
\[ \overline{\h}_{d,f} = \x_{d,f} \circ \overline{\h}_{d}, \]
corresponding to $\h_{J_{d,f}}$ in local coordinates.  Similarly we name by $\overline{\gb}_{d,f}$ the map corresponding to $\gb_{J_{d,f}}$ (see \eqref{mapg}).  

By the above choice of the rescaling factor $\lambda$, we have that Lemma \ref{preimSmall} holds for $\epsilon= \epsilon_3$ and $r_{J_{d,f}} = p_{d,f}$. In particular we can define the subsets $\mathcal H_{p_{d,f}} \subset \mathcal H_d$, which fibre over $Q_{p_{d,f}}$ with fibres the segments $I^{d,f}_q$. Moreover
\begin{equation} \label{edge_ref_pt}
\overline{\h}^{-1}_d(\mathcal H_{p'_{d,f}}) \subset \overline{\h}^{-1}_d(\mathcal H_{p_{d,f}}) \subset \tilde{\mathcal W}^{d,f}_{\epsilon_3} \subset \tilde U_{d,f}.
\end{equation}
Thus also Corollary \ref{fibreBndl1} holds. Given the points $p_{d,f}, p'_{d,f}$ and $p''_{d,f}$ as in \S \ref{inner_pol}, denote the following subsets of $L^{\perp}_d$
\begin{equation} \label{edge_H1}
\begin{split}
        H_d & = \mathcal H_d - \bigcup_{f \preceq d, \dim f=1} \mathcal H_{p_{d,f}}, \\
        H'_d & = \mathcal H_d - \bigcup_{f \preceq d, \dim f=1} \mathcal H_{p'_{d,f}}, \\
        H''_d & = \mathcal H_d - \bigcup_{f \preceq d, \dim f=1} \mathcal H_{p''_{d,f}}.
\end{split}
\end{equation}
Obviously we have 
\[ H_d \subset H''_d \subset H'_d. \]
Recall the neighborhoods $B_d$ and $B_{d,f}$ defined \S \ref{intorni}. After eventually rescaling with a smaller $\lambda$, we can also assume 
\begin{equation} \label{ends_in_nbhd}
     \mathcal H_{d} \cap B_{d,f} = \mathcal H_{p_{d,f}} 
\end{equation}
and therefore by property (5) of \S  \ref{intorni} and the convexity of $B_d$
\begin{equation} \label{body_in_nbhd}
    H'_d \subset B_d.
\end{equation}

Following Remark \ref{level_set} we can choose an $\epsilon'_2$, independent of $(d,f)$, such that 
\begin{equation} \label{e2}
   \tilde{\mathcal W}^{d,f}_{\epsilon'_2} \subset \overline{\h}^{-1}_d(\mathcal H_{p'_{d,f}}).
 \end{equation}
 
We will also need the following definition. 

\begin{figure}[!ht] 
\begin{center}
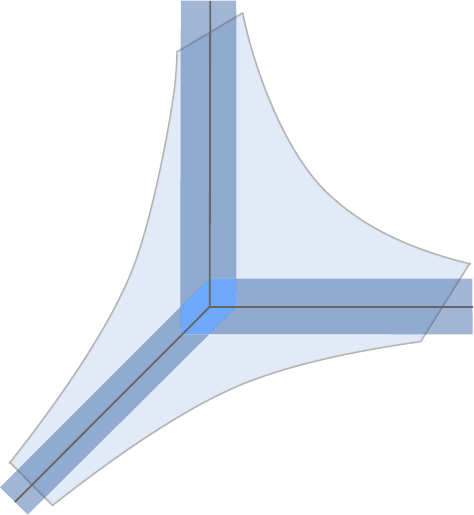
\caption{The sets $H_d$ and $K_d$. The small hexagon in the center is $K'_d$} \label{k_and_h}
\end{center}
\end{figure}

\begin{propdefi} \label{constrain_sets} Given the tangent tropical line $\Gamma_d$, the vectors $\{ u_0, u_1, u_2 \}$ generating its one dimensional cones as in \S \ref{loc_coo} and $t \in \R_{>0}$, define the hexagon
\[  K'_{d}   = \conv \{tu_0, -tu_0, tu_1, -tu_1, tu_2, -tu_2 \}. \]
For every edge $f$ of $d$ let
\[ K_{d,f}  = \Gamma_{d,f} + K'_{d}  \]
and define 
\[  K_d  = \bigcup_{f \preceq d, \dim f=1} K_{d,f}, \]
see Figure \ref{k_and_h}. For sufficiently small $t$, these sets have the following properties
\begin{itemize}
\item [a)] $ K'_d \subset H_d;$
\item [b)] for every edge $f$ of $d$ the boundary points of $I^{d,f}_{p'_{d,f}}$ are outside $K_d$;
\item [c)] for every $p \in K'_d$ and $q \in H_d$ the segment from $p$ to $q$ lies inside $H_d$;
\item [d)] if a point $q$ lies on the segment between points $p_1 \in K_{d,f} \cap H_d$ and $p_2 \in \mathcal H_{p_{d,f}}$ and satisfies $\x_{f} (q) < p_{d,f}$ then $q \in H_d$.  
\end{itemize}
Properties $(c)$ and $(d)$ hold also if we replace $H_d$ with $H'_d$ or $H''_d$ and $p_{d,f}$ with $p'_{d,f}$ or $p''_{d,f}$ respectively. 
\end{propdefi}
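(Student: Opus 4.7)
I will verify (a)--(d) in order, choosing $t$ small enough at each stage. The unifying principle is that, as $t \to 0$, the hexagon $K'_d$ shrinks to the origin of $L^\perp_d$ while all the other data — the amoeba $\mathcal H_d$, the points $p_{d,f}, p'_{d,f}, p''_{d,f}$, and the boundary endpoints of the segments $I^{d,f}_{p'_{d,f}}$ — remain fixed. Conditions which hold ``sufficiently close to the origin'' therefore hold automatically for small $t$, and since there are only finitely many edges $f$ of $d$ and only three inner polyhedrons involved, a single small $t$ can be chosen at the end.

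For (a), the origin is the vertex of $\Gamma_d$ and lies in the interior of $\mathcal H_d$, strictly away from every trimmed leg $\mathcal H_{p_{d,f}}$ (each of which lives over $Q_{p_{d,f}}$ with $p_{d,f} \in \inter \Gamma_{d,f}$); hence $H_d$ contains an open neighborhood of $0$ in $L^\perp_d$, and (a) follows for small $t$. For (b) I would apply Lemma \ref{interSdim2} in the two-dimensional fibre $L^\perp_d$ to control the endpoints $x^\pm$ of $I^{d,f}_{p'_{d,f}}$: their distance to $\Gamma_{d,f}$ is $O(\lambda^2/p'_{d,f})$ and their $\x_{d,f}$-coordinate is at least $p'_{d,f} - O(\lambda^2/p'_{d,f})$, both independent of $t$. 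The thickening $K_{d,g} = \Gamma_{d,g} + K'_d$ has transversal width of order $t$, so for $g=f$ taking $t$ small forces $x^\pm \notin K_{d,f}$; for $g \ne f$ the large component of $x^\pm$ along $\Gamma_{d,f}$ keeps it at bounded distance from the transverse cone $\Gamma_{d,g}$, hence outside $K_{d,g}$.

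For (c) and (d) the key input is that each $\mathcal H_k$ is star-shaped with respect to the origin: the defining constraint $\prod_l t_l \le \lambda^{n+1}/(n+1)^{n+1}$ is preserved when all the $t_l$'s are scaled by a common factor $s \in [0,1]$. I would use this, together with the observation that every vertex of $K'_d$ lies in some $\mathcal H_k$, to show that for small $t$ the segment from $p \in K'_d$ to $q \in H_d$ stays in $\mathcal H = \mathcal H_0 \cup \mathcal H_1 \cup \mathcal H_2$: tracking the coordinates in each basis $\{u_l\}_{l \ne k}$ along $[p,q]$, one sees that whenever the segment crosses from $\mathcal H_k$ into a neighbouring $\mathcal H_{k'}$ the product constraint is still satisfied, because $p$ contributes only $O(t)$ to each coordinate while $q$ is fixed. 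To rule out reentering a trimmed leg, I would use linearity of the projections $\x_{d,g}$: since $\x_{d,g}(p) = O(t)$ and $\x_{d,g}(q) < p_{d,g}$ for $q \in H_d$, the same inequality holds all along the segment, so no point lies in $\mathcal H_{p_{d,g}}$. For (d) the same mechanism applies on the narrower strip $K_{d,f} \cup \mathcal H_{p_{d,f}}$ around $\Gamma_{d,f}$: the endpoints $p_1, p_2$ already lie in $\mathcal H_d$, $q \notin \mathcal H_{p_{d,f}}$ is the hypothesis, and the other trimmed legs $\mathcal H_{p_{d,g}}$ ($g \ne f$) are avoided because the whole segment stays within transversal distance $O(t)$ of $\Gamma_{d,f}$.

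The main obstacle I anticipate is the case analysis in (c): because each individual arm $\mathcal H_k$ is \emph{not} convex, a segment from $K'_d$ to $H_d$ can in general cross between two or three arms, and one has to verify that the product-of-coordinates constraint is preserved at every crossing. Making this estimate uniform in $q$ relies on the boundedness of $H_d$ in the directions transverse to the cones $\Gamma_{d,f}$ (ensured by the trimming), which is what lets us choose a single $t$ that works simultaneously for all $q \in H_d$. The analogues of (c) and (d) with $H'_d$ or $H''_d$ in place of $H_d$ follow from exactly the same argument, the only change being that the constants depend on $p'_{d,f}$ or $p''_{d,f}$ instead of $p_{d,f}$, which amounts to strengthening the upper bound on $t$ by a bounded factor.
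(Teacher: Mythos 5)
The paper itself gives no argument for this statement (its proof reads ``these are easy geometric consequences of the definitions''), so the only question is whether your sketch is correct. Your overall strategy is the right one and matches what the paper implicitly has in mind: the hexagon $K'_d$ shrinks while $\mathcal H_d$, the points $p_{d,f},p'_{d,f},p''_{d,f}$ and the endpoints of $I^{d,f}_{p'_{d,f}}$ stay fixed; each arm $\mathcal H_k$ is preserved by scaling towards the origin; and the boundedness of $H_d$ (a consequence of the trimming, since every point of $\mathcal H_d$ far from the origin lies in one of the legs $\mathcal H_{p_{d,f}}$) is what makes the choice of $t$ uniform in $q$ — this is exactly the estimate $t\cdot\sup_{q\in H_d}\lVert q\rVert\lesssim\lambda^2$ that your product-tracking in (c) and (d) needs. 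Parts (a), (b) and (d) are fine; in (b) note that the operative fact is a positive \emph{lower} bound on the distance of the fixed endpoints of $I^{d,f}_{p'_{d,f}}$ to $\Gamma_d$ (they lie on the boundary curves $\mathcal S$, which are disjoint from $\Gamma_d$), not the upper bound $O(\lambda^2/p'_{d,f})$ you quote, but your conclusion is correct.

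The one genuine gap is in (c), in the step ruling out that the segment re-enters a trimmed leg: you assert that $\x_{d,g}(q)<p_{d,g}$ for \emph{every} $q\in H_d$ and every edge $g$. This does not follow from the definition of $H_d$: the sets $\mathcal H_{p_{d,g}}$ are unions of \emph{connected components} of the fibres $\x_{d,g}^{-1}(x)\cap\mathcal H_d$, not of half-planes, so removing them does not bound the $\x_{d,g}$-coordinate of what remains. Since the compatible system of projections is arbitrary, the kernel $L^{\perp}_g$ need not separate the other two cone directions, so $\x_{d,g}(u_{f'})$ can be a positive multiple of $u_g$ for $f'\neq g$; if moreover $p_{d,f'}\gg p_{d,g}$ (which happens when the two-dimensional faces of $\Xi$ adjacent to $\check d$ have very different sizes), points at the far end of the $f'$-leg of $H_d$ have $\x_{d,g}$-value exceeding $p_{d,g}$, and your premise fails. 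The conclusion of (c) is still true, but by the transversal-distance mechanism you yourself invoke in (d): a point $q\in H_d$ far from the origin lies within transversal distance $O(\lambda^2/p_{d,f'})$ of a single cone $\Gamma_{d,f'}$, hence the whole segment $[p,q]$ stays in an $O(t+\lambda)$-neighbourhood of the ray $\R_{\geq0}u_{f'}$, whose distance to the removed leg $\mathcal H_{p_{d,g}}$, $g\neq f'$, is of order $p_{d,g}\gg\lambda$; for $g=f'$ your monotonicity argument does apply, because points of $H_d$ close to $\Gamma_{d,f'}$ genuinely satisfy $\x_{d,f'}(q)<p_{d,f'}$. Replacing the blanket inequality by this two-case argument closes the gap; the rest of the proposal stands.
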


\begin{proof} These are easy geometric consequences of the definitions. 
\end{proof}

\subsection{Preparing the local models over vertices.} \label{resc_trim_vert} Let $e \in (P, \nu)$ be three dimensional. Given $\epsilon'_2$ as in \S \ref{resc_edg}, satisfying \eqref{e2}, choose an $\epsilon_{2} < \epsilon'_2$ such that for all edges $f$ of $e$ we have 
\begin{equation} \label{eps2_incl}
    \tilde{\mathcal W}^{e,f}_{\epsilon_{2}} \subset \tilde U_{e,f}
\end{equation}
and for all two dimensional faces $d$ of $e$ the following holds
\begin{equation} \label{trim_neigh}
          \forall \ \text{edges} \ f \preceq d, \quad \y_{e,d}(\tilde{\mathcal{W}}^{e,f}_{\epsilon_2} \cap \tilde{\mathcal W}^{e,d}_{\epsilon_1}) \subset \tilde {\mathcal W}^{d,f}_{\epsilon'_2}.
\end{equation}
where $\epsilon_1$ was chosen in \S \ref{fix_poly}. 

We have that by \eqref{eps1_incl}, \eqref{eps2_incl} and the criterion in \S \ref{fix_poly}, the numbers $\epsilon_1$ and $\epsilon_2$ and the collection of points  $\{ r_{e,f} \in \Gamma_{e,f} \}_{f \preceq e}$ satisfy the hypothesis of Lemma \ref{resc_lem}. Therefore there exists a $\lambda$ such that the collection $\{ r_{e,f} \in \Gamma_{e,f} \}_{f \preceq e}$ is a good set of trimming parameters for a $\lambda$-rescaled Lagrangian pair of pants. More precisely, via the coordinates identifying $C_e$ with $C$ fixed in \S \ref{loc_coo}, we can consider the function $F$ of \eqref{Fglob}, rescaled by $\lambda$, as a function on $\check e \times \tilde C_e$ and . The local model at the vertex $\check e$ is given by the graph of $dF$ (in the sense of \eqref{phi:graph}):
\begin{equation} \label{local_vert}
   \begin{split} 
     \Phi_e: \check e \times \tilde C_e & \rightarrow M_{\R} \times N_{\R} / N  \\
                                           q              & \mapsto (\check e + (dF)_q, q). 
   \end{split}    
\end{equation}
We will also denote by $\h_e:  \check e \times \tilde C_e \rightarrow M_{\R} $ the left composition of $\Phi_e$ with the projection onto $M_{\R}$ and by $\mathcal H_{e}$ the image of $\h_e$. Of course, in the local coordinates of \S \ref{loc_coo}, $\Phi_e$, $\h_e$ and $\mathcal H_e$ coincide with $\Phi$, $\h$ and $\mathcal H$ (rescaled by $\lambda$). For every face $f$ of $e$ we also denote
\[ \h_{e,f} = \x_{e,f} \circ \h_{e}. \]
In local coordinates $\h_{e,f}$ coincides with $\h_{J_{e,f}}$. Similarly we name by $\gb_{e,f}$ the map corresponding to $\gb_{J_{e,f}}$ (see \eqref{mapg}).

Notice that, by the criterion in   \S \ref{fix_poly}, also the collection
 \begin{equation} \label{trimming_coll}
       \{ r'_{e,f} \in \Gamma_{e,f}, \dim f =1 \} \cup \{ r_{e,d} \in \inter \Gamma_{e,d}, \dim d =2 \}
\end{equation}
forms a good set of trimming parameters. 

For every edge $f$ of $e$ the points $r_{e,f} \in \Gamma_{e,f}$ satisfy Corollary \ref{intersectS2} and for every $x \in Q_{r_{e,f}}$ we can define the segments $I^{e,f}_{x}$ (see \S \ref{Trimming1}) and the subsets $\mathcal H_{r_{e,f}} \subset \mathcal H_e$ as in \eqref{fibrato}. Moreover, by construction, we have
\begin{equation} \label{vert_resc_concl}
   \h^{-1}_e(\mathcal H_{r'_{e,f}}) \subset \h^{-1}_e(\mathcal H_{r_{e,f}}) \subset \tilde{\mathcal W}^{e,f}_{\epsilon_{2}} \subset \tilde U_{e,f},
\end{equation}
so that Corollary \ref{fibreBndl1} also holds for the points $r_{e,f}$ and $r'_{e,f}$.  By eventually rescaling by a smaller $\lambda$ we can also assume that, for any edge $f$ and two dimensional face $d$ of $e$, the following conditions are met 
\begin{enumerate}
\item if  $B_f \subset L^{\perp}_f$ is the set defined in \S \ref{intorni}, then
\begin{equation} \label{schiacciato}
     (B_f \times Q_{r_{e,f}}) \cap \mathcal H_e = \mathcal H_{r_{e,f}} ,
\end{equation} 
\item given the set $K'_d$ as in Proposition-Definition \ref{constrain_sets}, the statement of Lemma \ref{size_fibr} holds for $\epsilon_1$ chosen as in \S \ref{fix_poly}, $\epsilon=\epsilon'_2$, $K = K'_d$ and $r_J = r_{e,d}$. In particular we have that 
\begin{equation} \label{AinW} 
                    \tilde{\mathcal A}^{e,d}_{\epsilon'_2} \times Q_{r_{e,d}} \subseteq \gb_{e,d}(\tilde W^{e,d}_{\epsilon_1}).
\end{equation}
Moreover, for any $r \in Q_{r_{e,d}}$
\begin{equation} \label{hA_small}
 \h_e(\gb_{e,d}^{-1}(\tilde{\mathcal A}^{e,d}_{\epsilon'_2} \times \{r \}) ) \subseteq \{ r \} \times K'_d. 
\end{equation}

\end{enumerate}

We can define the first trimming of $\mathcal H_e$ by 
\begin{equation} \label{vert_first_trim}
     \mathcal H^{[1]}_{e} = \mathcal H_e - \bigcup_{f \preceq e, \, \dim f=1} \mathcal H_{r'_{e,f}}.
\end{equation}
Moreover, for all two dimensional faces $d$ of $e$, we have that $r_{e,d}$ satisfies Corollary \ref{fibreBndl2}. In particular for all $x \in Q_{r_{e,d}}$ we have the fibres $I^{e,d}_x \subset \mathcal H_e^{[1]}$, whose preimages under $\h_e$ are two dimensional pairs of pants. We also have the subsets $\mathcal H_{r_{e,d}}$ which satisfy
\begin{equation} \label{trim_condition}
         \h_e^{-1}( \mathcal H_{r'_{e,d}})  \subset \h_e^{-1}( \mathcal H_{r_{e,d}}) \subset \tilde{\mathcal W}^{e,d}_{\epsilon_1} \subset \tilde U_{e,d}.
\end{equation}
We then define the second trimming 
\begin{equation} \label{vert_first_trim_2}
     \mathcal H^{[2]}_{e} = \mathcal H^{[1]}_e - \bigcup_{d \preceq e, \, \dim d=2} \mathcal H_{r'_{e,d}}.
\end{equation}
By eventually rescaling with a smaller $\lambda$, we can also assume that:
\begin{enumerate}  \setcounter{enumi}{2}
\item for every three dimensional $e$
\begin{equation} \label{h2_in_nbhd}
       \mathcal H^{[2]}_e \subset B_{e}. 
\end{equation} 
\item for every two dimensional face $d$ of $e$
\begin{equation} \label{Bd_inters_H}
(B_{d} \times Q_{r_{e,d}}) \cap \mathcal H_e^{[1]} = \mathcal H_{r_{e,d}} 
\end{equation}
\item if $K_d \subset L^{\perp}_{d}$ is as in Proposition-Definition \ref{constrain_sets} and $H'_d$ as in \eqref{edge_H1}, then for every $x \in Q_{r_{e,d}}$
\begin{equation} \label{ieder_inH1}
  I^{e,d}_x \subset  (K_{d} \cap H'_d) \times \{ x \}.
\end{equation}
\end{enumerate}
Indeed we have that $\mathcal H^{[1]}_e \cap \Gamma_e = \Gamma^{[1]}_e$ and $\mathcal H^{[2]}_e \cap \Gamma_e = Y'_{e}$ and $\mathcal H_{r_{e,d}} \cap \Gamma^{[1]}_e = \Gamma^{[1]}_d \times Q_{r_{e,d}}$ thus by a sufficiently small $\lambda$ we can assume $\mathcal H^{[1]}_e$, $\mathcal H^{[2]}_e$ and $\mathcal H_{r_{e,d}}$ to be arbitrarily close to $\Gamma^{[1]}_e$, $Y'_{e}$ and $\Gamma^{[1]}_d \times Q_{r_{e,d}}$ respectively. Thus \eqref{h2_in_nbhd} follows from the fact that $B_e$ is a neighborhood of $Y'_e$. Equality \eqref{Bd_inters_H} follows from \eqref{kd_gamma}, while \eqref{ieder_inH1} follows form the fact that $K_d \cap H'_d  \subset B_d$ and that $((K_d \cap H'_d) \times Q_{r_{e,d}}) \cap \Gamma^{[1]}_e = \Gamma^{[1]}_d \times Q_{r_{e,d}}$.

Notice that for all $r \in Q_{r_{e,d}}$ we have 
\begin{equation} \label{hAepsilon}
          \h_e(\gb_{e,d}^{-1}(\tilde{\mathcal A}^{e,d}_{\epsilon'_2} \times \{r \}) ) \subset I^{e,d}_{r}.
\end{equation}
Indeed \eqref{AinW}, \eqref{trim_neigh}, \eqref{vert_resc_concl} ensure if $y \in \gb_{e,d}^{-1}(\tilde{\mathcal A}^{e,d}_{\epsilon'_2} \times \{r \})$ then $\h_e(y) \in \mathcal H^{[1]}_e$. Therefore the inclusion follows from \eqref{hA_small}, \eqref{body_in_nbhd}, part (a) of Proposition-Definition \ref{constrain_sets} and \eqref{Bd_inters_H}.

\begin{lem} \label{hdomains}
Let $\overline{\h}_d: \tilde C_d \rightarrow L^{\perp}_d$ be the map from \S \ref{resc_edg} and let $H'_d$ be as in \eqref{edge_H1}. Then, for all $r \in Q_{r_{e,d}}$, we have 
\[ \overline{\h}^{-1}_d(H'_d) \subset  \tilde{\mathcal A}^{e,d}_{\epsilon'_2} \subset \y_{e,d}(\h^{-1}_{e}(I^{e,d}_{r})). \]
\end{lem}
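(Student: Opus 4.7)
The plan is to treat the two inclusions separately; each one reduces almost immediately to an inclusion already established in \S\ref{resc_edg} or \S\ref{resc_trim_vert}, so the proof is essentially a matter of chasing definitions through the identification of $\tilde C_{e,d}$ with $\tilde C_d$.

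For the first inclusion $\overline{\h}^{-1}_d(H'_d) \subset \tilde{\mathcal A}^{e,d}_{\epsilon'_2}$ I would argue by contrapositive. Viewing $\tilde C_{e,d}$ as $\tilde C_d$, a point $y \in \tilde C_d$ fails to lie in $\tilde{\mathcal A}^{e,d}_{\epsilon'_2}$ precisely when $y \in \inter(\tilde{\mathcal W}^{d,f}_{\epsilon'_2})$ for some edge $f$ of $d$, since these are exactly the neighborhoods removed in the construction of $\tilde{\mathcal A}^{e,d}_{\epsilon'_2}$. But \eqref{e2} gives $\tilde{\mathcal W}^{d,f}_{\epsilon'_2} \subset \overline{\h}^{-1}_d(\mathcal H_{p'_{d,f}})$, so for such $y$ we have $\overline{\h}_d(y) \in \mathcal H_{p'_{d,f}}$, and hence $\overline{\h}_d(y) \notin H'_d$ by the very definition of $H'_d$ in \eqref{edge_H1}.

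For the second inclusion, fix $y' \in \tilde{\mathcal A}^{e,d}_{\epsilon'_2}$ and $r \in Q_{r_{e,d}}$. By \eqref{AinW} the pair $(y', r)$ lies in $\gb_{e,d}(\tilde{\mathcal W}^{e,d}_{\epsilon_1})$, so there exists $y \in \tilde{\mathcal W}^{e,d}_{\epsilon_1} \subset \tilde U_{e,d}$ with $\gb_{e,d}(y) = (y', r)$; in particular $\y_{e,d}(y) = y'$. Since $y \in \gb_{e,d}^{-1}(\tilde{\mathcal A}^{e,d}_{\epsilon'_2} \times \{r\})$, the inclusion \eqref{hAepsilon} yields $\h_e(y) \in I^{e,d}_r$, i.e.\ $y \in \h_e^{-1}(I^{e,d}_r)$. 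Hence $y' = \y_{e,d}(y) \in \y_{e,d}(\h_e^{-1}(I^{e,d}_r))$, as required.

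I do not expect a genuine obstacle here: the real work was done in arranging the hierarchy $\epsilon_1, \epsilon'_2, \epsilon_2, \epsilon_3$ and the rescaling factor $\lambda$ so that \eqref{e2}, \eqref{AinW}, and \eqref{hAepsilon} all hold simultaneously. The lemma just harvests these two inclusions; the only subtlety is the notational bookkeeping required to identify $\tilde C_{e,d}$ (an edge of $\tilde C_e$, on which $\gb_{e,d}$ acts) with $\tilde C_d$ (a $2$-dimensional coamoeba in its own right, on which $\overline{\h}_d$ acts), an identification that is built directly into the definitions of $\tilde{\mathcal W}^{d,f}_{\epsilon'_2}$ and $\tilde{\mathcal A}^{e,d}_{\epsilon'_2}$.
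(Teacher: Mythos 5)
Your proof is correct and is essentially the paper's own argument: the paper simply states that both inclusions follow from \eqref{e2} and \eqref{hAepsilon}, and you have filled in exactly those deductions (the contrapositive via \eqref{e2} for the first inclusion, and \eqref{AinW} together with \eqref{hAepsilon} for the second, \eqref{AinW} being the surjectivity statement implicitly needed there as well).
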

\begin{proof}
These inclusions follow from \eqref{e2} and \eqref{hAepsilon}.
\end{proof}

\medskip

We can now give a provisional definition of the trimmed local model. 
\begin{defi}[Provisional] \label{defi_loc_mod_vert}
The local model at a vertex $\check e$ is given by \eqref{local_vert}, but now $\Phi_e$ is rescaled as explained in this subsection and its domain is restricted to the subset 
\[ \mathcal Z_e = \h^{-1}_e(\mathcal H^{[2]}_e) \subset \check{e}  \times \tilde C_{e}. \]
 We denote this local model by $(\Phi_e, \mathcal Z_e)$.  
\end{defi} 

\subsection{Gluing the local models} \label{gluing_mdls} We are ready to do the first gluing: given the local model over the vertex $\check e$ we glue its ends over one dimensional cones to the local model over the edge corresponding to that cone. So let $d$ be a two dimensional face of $e$. From the local model over $\check e$, we have that the end of $\mathcal H^{[2]}_{e}$ over the cone $\Gamma_{e,d}$ is given by the subset $\mathcal H_{r_{e,d}} - \mathcal H_{r'_{e,d}}$. For simplicity of notation, let us denote 
\[ \mathcal H_{[r_{e,d}, r'_{e,d})} :=  \mathcal H_{r_{e,d}} - \mathcal H_{r'_{e,d}}. \]
As we already recalled,  Corollaries \ref{fibreBndl2}, \ref{fibreBndl3}, \ref{compbndl2} hold for the constant $r_{e,d}$. Hence we have a fibre bundle
\[ \h_{e,d}: \h^{-1}_e(\mathcal H_{[r_{e,d}, r'_{e,d})}) \rightarrow  [r_{e,d}, r'_{e,d})\]
with fibre homeomorphic to a two dimensional pair of pants.  Then we have  
\[ \gb_{e,d}: \h^{-1}_e(\mathcal H_{[r_{e,d}, r'_{e,d})}) \rightarrow  [r_{e,d}, r'_{e,d}) \times \tilde C_{d} \]
which is a diffeomorphism onto its image. Let us denote this image by
\[ \mathcal Z^0_{e,d} := \gb_{e,d}( \h^{-1}_e(\mathcal H_{[r_{e,d}, r'_{e,d})})) \subset \rho_d \times \tilde C_d.\]
Then $\Phi_e(\h^{-1}_e(\mathcal H_{[r_{e,d}, r'_{e,d})}))$ is the graph of $dG_{e,d}$ for some function $G_{e,d}$ over $\mathcal Z^0_{e,d}$. 

Let us now look at the local model over the edge from \S \ref{resc_edg}. Recall that $\Phi_d$ is defined in \eqref{edge_loc_mod1} as the graph of $dF$, for suitable $F$. The idea is to interpolate the two maps via a partition of unity. 

Recall also that we defined a third inner polyhedron $\rho''_d$ which is nested between $\rho_d$ and $\rho'_d$, and defines a point $r''_{e,d} \in [r_{e,d}, r'_{e,d}]$. Choose some $\bar r_{e,d} \in [ r_{e,d}, r'_{e,d}]$ so that  
\[ r_{e,d} < r''_{e,d} < \bar r _{e,d}< r'_{e,d}. \]
Define 
\[ \mathcal Z^{\infty}_{e,d} := (\bar r_{e,d}, r'_{e,d}) \times \tilde C_{d} \]
and consider the following open subset of $[ r_{e,d}, r'_{e,d}) \times \tilde C_{d}$ 
\[ \mathcal Z_{e,d} = \mathcal Z^{0}_{e,d} \cup \mathcal Z^{\infty}_{e,d}. \]
Let $\eta: [r_{e,d}, r'_{e,d}) \rightarrow \R$ be some smooth, non-increasing function such that
\[ \eta(t) = \begin{cases}
                         1 \quad t \in [r_{e,d}, r''_{e,d}],          \\
                         0 \quad t \in [\bar r_{e,d}, r'_{e,d}). 
                 \end{cases}
 \]
On the open subset $\mathcal Z_{e,d}$ of $\rho_d \times \tilde C_{d}$ define the following function 
 \[ F_{e,d}(t, y)= \begin{cases}
                 \eta(t) G_{e,d}(t,y) + (1-\eta(t)) F(y) \quad \text{on} \ \mathcal Z^{0}_{e,d}, \\
                 F(y) \quad \text{on} \ \mathcal Z^{\infty}_{e,d},
                 \end{cases} \]
  where $G_{e,d}$ is the function coming from the local model over $\check e$ and $F$ is the function from the local model over $\check d$. Clearly $F_{e,d}$ is well defined and smooth.

 \begin{defi} \label{gluing_models} Let
\[ \mathcal Z_{d} = (\rho'_{d} \times \tilde C_{d}) \cup \left( \bigcup_{d \preceq e} \mathcal Z_{e,d} \right) \]
and let $F_d: \mathcal Z_{d} \rightarrow \R$ be the function which coincides with $F$ on $\rho'_{d} \times \tilde C_{d}$ and with $F_{e,d}$ on $\mathcal Z_{e,d}$. Clearly $F_d$ is smooth. Given the identification of the cotangent bundle of $\rho_d \times C_d$ with $(L_d \times L_d^{\perp}) \times (\rho_d \times C_d)$ of Lemma \ref{cotangent}, we redefine the new local model along the edge as 
  \[ \begin{split}
              \Phi_d: \mathcal Z_d &\rightarrow (L_d \times L_d^{\perp}) \times \mathcal Z_d \\
                                q    & \mapsto ( (dF_d)_q, q).
     \end{split}
               \]
\end{defi}
The point of this definition is that we have the equality
\begin{equation} \label{overlap} 
 \Phi_e(\h_{e}^{-1}(\mathcal H_{[r_{e,d}, r''_{e,d}]})) = \Phi_d(\mathcal Z_{d} \cap ([r_{e,d}, r''_{e,d}] \times \tilde C_d) )
\end{equation}
and thus the local models over $\check e$ and over $\check d$ may be glued along this set. 

\subsection{Trimming the new local models over the edges}
The image of the new local model over the edge defined in Definition \ref{gluing_models} is still too big, since its image goes off to infinity. Before deciding where to trim, we have to prove that the new local model continues to have all the nice properties with respect to projections onto faces. 

Let us define by $\h_d$ the following composition
\[ \h_d: \quad \mathcal Z_d \stackrel{\Phi_d}{\longrightarrow} (L_d \times L_d^{\perp}) \times \mathcal Z_d \longrightarrow L_d^{\perp} \times \rho_d \]
where the rightmost map is just the standard projection. If $f$ is an edge of $d$, define
\[ \h_{d,f}= \x_f \circ \h_{d}. \]
Given the projection $\y_{d,f}$ onto the edge $\tilde C_{f}$ of $\tilde C_{d}$, well defined on $\tilde U_{d,f}$, we have:

\begin{lem} \label{glued_proj}
The following map is a diffeomorphism onto its image
\[ \begin{split}
  	\gb_{d,f}: \quad \quad \mathcal Z_d \cap (\rho_d  \times \tilde U_{d,f}) & \rightarrow V_{\check f} \times \tilde C_f \\
	                  (t, y) & \mapsto (\h_{d,f}(t,y), \y_{d,f}(y)).
	\end{split} \]
This implies that $\Phi_d(\mathcal Z_d \cap (\rho_d \times \tilde U_{d,f})$ is the graph of the differential of a Legendre transform of $F_d$. 
\end{lem}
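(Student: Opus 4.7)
The plan is to establish that $\gb_{d,f}$ is a smooth local diffeomorphism that is globally injective; combining these facts gives a diffeomorphism onto its image, and the statement about the Legendre transform then follows from Lemma~\ref{cotangent} exactly as in Proposition~\ref{faceproj}. The natural organization is to decompose $\mathcal Z_d \cap (\rho_d\times\tilde U_{d,f})$ into three regions according to the value of $\eta$: Region~A, where $\eta(t)=0$ and $F_d=F(y)$; Region~B, where $\eta(t)=1$ and $F_d=G_{e,d}(t,y)$; and Region~C, the interpolation slab where $0<\eta(t)<1$.

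On Region~A the map factors as $(t,y)\mapsto (t,\overline{\gb}_{d,f}(y))$, where $\overline{\gb}_{d,f}$ is the projection for the two-dimensional rescaled pair of pants $\overline{\Phi}_d$ of \S\ref{resc_edg}; the two-dimensional case of Proposition~\ref{faceproj} makes $\overline{\gb}_{d,f}$ a diffeomorphism on $\tilde U_{d,f}$. On Region~B, the defining property of $G_{e,d}$ gives $\Phi_d=\Phi_e\circ\gb_{e,d}^{-1}$ as Lagrangian embeddings; together with the compatibility $\y_{d,f}\circ\y_{e,d}=\y_{e,f}$ and the dual compatibility for $\x$ this yields the factorization $\gb_{d,f}=\gb_{e,f}\circ\gb_{e,d}^{-1}$. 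Conditions~\eqref{eps3_cond}, \eqref{vert_resc_concl} and~\eqref{trim_condition} place the image of $\gb_{e,d}^{-1}$ inside $\tilde U_{e,f}$, where the three-dimensional Proposition~\ref{faceproj} applied to $\Phi_e$ makes $\gb_{e,f}$ a diffeomorphism; so is $\gb_{d,f}$ on Region~B.

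Region~C is the main obstacle. Using the adapted coordinates of \S\ref{loc_coo} with $J_{d,f}=\{1\}$, so that $y_1$ is along the kernel $L_{d,f}$ of $\y_{d,f}$ and $y_2$ along $T_f$, a direct computation gives $\gb_{d,f}(t,y_1,y_2)=(t,\pi\,\partial_{y_1} F_d,y_2)$; its Jacobian is upper-triangular and has determinant $\pi\,\partial^2_{y_1 y_1} F_d$ (up to sign). Writing $F_d=\eta G_{e,d}+(1-\eta)F$, this is a convex combination of the corresponding second derivatives. By Corollary~\ref{HessFneg} the term $\partial^2_{y_1 y_1} F$ is negative on $\inter C_d^+$ and positive on $\inter C_d^-$. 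For the other term, expressing $G_{e,d}$ as the partial Legendre transform of the three-dimensional $F_e$ in the direction $y_3$ of $\tilde C_e$ transverse to $\tilde C_d$ yields
\[
\partial^2_{y_1 y_1} G_{e,d} \;=\; \partial^2_{y_1 y_1}F_e \;-\; \frac{\bigl(\partial^2_{y_1 y_3}F_e\bigr)^2}{\partial^2_{y_3 y_3}F_e},
\]
evaluated at $(y_1,y_2,Y_3(t,y_1,y_2))$. This is the $(1,1)$-entry of the Schur complement of $\partial^2_{y_3 y_3} F_e$ in $\hes F_e$; because $\y_{e,d}$ carries $\inter C_e^\pm$ onto $\inter C_d^\pm$, the three-dimensional Corollary~\ref{HessFneg} forces this Schur complement to be negative-definite on the $C_d^+$-side and positive-definite on the $C_d^-$-side. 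Hence the two summands share their sign on each of $\inter C_d^{\pm}$, their convex combination never vanishes, and the Jacobian is non-zero throughout Region~C (smooth extension to the blowups of the vertices being handled as in Proposition~\ref{faceproj}).

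Global injectivity is then a continuity argument: $\gb_{d,f}$ preserves the $t$-coordinate, so it suffices to inject on each slice $\{t=\text{const}\}$; along a fibre $\{y_2=\text{const}\}$ the non-vanishing of $\partial^2_{y_1 y_1} F_d$ makes $\partial_{y_1}F_d$ strictly monotone on each connected component of the fibre (one in $C_d^+$, one in $C_d^-$), giving injectivity within each component. The images of the two components are disjoint at $t=r''_{e,d}$ (Region~B) and at $t=\bar r_{e,d}$ (Region~A) by the diffeomorphism property established there, and Lemma~\ref{hnear_faces} forces $\pi\,\partial_{y_1} F_d$ to escape to $\pm\infty$ in opposite directions at the faces of $\tilde U_{d,f}$ that one would have to cross, preventing any collision as $t$ sweeps through Region~C. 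Smoothness of $F_d$ in $t$ glues the three regions into a single diffeomorphism onto its image, and the graph-of-Legendre-transform statement is then immediate from Lemma~\ref{cotangent}.
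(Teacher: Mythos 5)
Your proposal is correct and follows essentially the same route as the paper: slice by $t$, dispose of the $\eta\in\{0,1\}$ regions via Proposition \ref{faceproj}, and on the interpolation region observe that the fibre-direction second derivative of the convex combination $\eta G_{e,d}+(1-\eta)F$ cannot vanish because both summands have Hessians of the same definite sign on each slice, with the Legendre-transform statement reduced to Lemma \ref{cotangent} and Proposition \ref{faceproj}. Your Schur-complement computation simply makes explicit what the paper asserts ``by construction'' about the slice Hessian of $G_{e,d}$, and your injectivity sweep plays the role of the paper's appeal to the argument of Proposition \ref{faceproj}.
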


\begin{proof} This is analogous to Proposition \ref{faceproj}. Clearly the Lemma holds when $\gb_{d,f}$ is restricted to $\rho'_d \times \tilde C_d$, where the local model coincides with the one in \S \ref{resc_edg}. So we restrict to $\mathcal Z_{e,d}$. Let us describe $\h_{d,f}$ in more detail. For every $t \in \rho_d$, define the slice
\begin{equation} \label{zt0}
 Z_t = \mathcal Z_d \cap (\{ t \} \times \tilde C_{d}).
\end{equation} 
and let $F_{d,t}: Z_t \rightarrow \R$ be the restriction of $F_d$ to the slice. Now let
\[ \h_{d,t}: Z_t \rightarrow L^{\perp}_{d} \]
be the differential of $F_{d,t}$ (recall that $L^{\perp}_{d}$ is the cotangent fibre of $T_d$) and let 
\[ \h_{d,f,t} = \x_{d,f} \circ \h_{d,t}. \]
It is easy to show that 
\begin{equation} \label{acca_d}
   \begin{split}
                    \h_d(y,t) &= (\h_{d,t}(y), t) \\
                     \h_{d,f}(t, y) &= (\h_{d,f,t}(y), t)
   \end{split} 
\end{equation}
and therefore that
\begin{equation} \label{g_edge_model}
      \gb_{d,f}(t, y) = (\h_{d,f,t}(y), t, \y_{d,f}(y)).
\end{equation}
Define 
\begin{equation} \label{g_edge_slice}
   \begin{split}
  	\gb_{d,f,t}: \quad  Z_t \cap (\{ t \} \times \tilde U_{d,f} )& \rightarrow \Gamma_{d,f} \times \tilde C_f \\
	                  y \ \ \ \ \  \ \ \ \ & \mapsto (\h_{d,f,t}(y), \y_{d,f}(y)).
	\end{split}
\end{equation}
In particular $\gb'_{d,f}$ is a diffeomorphism if and only if $\gb_{d,f,t}$ is a diffeomorphism for all $t$. Clearly there is nothing to prove when $\eta(t) = 0$ or $1$, since in this case $F_d$ coincides with $G_{e,d}$ or $F$ and the result follows from Proposition \ref{faceproj}. For other values of $\eta$, $F_{d,t}$ is an interpolation between $G_{e,d}$ and $F$.  Given local coordinates $y=(y_1, y_2)$ on $Z_t$, we know that the hessian (in the $y$ coordinates) of both functions is negative definite by construction, therefore also the hessian of $F_{d,t}$ must be negative definite. In particular also the hessian of $F_{d,t}$ restricted to a fibre of $\y_{d,f}$ is negative definite. It follows that $\gb_{d,f,t}$ is a local diffeomorphism and that $\h_{d,f,t}$ restricted to a fibre of $\y_{d,f}$ is injective (compare also with Proposition \ref{faceproj}). Hence $\gb_{d,f}$ is a diffeomorphism. The proof of the last statement follows as in Proposition \ref{faceproj}.
\end{proof}


Recall the definition of the reference points $p_{d,f}$ and $p'_{d,f}$ in $\Gamma_{d,f}$ given in \S \ref{inner_pol}. We have the following 

\begin{lem} \label{trim_new_const} The set $([p_{d,f}, p'_{d,f}) \times \rho_d) \times \tilde C_f$ is in the image of the map $\gb_{d,f}$ defined in Lemma \ref{glued_proj}
\end{lem}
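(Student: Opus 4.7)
Fix $(\tau, x, y') \in \rho_d \times [p_{d,f}, p'_{d,f}) \times \tilde C_f$; the task is to produce $y \in \tilde C_d$ with $(\tau, y) \in \mathcal Z_d \cap (\rho_d \times \tilde U_{d,f})$ satisfying $\gb_{d,f}(\tau, y) = ((\tau, x), y')$ (here $V_{\check f} = V_{\check d} + (L^\perp_d \cap M^{\check f}_\R)$ and the first coordinate of the target is read in this splitting). Since $\gb_{d,f}(\tau, y) = (\h_{d,f,\tau}(y), \y_{d,f}(y))$ factors through the slice $Z_\tau$, I will argue fiber-by-fiber in $\tau$, using the case structure of the generating function $F_d$ on $\mathcal Z_d$ from Definition~\ref{gluing_models}. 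Lemma~\ref{glued_proj} guarantees that $\gb_{d,f,\tau}$ is in each case a local diffeomorphism, so it suffices to exhibit a concrete preimage in the two pure regimes ($F_d = F$ and $F_d = G_{e,d}$); the interpolation regime is handled by a soft continuity argument.

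\emph{Edge-model regime.} When $\tau$ lies where $F_d = F$ on the full slice (i.e.\ $\tau \in \rho'_d$, or $\tau \in (\bar r_{e,d}, r'_{e,d})$ inside the "infinity" piece $\mathcal Z^\infty_{e,d}$), the map $\gb_{d,f,\tau}$ agrees with the edge-model map $\overline{\gb}_{d,f}$ from \eqref{edge_loc_mod_r}. By the choice of rescaling in \S\ref{resc_edg}, Corollary~\ref{fibreBndl1} applied to $\overline{\Phi}_d$ with the trimming parameter $p_{d,f}$ provides a diffeomorphism $\overline{\gb}_{d,f} : \overline{\h}_d^{-1}(\mathcal H_{p_{d,f}}) \to Q_{p_{d,f}} \times \tilde C_f$, and $[p_{d,f}, p'_{d,f}) \subset Q_{p_{d,f}}$, so the preimage $y$ exists.

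\emph{Vertex-model regime.} When $\tau \in [r_{e,d}, r''_{e,d}]$, by Definition~\ref{gluing_models} $F_d \equiv G_{e,d}$ on $Z_\tau = \mathcal Z^{0}_{e,d}\cap(\{\tau\}\times\tilde C_d)$, and by the defining property \eqref{overlap} the Lagrangian $\Phi_d(Z_\tau)$ coincides with $\Phi_e(\h_e^{-1}(I^{e,d}_\tau))$ through the diffeomorphism $\gb_{e,d}$. Writing $\tilde y = \gb_{e,d}^{-1}(\tau, y)$ for $y \in Z_\tau \cap \tilde U_{d,f}$, compatibility of projections gives $\y_{d,f}(\y_{e,d}(\tilde y)) = \y_{e,f}(\tilde y)$ and $\x_f = \x_f \circ \x_{e,d}$, whence $\gb_{d,f,\tau} \circ \gb_{e,d} = \gb_{e,f}$ on $\h_e^{-1}(I^{e,d}_\tau) \cap \tilde U_{e,f}$. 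Applying Corollary~\ref{fibreBndl1} to the vertex model with parameter $r_{e,f}$ gives a diffeomorphism $\gb_{e,f} : \h_e^{-1}(\mathcal H_{r_{e,f}}) \to Q_{r_{e,f}} \times \tilde C_f$, and the choice of inner polyhedrons in \S\ref{inner_pol} is arranged so that $\{\tau\} \times [p_{d,f}, p'_{d,f}) \subset Q_{r_{e,f}} \setminus Q_{r'_{e,f}}$ for all $\tau \in \rho_d$. This inclusion, together with the trimming inequalities \eqref{vert_resc_concl}, ensures both that a preimage $\tilde y \in \h_e^{-1}(\mathcal H_{r_{e,f}}) \cap \tilde U_{e,f}$ exists and that $\h_e(\tilde y)$ lies in the correct component $I^{e,d}_\tau$ of $\x_{e,d}^{-1}(\tau) \cap \mathcal H^{[1]}_e$; the corresponding $y = \y_{e,d}(\tilde y)$ does the job.

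\emph{Interpolation regime.} For $\tau \in (r''_{e,d}, \bar r_{e,d})$ the function $F_{d,\tau}$ is a genuine convex combination $\eta G_{e,d,\tau} + (1-\eta) F$ and one cannot apply either model directly. Here I would argue as follows: for fixed $(x, y')$, the set $S_{(x,y')} = \{\tau \in \rho_d \mid (x,y') \in \operatorname{image}(\gb_{d,f,\tau})\}$ is open in $\rho_d$ by the local diffeomorphism property, and the two cases above show it contains the complement of the intervals $(r''_{e,d}, \bar r_{e,d})$ for each $e \supset d$. To conclude $S_{(x,y')} = \rho_d$ by a connectedness argument, I would establish closedness via a properness property: any limit of preimages $y_n$ must lie in $\tilde U_{d,f}$ because $\y_{d,f}(y_n) = y'$ is fixed and, by Lemma~\ref{hnear_faces} applied to $F_{d,\tau}$, $\h_{d,f,\tau}$ blows up near the boundary of $\tilde U_{d,f}$. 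This last step (verifying the asymptotic behavior survives the convex combination, i.e.\ that $\h_{d,f,\tau}$ on a fiber of $\y_{d,f}$ is proper uniformly in $\tau$) is the main obstacle, and is the real content beyond the case analysis; it relies essentially on the negative-definiteness of the Hessian of $F_{d,\tau}$ established inside the proof of Lemma~\ref{glued_proj}, which makes each restricted $\h_{d,f,\tau}$ a strictly monotone diffeomorphism onto an interval of $\Gamma_{d,f}$ whose upper end is $+\infty$.
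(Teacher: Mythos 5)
Your edge-model case matches the paper's first case, but the heart of the lemma is the range $t \in [r_{e,d}, r'_{e,d})$, and there your argument has a genuine gap. Your connectedness strategy for the interpolation regime hinges on a properness claim -- that $\h_{d,f,\tau}$ restricted to a fibre of $\y_{d,f}$ is (uniformly in $\tau$) a monotone map onto an interval unbounded above -- which you acknowledge but do not prove, and which is not a consequence of the negative-definite Hessian alone. The difficulty is that on the interpolation slices the domain is only $Z_\tau = \mathcal Z^0_{e,d} \cap (\{\tau\} \times \tilde C_d)$, a proper open subset of $\{\tau\} \times \tilde C_d$ (the image of a $\gb_{e,d}$-slice), so a fibre of $\y_{d,f}$ inside $Z_\tau$ can terminate at points where $\h_{d,f,\tau}$ is still finite; moreover Lemma \ref{hnear_faces} controls the standard $F$ only, not $G_{e,d}$ nor the convex combination $\eta G_{e,d,\tau} + (1-\eta)F$, so no blow-up of $\h^{+}_{d,f,\tau}$ near $\partial \tilde U_{d,f}$ is available. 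Monotonicity gives injectivity, not surjectivity onto $[p_{d,f}, p'_{d,f})$, so the closedness step of your open-closed argument is unproven and the proposal does not yet establish the lemma. A secondary issue: in your vertex-model regime the reduction to Corollary \ref{fibreBndl1} via $\gb_{e,f}$ requires the inclusion $[p_{d,f}, p'_{d,f}) \times \rho_d \subset Q_{r_{e,f}} \setminus Q_{r'_{e,f}}$ and a check that the resulting preimage lands in the correct component $\h_e^{-1}(\mathcal H_{[r_{e,d}, r'_{e,d})})$ (so that it lies in $\mathcal Z_d$ at all); neither is among the stated conditions of \S\ref{inner_pol}--\ref{resc_trim_vert} and both are asserted rather than verified.

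The paper avoids properness entirely and treats all $t \in [r_{e,d}, r'_{e,d})$, pure and interpolated alike, by a sandwich argument on each fibre of $\y_{d,f}$: it produces two curves $\gamma^{-}_{d,f,t} = (\h^{-}_{d,t})^{-1}(I^{d,f}_q)$ and $\gamma^{+}_{d,f,t} = \y_{e,d}(\h_e^{-1}(I^{e,f}_{x_t}))$, each mapped bijectively onto $\tilde C_f$ by $\y_{d,f}$, on which the pure maps $\h^{\mp}_{d,f,t}$ take exactly the target value $q$ (equation \eqref{im_curves}); the positional inclusions \eqref{position_curve1}--\eqref{position_curve2} (prepared by Lemma \ref{hdomains}, \eqref{e2}, \eqref{vert_resc_concl}, \eqref{trim_condition}, \eqref{trim_neigh}, \eqref{hAepsilon}) order the two intersection points $y^{\pm}$ along the fibre, and strict monotonicity of $\h_{d,f,t}$, $\h^{+}_{d,f,t}$, $\h^{-}_{d,f,t}$ on the fibre (negative-definite Hessians) plus the convex-combination identity \eqref{diff_inter} yield, by an intermediate value argument, a point $y$ between $y^{+}$ and $y^{-}$ with $\h_{d,f,t}(y) = q$. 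Only the specific value $q$ must be hit, so no asymptotic or uniform statement about the interpolated map is ever needed. If you want to salvage your outline, replace the open-closed step by this two-curve intermediate value argument (or prove the uniform properness you invoke, which would essentially reproduce the same preparatory estimates).
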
 

\begin{proof} Consider the description \eqref{g_edge_model} of $\gb_{d,f}$ and the map $\gb_{d,f,t}$ in \eqref{g_edge_slice}. Let $Z_t \subset \tilde C_d$ be as in \eqref{zt0} and let 
\[ x_t = (q, t)  \in [p_{d,f}, p'_{d,f}) \times \rho_d. \]
We have to show that $\{ x_t \} \times \tilde C_f$ is in the image of $\gb_{d,f,t}$.  When $t \in \rho'_d$, then  $Z_t = \{ t \} \times \tilde C_{d}$ and $\gb_{d,f,t}$ coincides with the map $\overline{\gb}_{d,f}$ from \S \ref{resc_edg}. Therefore the claim follows from \eqref{edge_ref_pt} and Corollary \ref{fibreBndl1}.  

Otherwise assume $t \in [ r_{e,d}, r'_{e,d})$. In this case $F_d$ interpolates $G_{e,d}$ and $F$. Let $G_{e,d,t}$ be the restriction of $G_{e,d}$ to $Z_t$ and denote by $\h_{d,t}^+$ and $\h_{d,f}^{-}$ the differentials (with respect to the $y$ coordinates) respectively of $G_{e,d,t}$ and $F$ and let $\h_{d,f,t}^{\pm} = \x_{d,f} \circ \h_{d,t}^{\pm}$. Then we have that 
\begin{equation} \label{diff_inter}
   \begin{split}
          \h_{d,t} & = \eta \h_{d,t}^{+} + (1-\eta) \h_{d,t}^{-} \\
          \h_{d,f, t} & = \eta \h_{d,f,t}^{+} + (1-\eta) \h_{d,f,t}^{-}
   \end{split}
\end{equation}
Observe that $\h_{d,t}^{-}$ coincides with the map $\overline{\h}_d$ from \S \ref{resc_edg}. From Lemma \ref{hdomains} we have
\begin{equation} \label{zt1}
      (\h_{d,t}^{-})^{-1}(H'_d) \subseteq  \tilde {\mathcal A}^{e,d}_{\epsilon'_2} \subseteq \y_{e,d}( \h_{e}^{-1}( I_{t}^{e,d}) ) \subseteq Z_t.
\end{equation}
Given the segment $I^{d,f}_{q} \subseteq H'_{d}$ (see \S \ref{resc_edg}) and the segment $I^{e,f}_{x_t} \subset I_{t}^{e,d}$ define the following curves in $Z_t$
\begin{equation} \label{curve_edge}
\gamma^-_{d,f,t} :=(\h_{d,t}^{-})^{-1}(I^{d,f}_{q}) \quad \text{and} \quad  \gamma^+_{d,f,t} := \y_{e,d}  (\h_e^{-1}( I^{e,f}_{x_t})).
\end{equation}
We have
\begin{equation} \label{position_curve1}
         \gamma^-_{d,f,t} \subset \tilde {\mathcal A}^{e,d}_{\epsilon'_2}. 
\end{equation} 
Moreover, since $I^{e,f}_{x_t} \subset \mathcal H_{r_{e,f}} \cap \mathcal H_{r_{e,d}}$, by \eqref{vert_resc_concl},  \eqref{trim_condition} and \eqref{trim_neigh} we have
\begin{equation} \label{position_curve2}
      \gamma^+_{d,f,t} \subset  \tilde{\mathcal W}^{d,f}_{\epsilon'_2}.
\end{equation} 
Notice that since $\gb_{e,f}$ maps the curve $\h^{-1}_e(I^{e,f}_{x_t})$ one to one onto $x_t \times \tilde C_f$, we have that $\y_{d,f}$ maps the curve $\gamma^+_{d,f,t}$ one to one onto $\tilde C_f$. Similarly $\y_{d,f}$ maps $\gamma^-_{d,f,t}$ one to one onto $\tilde C_f$. Moreover, by construction,
\begin{equation} \label{im_curves}
          \h_{d,f,t}^{+}(\gamma^{+}_{d,f,t}) = \h_{d,f,t}^{-}(\gamma^{-}_{d,f,t})= q \in [p_{d,f}, p'_{d,f}).
\end{equation}
Now fix a fibre $\y_{d,f}^{-1}(y')$ of $\y_{d,f}$. Let $y^+$ and $y^{-}$ be the unique points where this fibre intersects $\gamma^+_{d,f,t}$ and $\gamma^-_{d,f,t}$ respectively. Now recall that the hessians of $F_{d,t}$, $G_{e,d,t}$ and $F$ restricted to a fibre of $\y_{d,f}$ are all negative definite, in particular $\h_{d,f, t}$, $\h_{d,f,t}^{+}$ and $\h_{d,f,t}^{-}$ are all injective. It is then easy to see that \eqref{diff_inter} and \eqref{im_curves} together with inclusions \eqref{position_curve1} and \eqref{position_curve2} imply that there is a point $y$ on this fibre of $\y_{d,f}$, between $y^+$ and $y^-$, such that 
 \[ \h_{d,f,t}(y)= q. \]
Then $\gb_{d,f,t}(y) = (q,y')$. This concludes the proof.
\end{proof}

\begin{defi}[Provisional] \label{new_edge_trim} Given a two dimensional $d \in (P, \nu)$, let $\Phi_d$ be the map in Definition \ref{gluing_models}. Redefine the trimmed domain $\mathcal Z_d$ of $\Phi_d$ to be the open set of points $(t, y)$ such that $\Phi_d(t,y)$ is defined and for all edges $f$ of $d$ satisfying $y \in \tilde U_{d,f}$, we have 
\[ \h_{d,f,t}(t, y)  < p'_{d,f}. \] 
We denote this local model by $(\Phi_d, \mathcal Z_d)$. For all edges $f$ of $d$ also denote 
\begin{equation} \label{zprim_df_prov}
     \mathcal Z_{d,f} = \{ (t,y) \in \mathcal Z_{d} \, | \, y \in \tilde U_{d,f} \ \text{and} \  \h_{d,f}(y,t) \in [p_{d,f} , p'_{d,f}) \times \rho_d \}.
\end{equation}
\end{defi} 

It is clear from the construction that $\mathcal Z_d$ is homeomorphic to $\rho_d \times \tilde C_d$. It is also clear from Lemma \ref{trim_new_const} that $\gb_{d,f}$ gives a diffeomorphism from $\mathcal Z_{d,f}$ to $([p_{d,f} , p'_{d,f}) \times \rho_d) \times \tilde C_f$.  Moreover for every $t \in \rho_d$ the set
\[ Z_{d,t} = \mathcal Z_d \cap (\{ t \} \times \tilde C_d) \]
is homeomorphic to $\tilde C_d$. Also define
\[ Z_{d,f, t} = \mathcal Z_{d,f} \cap (\{ t \} \times \tilde C_d). \]

 The following lemma controls the size of the image of $\h_{d}$. 

\begin{lem} \label{gluing_size} If $H'_d$ is the set defined in \eqref{edge_H1}, we have 
\[ \h_{d}( \mathcal Z_{d} ) \subseteq H'_d \times \rho_d. \]
Moreover, given the set $B_{d,f} \subset L^{\perp}_d$ defined in \S \ref{intorni}, we have that, for all $t \in \rho_d$, $y \in  Z_{d,t}$ satisfies 
\[ \h_{d,t}(y) \in B_{d,f} \]
if and only if $y \in Z_{d,f, t}$. 
\end{lem}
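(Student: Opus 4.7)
The plan is to prove Part 1 by stratifying $t \in \rho_d$ into three regions dictated by the cutoff $\eta$ of Definition \ref{gluing_models}, and then to deduce Part 2 from Part 1 together with the identity $H'_d \cap B_{d,f} = \mathcal H_{[p_{d,f}, p'_{d,f})}$, which will follow from \eqref{ends_in_nbhd} and the disjointness of the ends of $\mathcal H_d$ built into the choices of \S \ref{intorni}.

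For Part 1, the two easy regions are $t \in \rho'_d$ and $t \in [\bar r_{e,d}, r'_{e,d})$, where $\eta(t) = 0$ and $F_d$ coincides with $F$, so $\h_{d,t} = \overline{\h}_d$ and the image lands in $\mathcal H_d$. If one had $\overline{\h}_d(y) \in \mathcal H_{p'_{d,f}}$ for some edge $f$, then \eqref{edge_ref_pt} would force $y \in \tilde U_{d,f}$ with $\overline h_{d,f}(y) \geq p'_{d,f}$, contradicting the trimming condition of Definition \ref{new_edge_trim}; hence $\overline{\h}_d(y) \in H'_d$. Likewise, when $t \in [r_{e,d}, r''_{e,d}]$ (the region $\eta(t) = 1$) one has $F_d = G_{e,d}$ on $\mathcal Z^0_{e,d}$, and the gluing identity \eqref{overlap} identifies $\h_{d,t}(y)$ with the $L^{\perp}_d$-component of $\h_e(\gb_{e,d}^{-1}(t,y)) \in I^{e,d}_t$, which lies in $K_d \cap H'_d$ by \eqref{ieder_inH1}.

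The interpolation range $t \in (r''_{e,d}, \bar r_{e,d})$ is the crux: here $\h_{d,t}(y) = \eta(t)\,\h^+_{d,t}(y) + (1-\eta(t))\,\overline{\h}_d(y)$ is a genuine convex combination of $\h^+_{d,t}(y) \in I^{e,d}_t \subset K_d \cap H'_d$ and $\overline{\h}_d(y) \in \mathcal H_d$. The plan is to split on where $\overline{\h}_d(y)$ lies. If $\overline{\h}_d(y) \in H'_d$, Lemma \ref{hdomains} forces $y \in \tilde{\mathcal A}^{e,d}_{\epsilon'_2}$, so that \eqref{hA_small} refines $\h^+_{d,t}(y)$ to lie in $K'_d$, and property (c) of Proposition-Definition \ref{constrain_sets} places the whole segment in $H'_d$. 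Otherwise $\overline{\h}_d(y) \in \mathcal H_{p'_{d,f}}$ for a unique $f$; the trimming condition gives $\h_{d,f,t}(y) < p'_{d,f}$, and property (d) of Proposition-Definition \ref{constrain_sets} yields $\h_{d,t}(y) \in H'_d$ once one knows $\h^+_{d,t}(y) \in K_{d,f}$. Establishing this last inclusion is the main obstacle: the plan is to transport through the diffeomorphism $\gb_{e,d}$ the information that $y$ must lie in the $f$-arm of $\tilde C_d$ (which in turn follows from $\overline{\h}^{-1}_d(\mathcal H_{p'_{d,f}}) \subset \tilde U_{d,f}$ together with the behaviour of $\overline{\h}_d$ near faces recorded in Lemma \ref{hnear_faces}) and then use \eqref{trim_neigh} to conclude that the corresponding point of $I^{e,d}_t$ sits on its $f$-leg, which is contained in $K_{d,f}$ by \eqref{ieder_inH1}.

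For Part 2, the implication $y \in Z_{d,f,t} \Rightarrow \h_{d,t}(y) \in B_{d,f}$ is immediate: one has $\x_{d,f}(\h_{d,t}(y)) = \h_{d,f,t}(y) \in [p_{d,f}, p'_{d,f})$ by the very definition of $Z_{d,f,t}$, while Part 1 gives $\h_{d,t}(y) \in H'_d$, so $\h_{d,t}(y) \in H'_d \cap \x_{d,f}^{-1}([p_{d,f}, p'_{d,f})) = \mathcal H_{[p_{d,f}, p'_{d,f})} \subset B_{d,f}$. For the converse, $\h_{d,t}(y) \in B_{d,f}$ combined with Part 1 and the same identity localises $\h_{d,t}(y)$ in $\mathcal H_{[p_{d,f}, p'_{d,f})}$; to conclude $y \in \tilde U_{d,f}$ one loops through the three regions of $t$, using \eqref{edge_ref_pt} in the $\eta=0$ case, the diffeomorphism $\gb_{e,d}$ combined with \eqref{trim_neigh} in the $\eta=1$ case, and the diffeomorphism $\gb_{d,f}$ of Lemma \ref{glued_proj} in the interpolation case.
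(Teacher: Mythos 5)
Your Part 1 follows the paper's own route quite closely (the split according to whether $\overline{\h}_d(y)$ lies in $H'_d$ or in some end $\mathcal H_{p'_{d,f}}$, the use of Lemma \ref{hdomains} and \eqref{hA_small} together with property (c), and property (d) in the other case), but the step you yourself flag as the main obstacle is not closed, and the tools you cite for it point the wrong way: to place $\h^+_{d,t}(y)$ in $K_{d,f}$ one needs to transport the information ``$y$ lies in the $f$-arm of $\tilde C_d$'' \emph{back} through $\gb_{e,d}$ into $\tilde C_e$, which is exactly what \eqref{eps3_cond} is designed for (giving $\gb_{e,d}^{-1}(t,y) \in \tilde{\mathcal W}^{e,f}$, hence $\h_e \in \mathcal V_{e,f}$ by Lemma \ref{hNbhd}, hence $\h^+_{d,t}(y) \in \mathcal V_{d,f}$, which intersected with \eqref{ieder_inH1} gives $K_{d,f} \cap H'_d$); the inclusion \eqref{trim_neigh} maps in the opposite direction (from the vertex coamoeba to the edge coamoeba) and Lemma \ref{hnear_faces} alone does not produce the needed membership in $\mathcal V_{d,f}$.

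The more serious gap is in Part 2. For the forward implication you replace the paper's argument by the set identity $H'_d \cap \x_{d,f}^{-1}\bigl([p_{d,f}, p'_{d,f})\bigr) = \mathcal H_{[p_{d,f}, p'_{d,f})}$, but this is strictly stronger than what \eqref{ends_in_nbhd} gives: that equation controls $\mathcal H_d \cap B_{d,f}$ with $B_{d,f} = B_f \times Q_{p_{d,f}}$, which constrains the transverse $L^{\perp}_f$-component to lie in $B_f$ as well, whereas knowing only that $\x_{d,f}(\h_{d,t}(y)) \in [p_{d,f}, p'_{d,f})$ and $\h_{d,t}(y) \in H'_d$ does not localise the point to the $f$-end: the fibres of the (in general skew) projection $\x_{d,f}$ are lines that can meet $H'_d$ again on another leg or in its core, and nothing in \S \ref{intorni} excludes such points from having projection value in $[p_{d,f}, p'_{d,f})$. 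This is precisely why the paper proves the stronger statement $\h_{d,t}(y) \in \mathcal H_{p_{d,f}} - \mathcal H_{p'_{d,f}}$ via the segment argument, using $\h^{-}_{d,t}(y) \in \mathcal H_{p_{d,f}}$, $\h^{+}_{d,t}(y) \in K_{d,f} \cap H'_d$ and properties (b)--(d) of Proposition-Definition \ref{constrain_sets}; your shortcut skips the control of the transverse component. In the converse direction, your interpolation-range step is circular: Lemma \ref{glued_proj} defines $\gb_{d,f}$ only on $\mathcal Z_d \cap (\rho_d \times \tilde U_{d,f})$, so it cannot be used to \emph{conclude} $y \in \tilde U_{d,f}$. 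The paper instead argues by elimination, uniformly in $t$: if $y \in (\h^{-}_{d,t})^{-1}(H_d)$ then $\h_{d,t}(y) \in H_d$, contradicting $\h_{d,t}(y) \in \mathcal H_{p_{d,f}}$; if $y \in \mathcal W^{d,f'}_{\epsilon_3}$ for $f' \neq f$ then $\h_{d,t}(y) \in \mathcal V_{d,f'}$, contradicting \eqref{bidf_vdf}; hence $y \in \mathcal W^{d,f}_{\epsilon_3} \subset \tilde U_{d,f}$. You would need an argument of this kind (or some other non-circular mechanism) to finish both directions of Part 2.
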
 

\begin{proof} For the first inclusion we have to show that for all $t \in \rho_d$
\[ \h_{d,t}(Z_{d,t}) \subseteq H'_d. \]
By construction, when $t \in \rho'_d$, then $\h_{d,t}$ coincides with the map $\overline{\h}_d$ from \S \ref{resc_edg}. Therefore Definition \ref{new_edge_trim} implies that $\h_{d,t}(Z_{d,t})$ coincides with $H'_d$.

Now let $t \in [r_{e,d}, r'_{e,d})$. We use the description \eqref{diff_inter} of $\h_{d,t}$. Inclusion \eqref{ieder_inH1} implies 
\[ \h^{+}_{d,t}(Z_{d,t}) \subset I^{e,d}_t \subset  K_d  \cap H'_d, \]
where the first inclusion follows from the description of $Z_{d,t}$ given in the proof of Lemma \ref{trim_new_const}. Moreover \eqref{hA_small} implies that 
 \[ \h^{+}_{d,t}(\mathcal A^{e,d}_{\epsilon_2'}) \subseteq K'_{d}. \]
Given $y \in Z_{d,t}$, assume 
\[ \h^{-}_{d,t}(y) \in H'_{d}. \]
Then \eqref{zt1} implies $y \in \mathcal A^{e,d}_{\epsilon_2'}$. Therefore $\h_{d,t}(y)$ is on the segment between $\h^{+}_{d,t}(y) \in K'_d$ and $\h^{-}_{d,t}(y)  \in H'_d$. Property $(c)$ of Proposition-Definition \ref{constrain_sets} implies that $\h_{d,t}(y) \in H'_{d}$.

On the other hand suppose $y \notin H'_{d}$, then for some edge $f$ of $d$
\begin{equation}  \label{hminus_in_h}
        \h^{-}_{d,t}(y) \in \mathcal H_{p'_{d,f}}.
\end{equation}
In particular  \eqref{edge_ref_pt} implies $y \in Z_{d,t} \cap \mathcal W^{d,f}_{\epsilon_3}$ and \eqref{eps3_cond} ensures that 
\begin{equation} \label{hplus_inV}
       \h^{+}_{d,t}(y) \in \mathcal V_{d,f},
\end{equation}
where the latter set is as in \eqref{h_nbhoods} for $J = J_{d,f}$. To prove this, recall that
\[ (t, \h^{+}_{d,t}(y)) = \h_e( \gb_{e,d}^{-1}(t,y)). \]
Let $y' = \gb_{e,d}^{-1}(t,y)$. Since $y \in \mathcal W^{d,f}_{\epsilon_3}$ and by \eqref{trim_condition}, $y' \in \y_{e,d}^{-1}( \mathcal W^{d,f}_{\epsilon_3}) \cap \mathcal W^{e,d}_{\epsilon_1}$. Therefore $y' \in \mathcal W^{e,f}$ and $\h_e(y') \in \mathcal V_{e,f}$ by Lemma \ref{hNbhd}. In particular this implies \eqref{hplus_inV}. Now, \eqref{hplus_inV} together with \eqref{ieder_inH1} implies 
\[  \h^{+}_{d,t}(y) \in K_{d,f} \cap H'_d.\]
The latter, together with \eqref{hminus_in_h} and property (d) of Proposition-Definition \ref{constrain_sets} implies $\h_{d,t}(y) \in H'_{d}$. This concludes the proof of first inclusion. 

Now suppose $y \in Z_{d,f, t}$. This implies $\h^{-}_{d,t}(y) \in \mathcal H_{p_{d,f}}$ and, by the above arguments, $\h^+_{d,t}(y) \in K_{d,f} \cap H'_d$. Using properties $(b)-(d)$ of Proposition-Definition \ref{constrain_sets} and the fact that $\h_{d,f,t}(y) \in [p_{d,f}, p'_{d,f})$, we must have
\[ \h_{d,t}(y) \in \mathcal H_{p_{d,f}} - \mathcal H_{p'_{d,f}} \subseteq B_{d,f}. \]
On the other hand suppose $ \h_{d,t}(y) \in B_{d,f}$. In particular
\[ \h_{d,f,t}(y) > p_{d,f}. \]
It is then enough to prove that $y \in \tilde U_{d,f}$. By the first part of the Lemma and by \eqref{ends_in_nbhd}, we must have $\h_{d,t}(y) \in \mathcal H_{p_{d,f}}- \mathcal H_{p'_{d,f}}$. Then we cannot have $y \in (\h^{-}_{d,t})^{-1}(H_d)$, since if this were true, the same arguments as above would imply $\h_{d,t}(y) \in H_d$, which contradicts $\h_{d,t} \in \mathcal H_{p_{d,f}}$.  On the other hand we cannot have $y \in \mathcal W^{d,f'}_{\epsilon_3}$ for some $f' \neq f$, since this would imply $\h_{d,t}(y) \in \mathcal V_{d,f'}$, while \eqref{bidf_vdf} implies $\h_{d,t} \in \mathcal V_{d,f}$. Therefore we must have $y \in \mathcal W^{d,f}_{\epsilon_3}$. In particular $y \in \tilde U_{d,f}$. \end{proof}

\subsection{The local models over faces}
Given an edge $f \in (P, \nu)$, the goal of this subsection is to define a Lagrangian embedding $\Phi_f: \rho_f \times \tilde C_f  \rightarrow M_{\R} \times N_{\R}/N$ which matches with the previous local models on overlaps. 

We need to trim further the local models over vertices by replacing \eqref{vert_first_trim_2} with
\begin{equation} \label{redfin_h2e}
       \mathcal H^{[2]}_e =  \mathcal H^{[1]}_{e}- \bigcup_{d \preceq e, \, \dim d=2} \mathcal H_{r''_{e,d}}.
\end{equation}
Then $\mathcal Z_e$ is as in Definition \ref{defi_loc_mod_vert}. Define, for a three dimensional $e$ and an edge $f$ of $e$ the sets
\[ \bar Y_e =  \mathcal H^{[2]}_e \cap \Gamma_e \quad \text{and} \quad \bar Y_{e,f} = \bar Y_e \cap \rho_f. \]
Notice that $\bar Y_e$ is obtained from $Y'_e$ by removing its intersections with the sets $\rho''_d \times \Gamma^{[1]}_d$ for all two dimensional faces $d$ of $e$. 

Let us now collect some data on $\rho_f \times \tilde C_f$ induced by local models over edges and vertices contained in $\check f$. For every three dimensional $e$ containing $f$, define the following subset of $\mathcal Z_e$:
\[ \mathcal Z_{e,f} = \{ y \in \mathcal Z_e \cap \tilde U_{e,f} \, | \, \h_{e,f}(y) \in \bar Y_{e,f}  \}. \]
We have that by construction and by Corollary \ref{fibreBndl1}, 
\[ \gb_{e,f}: \mathcal Z_{e,f} \rightarrow \bar Y_{e,f} \times \tilde C_f \]
is a diffeomorphism and $\Phi_e(\mathcal Z_{e,f})$ is the graph of the differential of a function $G$ defined on $\bar Y_{e,f} \times \tilde C_f$. Let us rename this function by $G_{e,f}$. 

Similarly, for every two dimensional $d$ containing $f$, in \eqref{zprim_df_prov} we defined the subset $\mathcal Z_{d,f}$ of $\mathcal Z_d$. Then by Lemmas \ref{glued_proj} and \ref{trim_new_const}, 
\[ \gb_{d,f}: \mathcal Z_{d,f} \rightarrow ((p_{d,f}, p'_{d,f}) \times \rho_d) \times \tilde C_f \]
is a diffeomorphism and $\Phi_d(\mathcal Z_{d,f})$ is the graph of the differential of a function $G$ defined on $ ((p_{d,f}, p'_{d,f}) \times \rho_d) \times \tilde C_f$. Rename this function by $G_{d,f}$. 

Notice that when $d$ is a face of $e$, the domains of definition of the two functions $G_{e,f}$ and $G_{d,f}$ overlap, but we have the following

\begin{lem} When $d$ is a face of $e$ the two functions $G_{e,f}$ and $G_{d,f}$ coincide on the overlap $(\bar Y_{e,f}  \cap ((p_{d,f}, p'_{d,f}) \times \rho_d)) \times \tilde C_f$.
\end{lem}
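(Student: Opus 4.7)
The plan is to show that on the overlap the two Lagrangian pieces $\Phi_e(\mathcal Z_{e,f})$ and $\Phi_d(\mathcal Z_{d,f})$ coincide as subsets of $M_{\R}\times N_{\R}/N$, so that $G_{e,f}$ and $G_{d,f}$ are both generating functions of the \emph{same} Lagrangian via the \emph{same} projection to $V_{\check f}\times \tilde C_f$, and then to fix the additive constant by the composition rule for partial Legendre transforms.

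First I locate the overlap base precisely. Using the decomposition $V_{\check f}=(L^{\perp}_d\cap M^{\check f}_{\R})\oplus V_{\check d}$ with origin at $\check e$, a point $x$ in the overlap writes $x=x_1+t$ with $x_1\in(p_{d,f},p'_{d,f})\subset\Gamma_{d,f}$ and $t\in\rho_d$. Since $L^{\perp}_d$ lies in the kernel of $\x_{e,d}$, one has $\x_{e,d}(x)=t$. By the redefinition \eqref{redfin_h2e} of $\mathcal H^{[2]}_e$, the condition $x\in\bar Y_{e,f}$ forces $\x_{e,d}(x)\notin Q_{r''_{e,d}}$, i.e.\ $t\in [r_{e,d},r''_{e,d})$. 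On this range the cutoff $\eta$ of Definition~\ref{gluing_models} is identically $1$, so $F_d=G_{e,d}$ on the corresponding slice of $\rho_d\times\tilde C_d$, and therefore
\[ \Phi_d(t,y)=\Phi_e\bigl(\gb_{e,d}^{-1}(t,y)\bigr) \]
on that slice. Consequently the images $\Phi_e(\mathcal Z_{e,f})$ and $\Phi_d(\mathcal Z_{d,f})$ coincide as subsets of $M_{\R}\times N_{\R}/N$ over the entire overlap base.

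Because the projections $\gb_{e,f}$ and $\gb_{d,f}$ both come from the same compatible system---the maps $\y_{e,f}$ and $\y_{d,f}$ are both induced by $\y_f$, and dually for $\x_{e,f}$ and $\x_{d,f}$---they deliver the same map from this common Lagrangian to $V_{\check f}\times\tilde C_f$. Hence $G_{e,f}$ and $G_{d,f}$ generate the same Lagrangian via the same projection, so $d(G_{e,f}-G_{d,f})=0$ on the overlap. To pin down the additive constant, I would appeal to the composition rule for partial Legendre transforms: $G_{d,f}$ is the partial Legendre transform of $G_{e,d}$ in the direction $L_f/L_d$, and $G_{e,d}$ is itself the partial Legendre transform of $F$ in the direction $L_d$, so their composition equals the simultaneous Legendre transform of $F$ in the full direction $L_f$, which is precisely $G_{e,f}$.

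The main obstacle I foresee is this last step. The Legendre transforms in Proposition~\ref{faceproj} are characterized only implicitly as generating functions, so some bookkeeping of constants is needed to make the composition rule yield an equality on the nose. Should that become delicate, an alternative is to evaluate $G_{e,f}$ and $G_{d,f}$ at a single point of the overlap using the explicit formula $G=\inn{\h}{\y}-F$ read from the corresponding point of the common Lagrangian; the two values coincide there, which fixes the additive constant to zero on each connected component of the overlap.
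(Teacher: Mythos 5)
Your proposal is correct and follows essentially the same route as the paper: you check that over the overlap the cutoff $\eta$ is identically $1$, so the edge model coincides there with the vertex model (this is exactly the overlap identity \eqref{overlap} that the paper's proof cites), and you then use that both $G_{e,f}$ and $G_{d,f}$ arise as (iterated partial) Legendre transforms of the same data, which pins down the functions themselves and not merely their differentials. The paper condenses precisely these two observations into its two-line proof, so your more detailed write-up is just an expansion of the same argument.
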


\begin{proof}
This is just a consequence of the fact that the local models over $\check e$ and $\check d$ coincide on the overlaps (as in \eqref{overlap}) and the two functions are defined via a Legendre transform. 
\end{proof}

As a consequence, if we consider all the functions $G_{e,f}$, where $e$ varies among two and three dimensional faces containing $f$, then these patch together to give a unique smooth function
\[ G_f : (\rho_f - \rho'_f) \times \tilde C_f \rightarrow \R. \]
We now wish to extend $G_f$ to the whole of $\rho_f \times \tilde C_f$ using a partition of unity interpolating $G_f$ with the zero function. Let $\rho''_f$ be the third inner polyhedron satisfying \eqref{inner_pols_incl} and consider a smooth function $\eta: \rho_f \rightarrow \R$ such that $0 \leq \rho \leq 1$ and
\[ \eta(x) = \begin{cases} 
                         1 \quad \text{if} \ x \in \rho_f - \rho''_f \\
                          0 \quad \text{on neighborhood of}  \  \rho'_f
                   \end{cases}
\]
Define $F_f: \rho_f \times \tilde C_f \rightarrow \R$ by 
\[ F_f(x,y) = \begin{cases} 
                         \eta(x) G_f(x,y) \quad \text{if} \ x \in \rho_f - \rho'_f  \\
                          0 \quad \text{if} \ x \in \rho'_f.
          \end{cases}
\]

\begin{defi} \label{loc_mod_face}
Given an edge $f \in (P, \nu)$, let 
\[ \mathcal Z_f = \rho_f \times \tilde C_f. \] 
The local model over $\check f$ is the map
\[ \begin{split}
              \Phi_{f}: \mathcal Z_f & \rightarrow (L_f \times L^{\perp}_f) \times \mathcal Z_f \\
                                                       (x,y) & \mapsto      ( dF_f , \, (x, \, y) ).
    \end{split} \]
We also denote by $\h_f:  \mathcal Z_f \rightarrow L^{\perp}_f \times \rho_f$ the right composition of $\Phi_f$ with the projection onto  $L^{\perp}_f \times \rho_f$.
\end{defi}

\begin{lem} \label{bound_loc_mod_face} Given an edge $f \in (P, \nu)$ and the local model in Definition \ref{loc_mod_face} we have
\[ \h_f(\mathcal Z_f) \subseteq \rho_f \times B_f \]
\end{lem}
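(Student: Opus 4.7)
The plan is to study the two components of $\h_f(x,y)$ under the natural decomposition $M_{\R} \cong V_{\check f} \oplus L^{\perp}_f$ induced by the projection $\x_f$. Since $\Phi_f(x,y) = (dF_f,(x,y))$, if I write $\h_f(x,y) = (x, k(x,y))$, the $V_{\check f}$-component is automatically the base point $x \in \rho_f$, while the $L^{\perp}_f$-component $k(x,y)$ corresponds (via Lemma \ref{cotangent}, up to sign) to the partial derivative of $F_f$ in the $y$-direction. The lemma therefore reduces to the claim $k(x,y) \in B_f$ for all $(x,y) \in \mathcal Z_f$.

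If $x \in \rho'_f$ then $F_f \equiv 0$ on $\{x\} \times \tilde C_f$ by construction, so $k(x,y) = 0 \in B_f$. For $x \in \rho_f \setminus \rho'_f$ we have $F_f(x,y) = \eta(x) G_f(x,y)$, hence $k(x,y) = \eta(x)\, k_G(x,y)$, where $k_G$ is the analogous $L^{\perp}_f$-component extracted from $dG_f$. Since $B_f$ is an interval containing $0$ and $\eta(x) \in [0,1]$, convexity of $B_f$ further reduces the claim to proving $k_G(x,y) \in B_f$ on all of $(\rho_f \setminus \rho'_f) \times \tilde C_f$.

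For this I would use the patching that defines $G_f$: it equals $G_{e,f}$ on $\bar Y_{e,f} \times \tilde C_f$ for each three-dimensional $e \supset f$, and $G_{d,f}$ on $((p_{d,f}, p'_{d,f}) \times \rho_d) \times \tilde C_f$ for each two-dimensional $d \supset f$, with compatible values on the overlap. On the first piece, $\Phi_e(\mathcal Z_{e,f})$ is by construction the graph of $dG_{e,f}$, so $k_G(x,y)$ is exactly the $L^{\perp}_f$-component of the position $\h_e(\gb_{e,f}^{-1}(x,y))$; verifying $\bar Y_{e,f} \subseteq Q_{r_{e,f}}$ from the way $\rho_f$ is positioned relative to the vertex $r_{e,f}$ along $\Gamma_{e,f}$, Corollary \ref{fibreBndl1} then places $\mathcal Z_{e,f}$ inside $\h^{-1}_e(\mathcal H_{r_{e,f}})$, and property \eqref{schiacciato}, $\mathcal H_{r_{e,f}} \subseteq B_f \times Q_{r_{e,f}}$, gives $k_G(x,y) \in B_f$. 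On the second piece, Lemma \ref{gluing_size} immediately yields $\h_d(\mathcal Z_{d,f}) \subseteq \rho_d \times B_{d,f}$; under the transverse direct-sum decomposition $L^{\perp}_d = L^{\perp}_f \oplus \spn(\Gamma_{d,f})$ (valid because $L^{\perp}_f$ complements $M^{\check f}_{\R}$ in $M_{\R}$) and the defining product $B_{d,f} = B_f \times Q_{p_{d,f}}$ from \S \ref{intorni}, the $L^{\perp}_f$-component of the position once again lies in $B_f$.

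The main obstacle I anticipate is the sign and identification bookkeeping around Lemma \ref{cotangent}: translating derivatives of $G_{e,f}$ or $G_{d,f}$ in cotangent-fibre coordinates into honest $L^{\perp}_f$-displacements of positions in $M_{\R}$, and checking that the two descriptions of $k_G$ yield the same element of $B_f$ on the overlap $\bar Y_{e,f} \cap ((p_{d,f}, p'_{d,f}) \times \rho_d)$; this last point is precisely the compatibility between $G_{e,f}$ and $G_{d,f}$ established just before Definition \ref{loc_mod_face}.
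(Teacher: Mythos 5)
Your proposal is correct and takes essentially the same route as the paper's proof: decompose the differential so that $\h_f(x,y)$ has $V_{\check f}$-component $x$ and $L^{\perp}_f$-component $\eta(x)\, d_yG_f$, treat $\rho'_f$ trivially, and bound the $L^{\perp}_f$-component on the two remaining pieces via \eqref{schiacciato} (through $\mathcal H_{r_{e,f}} \subseteq B_f \times Q_{r_{e,f}}$) and Lemma \ref{gluing_size} (through $B_{d,f} = B_f \times Q_{p_{d,f}}$), concluding with $\eta(x) \in [0,1]$ and the fact that $B_f$ is an interval containing the origin. This matches the paper's argument step for step, including the use of the overlap compatibility of $G_{e,f}$ and $G_{d,f}$.
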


\begin{proof} We have that the differential of $F_{f}$ decomposes as the sum $dF_{f} = d_xF_{f} + d_yF_{f}$, i.e. as the sum of the differentials with respect to the $x$ and $y$ coordinates respectively. By the identification of $L_{f}$ and $L^{\perp}_{f}$ with the cotangent fibres of $\rho_f$ and $T_f$ respectively, we have that $d_xF_{f} \in L_f$ and $d_yF_{f} \in L^{\perp}_f$. Then
\[ \h_f (x,y) = ( d_yF_f , x). \]
When $(x,y) \in \rho'_f \times \tilde C_f$, then $F_f =0$, therefore $\h_f(x,y) \in \{ 0 \} \times \rho'_f \subseteq \rho_f \times B_f$. Otherwise, when $x \in \rho_f - \rho'_f$, then 
\[ \h_f (x,y) = (\eta d_yG_f, x). \]
Let 
\[ \h^+_f(x,y) = ( d_yG_f, x). \]
If $x \in \bar Y_{e,f}$ for some three dimensional $e$ containing $f$, then $G_f = G_{e,f}$ and by construction 
\[ \h^+_f(x,y) = \h_e( \gb_{e,f}^{-1}(x,y)) \]
i.e. $\h^+_f(x,y) \in \mathcal H_{r_{e,f}}$. Therefore, by \eqref{schiacciato}, $\h^+_f(x,y) \in B_f \times \rho_f$. In particular, since $\eta(x) \in [0,1]$, also $\h_f(x,y) \in B_f \times \rho_f$.

Similarly, if $(x,y) \in ((p_{d,f}, p'_{d,f}) \times \rho_d)) \times \tilde C_f$ for some two dimensional $d$ containing $f$, then $G_f = G_{d,f}$ and 
\[ \h^+_f(x,y) = \h_d( (\gb_{d,f})^{-1}(x,y)). \]
Therefore $\h^+_f(x,y) \in B_f \times \rho_f$ by Lemma \ref{gluing_size}. In particular also $\h_f(x,y) \in B_f \times \rho_f$.
\end{proof}
\subsection{The last step}
We now wish to glue all the pieces together to form the smooth Lagrangian submanifold $\mathcal L$ lifting $\Xi$. First we need to trim further the local models over vertices and edges. 

\begin{defi}[Final] Given a three dimensional $e \in (P, \nu)$, consider the local model at $\check e$ given by \eqref{local_vert}. Redefine the sets $\mathcal H^{[1]}_e$ and $\mathcal H^{[2]}_e$ as 
\begin{equation} \label{final_def_h2e}
   \begin{split}
       \mathcal H^{[1]}_e &=  \mathcal H_{e}- \bigcup_{f \preceq e, \, \dim f=1} \mathcal H_{r''_{e,f}}, \\
        \mathcal H^{[2]}_e&=  \mathcal H^{[1]}_{e}- \bigcup_{d \preceq e, \, \dim d=2} \mathcal H_{r''_{e,d}}.
   \end{split}
\end{equation}
Then restrict the domain of $\Phi_e$ to $\mathcal Z_e = \h^{-1}_e(\mathcal H^{[2]}_e)$. Notice that we have 
\[ \mathcal H^{[2]}_e \cap \Gamma_e = Y''_e, \]
where the latter set was defined in \S \ref{inner_pol}. For every edge of $e$ we also redefine the subsets $\mathcal Z_{e,f}$ of $\mathcal Z_e$ as
\[ \mathcal Z_{e,f} = \{ y \in \mathcal Z_e \cap \tilde U_{e,f} \, | \, \h_{e,f}(y) \in Y''_{e} \cap \rho_f   \}. \]
Notice that 
\begin{equation} \label{zef_hef}
 \mathcal Z_{e,f} = \h_{e}^{-1}( \mathcal H^{[2]}_e \cap \mathcal H_{r_{e,f}} )
\end{equation}
\end{defi}

Similarly we trim the local models along the edges. 

\begin{defi}[Final] \label{edge_trim_final} Given a two dimensional $d \in (P, \nu)$, let $\Phi_d$ be the map in Definition \ref{gluing_models}. We redefine the trimmed domain $\mathcal Z_d$ of $\Phi_d$ to be the open set of points $(t, y)$ such that $\Phi_d(t,y)$ is defined and for all edges $f$ of $d$ satisfying $y \in \tilde U_{d,f}$, we have 
\[ \h_{d,f,t}(t, y)  < p''_{d,f}. \] 
We denote this local model by $(\Phi_d, \mathcal Z_d)$. For all edges $f$ of $d$ also denote 
\begin{equation} \label{zprim_df}
     \mathcal Z_{d,f} = \{ (t,y) \in \mathcal Z_{d} \, | \, y \in \tilde U_{d,f} \ \text{and} \  \h_{d,f}(t,y) \in [p_{d,f} , p''_{d,f}) \times \rho_d \}.
\end{equation}
\end{defi}

Notice that by construction we have the following overlaps. Given a three dimensional $e$, for every two dimensional face $d$ of $e$ we have
\begin{equation} \label{overlap_vert_edge} 
 \Phi_e(\h_{e}^{-1}(  \mathcal H_{[r_{e,d}, r''_{e,d})})) = \Phi_d(\mathcal Z_{d} \cap ([r_{e,d}, r''_{e,d}) \times \tilde C_d) ),
\end{equation}
while for every edge $f$ of $e$
\begin{equation} \label{overlap_vert_face} 
 \Phi_e(\mathcal Z_{e,f}) = \Phi_f((Y''_{e} \cap \rho_f) \times \tilde C_f ).
\end{equation}
Given a two dimensional $d$ and an edge $f$ of $d$ we have
\begin{equation} \label{overlap_edge_face} 
 \Phi_d(\mathcal Z_{d,f}) = \Phi_f(([p_{d,f}, p''_{d,f}) \times \rho_d) \times \tilde C_f ).
\end{equation}

Let us now glue all the pieces together. 

\begin{defi} 
A Lagrangian smooth lift of $\Xi$ is  defined to be the following subset of $M_{\R} \times N_{\R}/N$ 
\begin{equation} \label{L}
 \mathcal L = \bigcup_{1 \leq \dim e \leq 3} \Phi_e (\mathcal Z_e) 
\end{equation}
\end{defi}

Finally we can prove the following.

\begin{thm} \ $\mathcal L$ is a closed Lagrangian submanifold of $M_{\R} \times N_{\R}/N$ homeomorphic to $\hat \Xi$. 
\end{thm}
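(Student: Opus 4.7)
The plan is to assemble $\mathcal L$ from the three layers of local models by showing (i) each piece is itself a smooth embedded Lagrangian, (ii) the pieces glue smoothly on the prescribed overlaps, (iii) no extra overlaps occur between pieces associated to non-incident strata, and (iv) the resulting cell decomposition matches that of $\hat\Xi$.

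First I would check that each of $\Phi_e(\mathcal Z_e)$, $\Phi_d(\mathcal Z_d)$, $\Phi_f(\mathcal Z_f)$ is a smooth embedded Lagrangian. After identifying a suitable covering of $M_\R\times N_\R/N$ with the cotangent bundle of $V_{\check e}\times T_e$ (resp.~$V_{\check d}\times T_d$, $V_{\check f}\times T_f$) via Lemma \ref{cotangent} and Remark \ref{edge_prod_sympl}, each map is by construction the graph of the differential of an exact function on its domain; the potentials $F$, $F_d$, $F_f$ are smooth because the interpolating cutoffs $\eta$ were chosen smooth. Smoothness of the embeddings then follows from Lemma \ref{smoothExt} for the pair-of-pants model, and from Lemma \ref{glued_proj} together with Lemma \ref{trim_new_const} for the interpolated edge models.

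Second I would verify smooth gluing of the pieces. The three identities \eqref{overlap_vert_edge}, \eqref{overlap_vert_face}, \eqref{overlap_edge_face} assert that the images of incident local models coincide on the corresponding overlap regions, and on each overlap both sides are graphs of exact one-forms obtained as mutually compatible Legendre transforms of the same common function, so the parametrizations differ by a diffeomorphism of domains; hence the union is a smooth manifold along those overlaps. To rule out spurious intersections between pieces associated to non-incident faces, I would use the disjointness conditions (1)--(4) of \S\ref{intorni} together with the containment estimates \eqref{h2_in_nbhd}, \eqref{body_in_nbhd} and Lemma \ref{bound_loc_mod_face}, which confine $\h_e(\mathcal Z_e)$, $\h_d(\mathcal Z_d)$, $\h_f(\mathcal Z_f)$ to the pairwise almost-disjoint neighborhoods $B_e$, $\rho_d\times B_d$, $\rho_f\times B_f$.

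Third, closedness: each domain is open, but it was trimmed at the \emph{double-primed} reference points $r''_{e,d}$, $r''_{e,f}$, $p''_{d,f}$, which by \eqref{inner_pols_incl} lie strictly interior to the open trim thresholds $r'$, $p'$ of the adjacent layer; therefore every limit of a sequence in $\Phi_e(\mathcal Z_e)$ that escapes $\mathcal Z_e$ already lies in the interior of $\Phi_d(\mathcal Z_d)$ or $\Phi_f(\mathcal Z_f)$, and similarly for the edge-to-face transition. Combined with the fact that the $\Phi_f(\mathcal Z_f)$ form the outermost pieces whose domains $\rho_f\times\tilde C_f$ are closed in the natural sense (since $\rho_f$ is closed in $\check f$), this gives closedness of $\mathcal L$ in $M_\R\times N_\R/N$.

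Finally, for the homeomorphism with $\hat\Xi=\bigcup_e\check e\times C_e$, I would build it cell by cell: each $\Phi_e(\mathcal Z_e)$ is identified with a trimmed star-neighborhood of $\check e$ in $\hat\Xi$ using Proposition \ref{pairpants_PLft} together with $\mathcal H_e^{[2]}\cap\Gamma_e=Y''_e$; each $\Phi_d(\mathcal Z_d)$ is identified with $\rho_d\times C_d$ via $\gb_{d,\bullet}$ and Lemma \ref{gluing_size}; each $\Phi_f(\mathcal Z_f)$ with $\rho_f\times C_f$ by Definition \ref{loc_mod_face}. The cellular homeomorphisms agree on overlaps because on each overlap both sides are parametrized by the same underlying piece of $\Xi\times N_\R/N$, so they patch to a global homeomorphism $\mathcal L\to\hat\Xi$. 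The main obstacle I expect is bookkeeping: ruling out non-adjacent-cell intersections and confirming that the interpolation regions contribute no new critical or self-intersection behavior---both points rely on the carefully chosen chain of inclusions $\rho'_f\subset\rho''_f\subset\rho_f$ and the estimates of Lemmas \ref{gluing_size} and \ref{bound_loc_mod_face}.
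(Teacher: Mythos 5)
Your outline reproduces the paper's proof strategy almost step for step: Lagrangianness of each piece because the local models are graphs of exact one-forms, disjointness of models attached to non-incident strata via conditions (1)--(4) of \S\ref{intorni} together with \eqref{h2_in_nbhd}, \eqref{body_in_nbhd}, Lemmas \ref{gluing_size} and \ref{bound_loc_mod_face}, closedness from the nesting $\rho'_f\subset\rho''_f\subset\rho_f$ of trimming parameters, and the cell-by-cell homeomorphism with $\hat\Xi$ built from Proposition \ref{pairpants_PLft} and the identities $\mathcal H^{[2]}_e\cap\Gamma_e=Y''_e$. All of that matches the paper.

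The one genuine gap is in your treatment of \emph{incident} pieces. Knowing that the identities \eqref{overlap_vert_edge}--\eqref{overlap_edge_face} hold on the prescribed transition regions does not by itself make the union an embedded submanifold: you must also show that two incident local models intersect \emph{only} along those regions, and this is precisely what the paper isolates as the crux (``it is enough to prove that these are the only possible intersections''). Your confinement argument is explicitly deployed only for non-incident pairs. For a vertex model and an adjacent edge model, for instance, one needs Lemma \ref{gluing_size} (giving $\h_d(\mathcal Z_d)\subseteq H'_d\times\rho_d$), the inclusion \eqref{body_in_nbhd} ($H'_d\subset B_d$) and the equality \eqref{Bd_inters_H} to conclude that $\h_d(\mathcal Z_d)\cap\h_e(\mathcal Z_e)\subseteq\mathcal H_{[r_{e,d},r''_{e,d})}$, after which \eqref{overlap_vert_edge} upgrades this containment to the prescribed identification; analogous containments, using \eqref{schiacciato}, \eqref{ends_in_nbhd} and Lemma \ref{bound_loc_mod_face}, are needed for the vertex--face and edge--face pairs. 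The ingredients are ones you already cite, but without this containment step the interpolated edge and face models could a priori re-enter the image of a neighboring model away from the gluing zone, producing self-intersections; adding it makes your argument complete and essentially identical to the paper's.
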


\begin{proof} Since local models are graphs, the subsets $\Phi_e(\mathcal Z_e)$ are Lagrangian submanifolds, for all $e$. It is enough to prove that \eqref{overlap_vert_edge}--\eqref{overlap_edge_face} are the only possible intersections between local models. 

Inclusions \eqref{h2_in_nbhd} and \eqref{body_in_nbhd}, Lemmas  \ref{gluing_size}  and \ref{bound_loc_mod_face} and conditions $(1)$ and $(2)$ of \S \ref{intorni} imply that given two distinct simplices $\check e_1$ and $\check e_2$ of the same dimension then 
\[ \Phi_{e_1}(\mathcal Z_{e_1}) \cap \Phi_{e_2}(\mathcal Z_{e_2}) = \emptyset. \]

Let $\check e$ and $\check d$ be such that $\dim \check e < \dim \check d$.  If $\check e$ is not a face of $\check d$, then  inclusions \eqref{h2_in_nbhd} and \eqref{body_in_nbhd},  Lemmas \ref{gluing_size}  and \ref{bound_loc_mod_face} and conditions (3) and (4) of \S \ref{intorni} imply that 
\[ \Phi_e(\mathcal Z_e) \cap \Phi_{d}(\mathcal Z_{d}) = \emptyset. \]

Suppose now that $\check e$ is a vertex of an edge $\check d$, then Lemma \ref{gluing_size}, inclusion \eqref{body_in_nbhd} and \eqref{Bd_inters_H} imply that 
\[ \h_d(\mathcal Z_d) \cap \h_e(\mathcal Z_e) \subseteq \mathcal H_{[r_{e,d}, r''_{e,d})}.\]
Thus \eqref{overlap_vert_edge} implies that the latter inclusion is an equality and that 
\[ \Phi_d(\mathcal Z_d) \cap \Phi_e(\mathcal Z_e) =  \Phi_e(\h_{e}^{-1}( \mathcal H_{[r_{e,d}, r''_{e,d})})) = \Phi_d(\mathcal Z_d \cap ([r_{e,d}, r''_{e,d}) \times \tilde C_d)). \]
Similar arguments, using Lemmas \ref{bound_loc_mod_face} and \ref{gluing_size}, show in the remaining cases that, whenever $\check e \preceq \check d$,  then $\Phi(\mathcal Z_e)$ and $\Phi (\mathcal Z_d)$ intersect as expected. 
This concludes the proof of the fact that $\mathcal L$ is a submanifold. The closure of $\mathcal L$ is a consequence of the construction. 

Let us prove that $\mathcal L$ is homeomorphic to $\hat \Xi$. Let us first describe a decomposition of $\hat \Xi$. Given an $e \in (P, \nu)$, of dimension $2$ or $3$, consider the subset $Y''_e \subset \Xi$ as defined in \S \ref{inner_pol} and let $\hat Y''_e \subset \hat \Xi$ be its PL-lift. Then 
\[   \hat \Xi = \left( \bigcup_{\dim e = 2,3} \hat Y''_e \right) \cup \left( \bigcup_{\dim f = 1} (\rho''_f \times \tilde C_f) \right).
\]
On the other hand we also have the following decomposition of $\mathcal L$
\[
  \mathcal L = \left( \bigcup_{\dim e = 3} \Phi_e \left( \overline{\mathcal Z}_e \right) \right) \cup \left( \bigcup_{\dim d = 1,2} \Phi_{d} \left( \overline{\mathcal Z}_d \cap  (\rho''_d \times \tilde C_d) \right) \right)  ,
\]
where $\overline{\mathcal Z}_e$ and $\overline{\mathcal Z}_d$ denote the closures of those sets inside $\tilde C_e$ and $\rho_d \times \tilde C_d$ respectively. We have that
\[ \overline{\mathcal Z}_e = \h_e^{-1} \left( \overline{ \mathcal H}^{[2]}_e \right) \]
and by construction and by Proposition \ref{pairpants_PLft} we have the homeomorphism 
\[ \Phi_e \left( \overline{\mathcal Z}_e \right) \cong \hat Y''_e.  \]
Similarily
\[ \Phi'_{d} \left( \overline{\mathcal Z}'_d \cap  (\rho''_d \times \tilde C_d) \right) \cong \hat Y''_d\]
when $d$ has dimension $1$ or $2$. It is also clear that one can arrange these homeomorphisms to match on the intersections. 

To construct a family $\mathcal L_t$ which converges to $\hat \Xi$ in the Hausdorff topology one can uniformly scale the local models by some parameter $t$ and then glue everything together as above. 
\end{proof}

\section{On more general examples and applications}
In \cite{lag_pants} we gave various generalizations and examples in the case of Lagrangian lifts of tropical curves. We expect that similar generalizations and examples extend to the case of tropical surfaces, although with some additional subtleties. We briefly comment here these ideas, referring the reader either to \cite{lag_pants} when the details are a straight forward generalization or to future work in the more delicate cases. We will use the same notations as in Section \ref{main_results}.

\subsection{Different lifts of the same tropical hypersurface} 
As we did for curves in \S 5.1 of \cite{lag_pants}, we can twist the Lagrangian lift of a tropical hypersurface by local sections. Let $\check f$ be a polyhedron of $\Xi$ of dimension $k=1, \ldots, n$ and let $C_f \subset N^f_{\R}/N^f$ be the standard coamoeba associated to $f$. Given a smooth section 
\begin{equation} \label{section_edge}
        \sigma_f : \check f \rightarrow \check f \times T
\end{equation}
Define
\begin{equation} \label{general_lift_edge}
       \hat f_{\sigma_f} = C_f \cdot \sigma_f,
\end{equation}
where the righthand side means that for every $x \in \check f$, we consider the set $C_f \cdot \sigma_f(x)$ as a subset of the orbit of $\sigma_f(x)$ under the action of $N^f_{\R}/N^f$ on $T$. Given the quotient
\begin{equation}  \label{quotient_edge}
     \alpha: \check f \times T \rightarrow \check f \times \frac{T}{N^f_{\R}}
\end{equation}
then the righthand side is naturally a symplectic manifold. We have that $\hat f_{\sigma_f}$ is Lagrangian (at its smooth points) if and only if $\alpha \circ \sigma_f$ is a Lagrangian section of the quotient. So we must impose this condition. Now define the twisted PL-lift to be
\[ \hat \Xi_{\sigma} = \left( \bigcup_{\dim e=n+1} \check{e} \times C_e \right) \cup \left( \bigcup_{1 \leq \dim f \leq n} \hat f_{\sigma_f} \right). \] 
In order for this to be a topological manifold we must impose suitable boundary conditions on the sections $\sigma_f$, so that everything matches nicely. The smoothing $\mathcal L_{\sigma}$ of $\Xi_{\sigma}$ can be done by suitably adapting the proof of Section \ref{main_results}. 

\begin{rem} We expect that such lifts should be classified by a sheaf of multivalued piecewise linear integral functions, in the spirit of the Gross-Siebert program \cite{G-Siebert2003}. Some examples of Lagrangian spheres constructed from piecewise linear integral functions were given in \cite{onHmstoric}, where the underlying tropical surface was just a disk. Moreover, we also expect that the difference $\mathcal L - \mathcal L_{\sigma}$ should be, in some sense, related to Lagrangian lifts of lower dimensional tropical varieties. For instance, suppose the lift $\mathcal L_{\sigma}$ is constructed from a  piecewise linear integral functions $\sigma$, then the difference should be related to the tropical subvariety given by the non-smooth locus of $\sigma$. For the relevance of the different lifts of the same tropical variety in homological mirror symmetry see Section 6.3 of \cite{diri_branes} and \cite{onHmstoric}.
\end{rem}
\subsection{Non smooth tropical hypersurfaces}
We expect to be able to lift also non-smooth tropical hypersurfaces, namely those given by not necessarily unimodal subdivisions of $P$. An easy case is when $P \subset N_{\R}$ is an integral $n+1$-dimensional simplex (not elementary), with no subdivision. Indeed let $N' \subset N$ be the smallest sublattice in which $P$ is an elementary integral simplex and let $M' \subset M_{\R}$ be its dual.  Then the associated tropical subvariety $\Xi \subset M_{\R}$ is a standard tropical hyperplane as a tropical subvariety of $M'_{\R}$. Denote the torus
\[ T' = \frac{N_{\R}}{N'}. \]
Inside $T'$ we have the standard Lagrangian coamoeba $C'$ associated to $P$ and $\Xi$. The action of $N'$ on $T$ defines a covering map
\[ \beta: T \rightarrow T'.\]
Then we can define 
\[ C= \beta^{-1}(C') \]
Given the function $F': C' \rightarrow \R$ defined in  \eqref{Fglob}, we let 
\[ F= F' \circ \beta \] 
on $C_e$. We define the Lagrangian lift of $\Xi$ to be the graph of the differential of $F$ extended to the real blow up of $C_e$ at its vertices. 

\begin{ex} \label{singular_ex} 
An interesting case is when $N=\Z^{n+1}$, $\{u_1, \ldots, u_{n+1} \}$ is the standard basis, $u_0$ is defined as in \eqref{u0} and
\[ P= \conv \{ u_0, \ldots, u_{n+1} \}. \]
Then $\beta: T \rightarrow T'$ is a covering of degree $n+2$.  The associated tropical hypersurface $\Xi$ is the fan whose rays are generated by the vectors 
\[ \xi_0 = u_0, \quad \text{and} \quad \xi_j = u_0 + (n+2) u_j \]
and the maximal cones are those spanned by all collections of $n$ rays.  
\end{ex}

\subsection{Lagrangian submanifolds in toric varieties}
We wish to generalize to higher dimensions the examples given in Section 6 of \cite{lag_pants} of Lagrangian submanifolds inside a toric variety which lift tropical curves in the moment polytope. We have not yet worked out all the details, since the construction is not as straight forward as in the case of curves, therefore we will only sketch some examples and point out where the difficulties are. Let $\dim M_{\R} = 3$ and let $\Delta \subset M_{\R}$ be a Delzant polyhedron. Denote by $\partial \Delta$ its boundary and by $\Delta^{\circ}$ its interior. Let $X_{\Delta}$ be the associated toric variety, recall that $\Delta^{\circ} \times T \subset X_{\Delta}$.

Given a tropical hypersurface $\Xi^{\infty} \subset M_{\R}$ and $\mathcal L^{\infty}$ a Lagrangian lift of $\Xi^{\infty}$. Define
\[ \Xi = \Delta \cap \Xi^{\infty}. \]
Then the lift $\mathcal L$ of $\Xi$ inside $X_{\Delta}$ is formed by taking the closure of $\mathcal L^{\infty} \cap (\Delta^{\circ} \times T)$ inside $X_{\Delta}$.  The question is: how nice is $\mathcal L$? When is it a smooth submanifold, with or without boundary? In the case of curves and given certain conditions on how $\Xi^{\infty}$ intersects $\partial \Delta$, it turns out that $\mathcal L$ is automatically a smooth manifold with boundary or, in some nicer cases, a smooth manifold without boundary. Some times $\mathcal L$ is a non-orientable surface (see \cite{mikh_trop_to_lag} or \S 6.2 of \cite{lag_pants}). 

It the case of tropical surfaces, it is not hard to find conditions such that $\mathcal L$ is a smooth manifold with boundary and corners, but it is not obvious how to obtain smooth manifolds without boundary. The problem is understanding the interaction of $\mathcal L$ with the toric boundary of $X_{\Delta}$. 

\begin{ex}\label{wave_front_trop}
This example generalizes Examples 6.2 and 6.3 of \cite{lag_pants} and Mikhalkin's tropical wave fronts (Example 3.3 of \cite{mikh_trop_to_lag}). The polyhedron $\Delta$ is given by an intersection of half spaces
\[ \Delta = \bigcap_{\delta} \left \{ \inn{d_{\delta}}{x} \geq t_{\delta} \right \} \]
where the boundary of each half space contains a two dimensional face $\delta$ of $\Delta$ such that $d_{\delta}$ is its inward integral primitive normal direction. Consider the smaller polyhedron  inside $\Delta$ given by
\[ \Delta_{\epsilon} = \bigcap_{\delta} \left \{ \inn{d_{\delta}}{x} \geq t_{\delta}+ \epsilon \right \} \]
for some small $\epsilon$. For each edge $\tau$ of $\Delta$, let $\tau_{\epsilon}$ be the corresponding edge of $\Delta_{\epsilon}$. Consider the two dimensional polyhedron
 \[ \ell_{\tau} = \conv (\tau \cup \tau_{\epsilon}). \]
Define the tropical surface
\[ \Xi = \partial \Delta_{\epsilon} \cup \left(  \bigcup_{\tau} \ell_{\tau} \right), \]
where the union runs over all edges of $\Delta$.  See Figure \ref{wave_front} for a picture of $\Xi$ in the case $\Delta$ is a standard simplex. It can be easily seen that since $\Delta$ is Delzant, $\Xi$ is smooth and its boundary coincides with the union of the edges of $\Delta$. Each $\ell_{\tau}$ has the following property. Given its tangent space $M^{\ell_{\tau}}_{\R}$, choose a basis $\{v_1, v_2 \}$ of the lattice $M^{\ell_{\tau}}$ such that $v_1$ is tangent to $\tau$. Then we have that for each two dimensional face $\delta$ containing $\tau$ 
\[ \inn{d_{\delta}}{v_2} = 1. \]
 This is analogous to what we called property (P) in \S 6.1 of \cite{lag_pants} or in Mikhalkin's terminology $\ell_{\tau}$ is bisectrice (see Definition 1.12 of \cite{mikh_trop_to_lag}). In particular each vertex of $\Delta$ is the endpoint of an edge of $\Xi$, all of whose adjacent two dimensional polyhedra are bisectrices.  We ask whether one can construct a smooth Lagrangian lift $\mathcal L$. We believe this is true but we do not have a complete proof yet. The bisectrice property of the polyhedra $\ell_{\tau}$ make it possible to construct a lift which is smooth over interior points of the edges $\tau$. The difficulty lies in proving that the lift can be smoothed also over the vertices of $\Delta$. As suggested by Mikhalkin, it would be interesting to follow the dynamics of $\Delta_{\epsilon}$  beyond small values of $\epsilon$, such as described by Kalinin and Shkolnikov in \cite{kalinin_wave_dyn}. Can this dynamic be translated in a smooth family of Lagrangians?
\begin{figure}[!ht] 
\begin{center}
\includegraphics{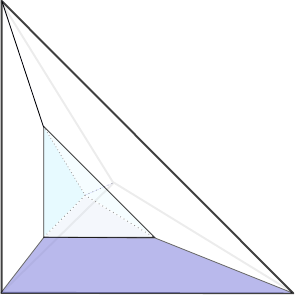}
\caption{The tropical hypersurface $\Xi$. For clarity, only one of the polyhedra $\ell_{\tau}$ is colored.} \label{wave_front}
\end{center}
\end{figure}
\end{ex}

\begin{ex} \label{monotone_ex}
As a limit case of the above example, let $\Delta$ be the polytope of $\PP^3$, i.e. the standard simplex in $\R^3$, and let $q \in \Delta$ be its barycenter. For every edge $\tau$ of $\Delta$ let 
  \[ \ell_{\tau} = \conv ( \tau \cup q ) \]
and define
\[ \Xi = \bigcup_{\tau} \ell_{\tau}. \]
Then $\Xi$ is a tropical hypersurface which, in a neighborhood of the vertex $q$, is as in Example \ref{singular_ex}. Therefore we can use the lift constructed there to find the lift $\mathcal L$ of $\Xi$ inside $\PP^3$. As in the previous example we have not yet proven that one can smooth the lift over the vertices of $\Delta$. We expect $\mathcal L$ to be homeomorphic to $S^1 \times S^2$. This example generalizes the monotone Example 6.5 of \cite{lag_pants}, so it should also be monotone.
\end{ex}

\begin{ex} This example in $\C^3$ generalizes Mikhalkin's examples \cite{mikh_trop_to_lag} of tropical curves representing non-orientable Lagrangian surfaces in $\C^2$. Let
\[ \Delta = (\R_{\geq 0})^3, \]
then $X_{\Delta} = \C^3$. Consider the points
\[ \begin{split}
            & Q_0=(0,0,0), \quad  Q_1 = (3,3,3),  \\
            & P_{1,1}=(4,3,3),\quad P_{2,1} = (7,2,2), \quad P_{3,1} =(12,0,0). 
   \end{split} \]
Let $P_{j,k}$ be the point obtained from $P_{j,1}$ by exchanging the first and the $k$-th coordinate. Define three dimensional polytopes
\[ \begin{split}
          & \Sigma_1= \conv \{Q_1, P_{1,1}, P_{1,2}, P_{1,3} \}, \\
          & \Sigma_2 =  \conv \{P_{1,1}, P_{1,2}, P_{1,3}, P_{2,1}, P_{2,2}, P_{2,3}. \} 
   \end{split}
\]
Clearly $\Sigma_1$ is a standard simplex and $\Sigma_2$ is a truncated simplex.
Define the two dimensional polytopes
\[ \begin{split}
          & \beta_1 = \conv \{ Q_0, \, Q_1, \, P_{1,1}, \, P_{2,1}, \, P_{3,1}  \}  \\
          & \gamma_1 = \conv \{  P_{2,2}, \, P_{3,2}, \, P_{2,3}, \, P_{3,3} \}. 
    \end{split}       
\]
Let $\beta_k$ and $\gamma_k$ be obtained from $\beta_1$ and $\gamma_1$ by the symmetry exchanging the first and $k$-th coordinate. Now let 
\[ \Xi = \partial \Sigma_1 \cup \partial \Sigma_2 \cup \left( \bigcup_{k} \beta_k\right) \cup \left( \bigcup_{k} \gamma_k\right). \]
It can be checked that this is a smooth tropical hypersurface in $\Delta$, see Figure \ref{non_orientable_exmpl}.

\begin{figure}[!ht] 
\begin{center}
\includegraphics{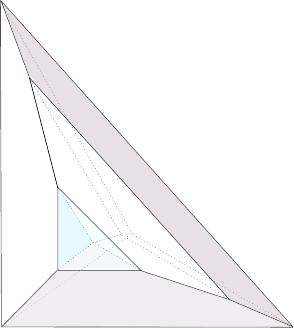}
\caption{We have only colored $\partial \Sigma_1$, $\beta_1$ and $\gamma_2$} \label{non_orientable_exmpl}
\end{center}
\end{figure}

The two dimensional polyhedra which hit the boundary of $\Delta$ are the $\beta_k$'s and $\gamma_k$'s. We have that $\beta_k$ has one edge lying on a coordinate axis of $\Delta$ and it is a bisectrice (see previous example). The $\gamma_k$'s have an edge lying on a coordinate plane $\delta$ of $\Delta$. They have the property that if $\{ v_1, v_2 \}$ is a basis of the lattice $M^{\gamma_k}$ such that $v_1$ is tangent to $\delta$, then  
\begin{equation} \label{even}
       \inn{d_{\delta}}{v_2} = 2,
\end{equation}
where $d_{\delta}$ is the inward, primitive integral normal direction of $\delta$. This is analogous to the condition satisfied by the edges of tropical curves representing non-orientable surfaces (see \S 3.4 of \cite{mikh_trop_to_lag}).  

Therefore, it seems reasonable to expect that such a tropical hypersurface admits a smooth (non-orientable) Lagrangian lift $\mathcal L$ in $\C^3$. Indeed the above properties guarantee that $\mathcal L$ can be constructed so that it is smooth everywhere except over the points $Q_0$, $P_{3,1}$, $P_{3,2}$, $P_{3,3}$ which are the points where an edge of $\Xi$ hits the boundary of $\Delta$. While the point $Q_0$ is of the type already present in Example \ref{wave_front_trop} (i.e. the vertices of $\Delta$), the points $P_{3,k}$ have a different nature. They are the end points of an edge of $\Xi$ which is adjacent to a bisectrice (i.e. $\beta_k$) and two polyhedra satisfying \eqref{even} (i.e. two of the $\gamma_j$'s). 
\end{ex}

\subsection{Lagrangian submanifolds of Calabi-Yau manifolds} \label{CY's}
An interesting generalization of the above constructions would be to find Lagrangian submanifolds inside the symplectic Calabi-Yau manifolds with a Lagrangian torus fibration constructed in \cite{CB-M}, based on Gross's topological torus fibrations \cite{TMS}. Indeed, given a symplectic manifold $(X, \omega)$ with a Lagrangian torus fibration $f: X \rightarrow B$, let $B_0$ be the locus in $B$ of smooth fibres and let $D = B - B_0$ be the discriminant locus. Action coordinates on $B$ define an integral affine structure on $B_0$, i.e. an atlas with change of coordinate maps inside $\Gl_{n}(\Z) \ltimes \R^n$. Therefore $B_0$ is a natural ambient space where tropical subvarieties can be defined. If we also have a Lagrangian section $\sigma: B \rightarrow X$, then the Arnold-Liouville theorem tells us that $X_0 = f^{-1}(B_0)$ is symplectomorphic to $T^*B_0 / \Lambda$, where $\Lambda$ is a lattice of maximal rank in $T^*B_0$. Therefore, locally $X_0$ is like $M_{\R} \times N_{\R}/N$. Hence we can define the Lagrangian PL lift $\hat \Xi$ of a tropical hypersurface $\Xi$ in $B_0$. If $\dim_{\R} B_0 = 3$ then we can also find a smoothing $\mathcal L_0 \subset X_0$ of $\hat \Xi$. Suppose now that $\Xi$ is a tropical hypersurface which has boundary on the discriminant locus $D$. What is the closure $\mathcal L$ of $\mathcal L_0$? When is it a smooth manifold, without boundary? The Lagrangian $3$-torus fibrations  constructed in \cite{CB-M} have prescribed singular fibres modeled on those described \cite{TMS}. Indeed $D$ is a (thickening of a) $3$-valent graph, with two types of singular fibres over the vertices: positive and negative. We believe that it should not be hard to understand when the closure $\mathcal L$ of $\mathcal L_0$ is smooth. Indeed the examples  in  \cite{onHmstoric} of Lagrangian spheres were constructed using this idea.  The following examples are inside a symplectic Calabi-Yau homeomorphic to the quintic threefold in $\PP^4$. 

\begin{ex} \label{sphere_quintic} In \cite{TMS} and \cite{GHJ}, Gross describes a $3$-valent graph $D$ inside a $3$-sphere $B$ and an integral affine structure on $B_0 = B- D$ such that one can compactify $X_0 = T^*B_0 / \Lambda$ to a topological manifold $X$ by adding canonical singular fibres over $D$. Gross proves that $X$ is homeomorphic to a smooth quintic threefold in $\PP^4$. In \cite{CB-M} it is shown that one  can find a symplectic form on $X$ (extending the natural one on $X_0$) so that the fibration is Lagrangian. The $3$-sphere $B$ is identified with the boundary $\partial \Delta$ of the standard simplex in $\R^3$. Let $\Delta^{[2]}$ be the two skeleton of $\partial \Delta$, i.e. the union of two dimensional faces. Then $D \subset \Delta^{[2]}$ and $D$ divides $\Delta^{[2]}$ in $105$ connected components. Each of these components is a smooth tropical hypersurface with boundary on $D$. The components are divided into three different types which are pictured in Figure \ref{vertex_quintic}. 
\begin{figure}[!ht] 
\begin{center}
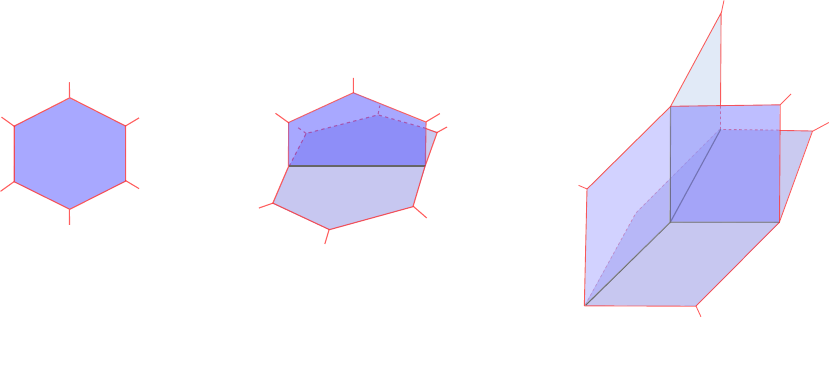
\caption{} \label{vertex_quintic}
\end{center}
\end{figure}
Type $(a)$ are contained in the interior of each $2$-face of $\Delta$ and there are $60$ of these ($6$ in each face). Type $(b)$ are defined along edges of $\Delta$ and there are $40$ of these ($4$ along each edge). Type (c) are defined around vertices of $\Delta$ and there are $5$ of these. In Example 4.10 of \cite{onHmstoric} it is shown how to construct Lagrangian spheres over type (a) components. It should be possible, combining the methods of this article with a detailed analysis of the interaction of $\mathcal L_0$ with the singular fibres, to construct smooth Lagrangian submanifolds (spheres?) over components of type $(b)$ and $(c)$. Similarly we should be able to construct Lagrangian submanifolds over tropical curves with boundary on $D$ using the constructions in \cite{mikh_trop_to_lag} and \cite{lag_pants}, together with a similar analysis of interactions with the singular fibres. For an explicit construction of Lagrangian lifts of tropical curves in the mirror of the quintic, using toric degenerations, see also \cite{mak_ruddat_tropLag}. 
\end{ex}


\begin{thebibliography}{10}

\bibitem{diri_branes}
Paul~S. Aspinwall, Tom Bridgeland, Alastair Craw, Michael~R. Douglas, Mark
  Gross, Anton Kapustin, Gregory~W. Moore, Graeme Segal, Bal{\'a}zs Szendroi,
  and P.~M.~H. Wilson.
\newblock {\em Dirichlet branes and mirror symmetry}, volume~4 of {\em Clay
  Mathematics Monographs}.
\newblock American Mathematical Society, Providence, RI, 2009.

\bibitem{CB-M}
R.~Castano-Bernard and D.~Matessi.
\newblock {Lagrangian 3-torus fibrations}.
\newblock {\em J. of Differential Geom.}, 81(3):483--573, 2009.
\newblock \href{http://arxiv.org/abs/math/0611139}{arXiv:math/0611139}.

\bibitem{TMS}
M.~Gross.
\newblock {Topological Mirror Symmetry}.
\newblock {\em Invent. Math.}, 144:75--137, 2001.

\bibitem{GHJ}
M.~Gross, D.~Huybrechts, and D.~Joyce.
\newblock {\em ``Calabi-Yau manifolds and related geometries'' Lecture notes at
  a summer school in Nordfjordeid, Norway, June 2001}.
\newblock Springer Verlag, 2003.

\bibitem{onHmstoric}
M.~Gross and D.~Matessi.
\newblock On homological mirror symmetry of toric {C}alabi-{Y}au three-folds.
\newblock \href{http://arxiv.org/abs/1503.03816}{arXiv:1503.03816}.

\bibitem{G-Siebert2003}
M.~Gross and B.~Siebert.
\newblock {Mirror Symmetry via Logarithmic degeneration data I}.
\newblock {\em J. Differential Geom.}, 72(2):169--338, 2006.
\newblock \href{http://arxiv.org/abs/math.AG/0309070}{math.AG/0309070}.

\bibitem{kalinin_wave_dyn}
Nikita Kalinin and Mikhail Shkolnikov.
\newblock Introduction to tropical series and wave dynamic on them.
\newblock \href{https://arxiv.org/abs/1706.03062}{arXiv:1706.03062}.

\bibitem{kerr_zha_phase_trop}
Gabriel Kerr and Ilia Zharkov.
\newblock Phase tropical hypersurfaces.
\newblock \href{https://arxiv.org/abs/1610.05290}{arXiv:1610.05290}.

\bibitem{mak_ruddat_tropLag}
Cheuk~Yu Mak and Helge Ruddat.
\newblock Tropically constructed {L}agrangians in mirror quintic 3-folds.
\newblock In preparation.

\bibitem{lag_pants}
Diego Matessi.
\newblock Lagrangian pairs of pants.
\newblock \href{https://arxiv.org/abs/1802.02993}{arXiv:1802.02993}.

\bibitem{mikh_trop_to_lag}
G.~Mikhalkin.
\newblock Examples of tropical-to-{L}agrangian correspondence.
\newblock \href{https://arxiv.org/abs/1802.06473}{arXiv:1802.06473}.

\bibitem{mikh_pants}
G.~Mikhalkin.
\newblock Decomposition into pairs-of-pants for complex hypersurfaces.
\newblock {\em Topology \textbf{43}}, pages 1035 -- 1065, 2004.

\bibitem{nisse_sottile_nonAch_coam}
Mounir Nisse and Frank Sottile.
\newblock Non-{A}rchimedean coamoebae.
\newblock In {\em Tropical and non-{A}rchimedean geometry}, volume 605 of {\em
  Contemp. Math.}, pages 73--91. Amer. Math. Soc., Providence, RI, 2013.
\newblock \href{https://arxiv.org/abs/1110.1033}{arXiv:1110.1033}.

\end{thebibliography}

\vspace{1cm}
\begin{flushleft}
Diego MATESSI \\
Dipartimento di Matematica \\
Universit\`a degli Studi di Milano \\
Via Saldini 50 \\
I-20133 Milano, Italy \\
E-mail address: \email{diego.matessi@unimi.it}
\end{flushleft}

\end{document}